\begin{document}
\title[Disorder chaos and multiple valleys]{Disorder chaos and multiple valleys in spin glasses}
\author{Sourav Chatterjee}
\date{September 5, 2009}
\thanks{The author's research was partially supported by NSF grant DMS-0707054 and a Sloan Research Fellowship}
\address{\newline367 Evans Hall \#3860\newline
Department of Statistics\newline
University of California at Berkeley\newline
Berkeley, CA 94720-3860\newline
{\it E-mail: \tt sourav@stat.berkeley.edu}\newline 
{\it URL: \tt http://www.stat.berkeley.edu/$\sim$sourav}}
\keywords{Sherrington-Kirkpatrick model, Edwards-Anderson model, spin glass, chaos, disorder, multiple valleys, concentration of measure, low temperature phase, Gaussian field}
\subjclass[2000]{60K35, 60G15, 82B44, 60G60, 60G70}

\begin{abstract}
We prove that the Sherrington-Kirkpatrick model of spin glasses is chaotic under small perturbations of the couplings at any temperature in the absence of an external field. The result is proved for two kinds of perturbations: (a) distorting the couplings via Ornstein-Uhlenbeck flows,  and (b) replacing a small fraction of the couplings by independent copies. We further prove that the S-K model exhibits multiple valleys in its energy landscape, in the weak sense that there are many states with near-minimal energy that are mutually nearly orthogonal. We show that the variance of the free energy of the S-K model is unusually small at any temperature. (By `unusually small' we mean that it is much smaller than the number of sites; in other words, it beats the classical  Gaussian concentration inequality, a phenomenon that we call `superconcentration'.) We prove that the bond overlap in the Edwards-Anderson model of spin glasses is {\it not} chaotic under perturbations of the couplings, even large perturbations. Lastly, we obtain sharp lower bounds on the variance of the free energy in the E-A model on any bounded degree graph, generalizing a result of Wehr and Aizenman and establishing the absence of superconcentration in this class of models. Our techniques apply for the $p$-spin models and the Random Field Ising Model as well, although we do not work out the details in these cases. 
\end{abstract}
\maketitle

\tableofcontents

\section{Introduction}\label{intro}
Spin glasses are magnetic materials with strange properties that distinguish them from ordinary ferromagnets. In statistical physics, the study of spin glasses originated with the works of Edwards and Anderson~\cite{edwardsanderson75} and Sherrington and Kirkpatrick \cite{sk75} in 1975. In the following decade, the theoretical study of spin glasses led to the invention of deep and powerful new  methods in physics, most notably  Parisi's broken replica method. We refer to  \cite{mezardetal87} for a survey of the physics literature.

However, these physical breakthroughs were far beyond the reach of rigorous proof at the time, and much of it remains so till date. The rigorous analysis of the Sherrington-Kirkpatrick model began with the works of Aizenman, Lebowitz and Ruelle \cite{alr87} and Fr\"ohlich and Zegarli\'nski \cite{frohlichzegarlinski87} in the late eighties; the field remained stagnant for a while, interspersed with a few nice papers occasionally (e.g.~\cite{cometsneveu95}, \cite{shcherbina97}). The deepest mysteries of the broken replica analysis of the S-K model remained mathematically intractable for many more years until the path-breaking contributions of Guerra, Toninelli, Talagrand, Panchenko and others in the last ten years (see e.g.\  \cite{arguinaizenman09}, \cite{guerra02}, \cite{ guerra03}, \cite{ panchenkotalagrand07}, \cite{ghirlandaguerra98}, \cite{talagrand03}, \cite{talagrand06}). Arguably the most notable achievement in this period was  Talagrand's proof of the Parisi formula \cite{talagrand06}.

However, in spite of all this remarkable progress, our understanding of these complicated mathematical objects is still shrouded in mystery, and many conjectures remain unresolved. In this article we attempt to give a mathematical foundation to some aspects of spin glasses  that have been well-known in the physics community for a long time but never before penetrated by rigorous mathematics. Let us now embark on a description of our main results. Further references and connections with the literature will be given at the appropriate places along the way.

\subsection{Weak multiple valleys in the S-K model}
Consider the following simple-looking probabilistic question: Suppose $(g_{ij})_{1\le i,j\le N}$ are i.i.d.\ standard Gaussian random variables, and we define, for each $\bos \in \{-1,1\}^N$, the quantity 
\begin{equation}\label{xn}
X_N(\bos) := \sum_{1\le i,j\le N} g_{ij}\sigma_i \sigma_j. 
\end{equation}
Then is it true that with high probability, there is a large subset $A$ of $\{-1,1\}^N$ such that 
\begin{equation}\label{xmax}
X_N(\bos) \simeq \max_{\bos'\in\{-1,1\}^N} X_N(\bos') \ \ \text{for each $\bos \in A$,}
\end{equation}
and any two distinct elements $\bos^1, \bos^2$ of $A$ are nearly orthogonal, in the sense that
\begin{equation}\label{r12}
R_{\bos^1,\bos^2} = R_{1,2} := \frac{\sum_{i=1}^N \sigma_i^1 \sigma_i^2}{N} \simeq 0? 
\end{equation}
(In the spin glass literature, the quantity $R_{1,2}$ is called the `overlap' between the `configurations' $\bos^1$ and $\bos^2$.) To realize the non-triviality of the question, consider a slightly different Gaussian field $Y_N$ on $\{-1,1\}^N$,  defined as
\[
Y_N(\bos) := \sum_{i=1}^N g_i \sigma_i,
\]
where $g_1,\ldots, g_N$ are i.i.d.\ standard Gaussian random variables. Then clearly, $Y_N$ is maximized at $\hat{\bos}$, where $\hat{\sigma}_i = \mathrm{sign}(Y_i)$. Note that for any $\bos$,
\[
Y_N(\bos) = \sum_{i: \ \sigma_i = \hat{\sigma}_i } |Y_i| - \sum_{i: \ \sigma_i = - \hat{\sigma}_i}|Y_i|. 
\]
It is not difficult to argue from here that if $\bos$ is another configuration that is near-maximal for $Y_N$, then $\bos$ must agree with $\hat{\bos}$ at nearly all coordinates. Thus, the field $Y_N$ does not satisfy the `multiple peaks picture' that we are investigating about $X_N$. This is true in spite of the fact that $Y_N(\bos)$ and $Y_N(\bos')$ are approximately independent for almost all pairs $(\bos, \bos')$. 

We have the following result about the existence of multiple peaks in the field $X_N$. It says that with high probability, there is a large collection $A$ of configurations satisfying \eqref{xmax} and \eqref{r12}, that is, $R_{\bos^1,\bos^2}\simeq 0$ for any two distinct $\bos^1,\bos^2\in A$, and $X_N(\bos)\simeq \max_{\bos'}X_N(\bos')$ for each $\bos\in A$. 
\begin{thm}\label{multisk}
Let $X_N$ be the field defined in \eqref{xn}, and define the overlap $R_{\bos^1,\bos^2}$ between configurations $\bos^1, \bos^2$ by the formula \eqref{r12}. Let 
\[
M_N := \max_{\bos} X_N(\bos).
\]
Then there are constants $r_N \ra \infty$, $\gamma_N \ra 0$, $\epsilon_N \ra 0$, and $\delta_N\ra 0$ such that with probability at least $1-\gamma_N$, there is a set $A\subseteq\{-1,1\}^N$ satisfying
\begin{enumerate}
\item[(a)] $|A| \ge r_N$,
\item[(b)] $R_{\bos^1,\bos^2}^2\le \epsilon_N$ for all $\bos^1, \bos^2\in A$, $\bos^1\ne \bos^2$, and 
\item[(c)] $X_N(\bos) \ge (1-\delta_N)M_N$ for all $\bos\in A$. 
\end{enumerate}
Quantitatively, we can take $r_N = (\log N)^{1/8}$, $\delta_N = (\log N)^{-1/8}$, $\epsilon_N = e^{-(\log N)^{1/8}}$ and $\gamma_N = C(\log N)^{-1/12}$, where $C$ is an absolute constant. However these are not necessarily the best choices. 
\end{thm}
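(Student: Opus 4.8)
The plan is to build the set $A$ from the near-maximizers that appear along an Ornstein--Uhlenbeck flow of the couplings, sampled at a well-chosen family of times. Let $(g_{ij}(t))_{t\ge 0}$ be the stationary O--U process with $g_{ij}(0)=g_{ij}$, so each $g_{ij}(t)$ is standard normal with $\mathbb{E}[g_{ij}(s)g_{ij}(t)]=e^{-|s-t|}$; set $X_N^t(\bos):=\sum_{i,j}g_{ij}(t)\sigma_i\sigma_j$, $M_N^t:=\max_\bos X_N^t(\bos)$, and write $\bos^{*}(h)$ for a maximizer of $\bos\mapsto\sum_{i,j}h_{ij}\sigma_i\sigma_j$, so that $\bos^{*,t}:=\bos^{*}(g(t))$ maximizes $X_N^t$. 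Fix a window length $a_N\ra 0$, put $t_k:=(k-1)a_N/r_N$ for $k=1,\dots,r_N$, and take $A:=\{\bos^{*,t_1},\dots,\bos^{*,t_{r_N}}\}$. Two things must be checked: that each $\bos^{*,t_k}$ is near-maximal for the \emph{original} field $X_N^0$, which forces $a_N$ to be small; and that the members of $A$ are pairwise nearly orthogonal, which forces the consecutive gaps $a_N/r_N$ to be not too small. Balancing these competing constraints is the entire content of the theorem.

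Condition (c) is the easy part. It is standard (e.g.\ from the zero-temperature Parisi formula, or a Sudakov-type lower bound together with a first-moment upper bound) that $\mathbb{E}M_N\asymp N^{3/2}$, and Gaussian concentration gives $|M_N^t-\mathbb{E}M_N|=O_{\mathbb{P}}(N)$ for each $t$. Writing $D^t:=X_N^t-X_N^0$, each $D^t(\bos)$ is centered Gaussian with variance $2N^2(1-e^{-t})$, so a Borell--TIS bound plus a union bound over $\{-1,1\}^N$ give $\max_\bos|D^t(\bos)|=O_{\mathbb{P}}(\sqrt t\,N^{3/2})$. Hence, uniformly over $t\le a_N$, $X_N^0(\bos^{*,t})\ge M_N^t-\max_\bos|D^t(\bos)|\ge M_N^0-2\max_\bos|D^t(\bos)|\ge\bigl(1-O_{\mathbb{P}}(\sqrt t)\bigr)M_N$, so $a_N$ of order $\delta_N^2$ yields (c). (Time-reversibility of the O--U flow gives the sharper identity $\mathbb{E}[X_N^0(\bos^{*,t})]=e^{-t}\mathbb{E}M_N$, suggesting $a_N\asymp\delta_N$ should already do; the crude bound suffices.)

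The heart is condition (b), and the key tool is the Gaussian interpolation identity: for $f\in L^2$ and $g_\rho:=\rho g+\sqrt{1-\rho^2}\,g'$ with $g,g'$ independent standard Gaussians, $\mathrm{Var}(f)=\int_0^1\mathbb{E}[\nabla f(g)\cdot\nabla f(g_\rho)]\,d\rho$. With $f=M_N$ and the envelope identity $\partial M_N/\partial g_{ij}=\sigma_i^{*}\sigma_j^{*}$ (valid a.e., as $M_N$ is a maximum of linear functions), $\nabla M_N(g)\cdot\nabla M_N(g_\rho)=N^2R_{\bos^{*}(g),\bos^{*}(g_\rho)}^2$, hence
\[
\mathrm{Var}(M_N)=N^2\int_0^1\mathbb{E}\bigl[R_{\bos^{*}(g),\bos^{*}(g_\rho)}^2\bigr]\,d\rho .
\]
Since $\rho\mapsto\mathbb{E}[M_N(g)M_N(g_\rho)]=\sum_{k\ge 0}\|P_kM_N\|^2\rho^k$ has nonnegative coefficients (Hermite expansion), its derivative $\rho\mapsto N^2\mathbb{E}[R_{\bos^{*}(g),\bos^{*}(g_\rho)}^2]$ is nondecreasing, which upgrades the identity to the pointwise bound $\mathbb{E}[R_{\bos^{*}(g),\bos^{*}(g_\rho)}^2]\le\mathrm{Var}(M_N)/(N^2(1-\rho))$. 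Invoking superconcentration --- the bound $\mathrm{Var}(M_N)=o(N^2)$, proved by the hypercontractivity/spectral method --- makes this small. By stationarity, $(g(t_j),g(t_k))\stackrel{d}{=}(g,g_\rho)$ with $\rho=e^{-|t_j-t_k|}$, so $R_{\bos^{*,t_j},\bos^{*,t_k}}\stackrel{d}{=}R_{\bos^{*}(g),\bos^{*}(g_\rho)}$; as every gap satisfies $|t_j-t_k|\ge a_N/r_N=:\eta$, we get $\mathbb{E}[R_{\bos^{*,t_j},\bos^{*,t_k}}^2]\lesssim\mathrm{Var}(M_N)/(N^2\eta)$ for all pairs, and a union bound over the $\binom{r_N}{2}$ of them gives (b) with failure probability $\lesssim r_N^3\,\mathrm{Var}(M_N)/(N^2 a_N\epsilon_N)$.

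The main obstacle is reconciling the parameters. The Markov estimate just described only yields near-orthogonality at a level $\epsilon_N$ polynomial in $1/\log N$, since the available superconcentration bound is only $\mathrm{Var}(M_N)=O(N^2/\log N)$, whereas the statement asks for $\epsilon_N$ exponentially small in $(\log N)^{1/8}$. To reach that, the Markov step must be replaced by a genuine tail bound on the overlap of two near-maximizers: conditioning on $g(t_j)$ (hence on $\bos^{*,t_j}=:\bos^1$) and writing $X_N^{t_k}=\rho X_N^{t_j}+\sqrt{1-\rho^2}\,W$ with $W$ a fresh field, one bounds $\mathbb{P}(R_{\bos^1,\bos^{*,t_k}}^2>\epsilon_N)$ by the probability that the maximum of $X_N^{t_k}$ over $\{\bos:R_{\bos^1,\bos}^2>\epsilon_N\}$ exceeds $M_N^{t_k}$; an entropy count shows this restricted maximum has mean smaller by $\asymp\epsilon_N N^{3/2}$, and Gaussian concentration (each relevant fluctuation being $O(N)$) then forces the probability down to $e^{-c\epsilon_N^2N}$, which is ample since $\epsilon_N^2N\to\infty$. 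The delicate points in executing this are controlling the restricted maxima and handling the strong correlation between the two disorders when $\rho$ is close to $1$; the explicit $r_N,\delta_N,\epsilon_N,\gamma_N$ then emerge from optimizing the resulting inequalities and are, as noted, far from optimal. Finally, a secondary nuisance --- the a.e.\ differentiability of $M_N$ and the validity of the interpolation identity for this non-smooth $f$ --- is handled by mollification, or by running the whole argument with $M_N$ replaced by the free energy $\tfrac1\beta\log\sum_\bos e^{\beta X_N^t(\bos)}$ at inverse temperature $\beta=\beta_N\ra\infty$ and letting $\beta_N\ra\infty$ at the end.
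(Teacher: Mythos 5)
Your proposal tries to run the argument directly at $\beta=\infty$: you take $A$ to be the actual maximizers $\bos^{*,t_k}$ of the flowed fields and aim to control their pairwise overlaps via a variance identity for $M_N$ combined with superconcentration of $M_N$. This is precisely what the paper \emph{cannot} do, and the gap is not a ``delicate point'' but the crux. You invoke $\var(M_N)=o(N^2)$ ``proved by the hypercontractivity/spectral method,'' but no such bound is established in the paper or was available; Theorem~\ref{superconc} only controls $\var F_N(\beta)$ with a factor $\log(2+C\beta)$ that renders it useless as $\beta\to\infty$, and the paper lists superconcentration/chaos at $\beta=\infty$ explicitly among the unsolved problems. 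Your fallback tail argument --- conditioning on $g(t_j)$, writing $X_N^{t_k}=\rho X_N^{t_j}+\sqrt{1-\rho^2}\,W$, and claiming the restricted maximum over $\{R^2_{\bos^1,\bos}>\epsilon_N\}$ has mean smaller by $\asymp\epsilon_N N^{3/2}$ --- does not hold in the relevant regime: when $\rho$ is close to $1$ (small time gaps, which is where you need to be for condition~(c)), the correlated field is dominated by $\rho X_N^{t_j}$, whose maximizer $\bos^1$ lies \emph{inside} the high-overlap set, so the restricted maximum is if anything \emph{larger}, not smaller. Showing it is nevertheless beaten by the complementary region is exactly the statement of ground-state disorder chaos, which is open.

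The paper instead stays at a finite inverse temperature $\beta=\beta_N=e^{\sqrt{\log N}}$, draws $r$ configurations $I^{(k)}$ from Gibbs measures of slightly perturbed disorders (Theorem~\ref{multigauss}), and uses the finite-$\beta$ chaos bound of Theorem~\ref{chaoscont} (whose constants degrade only as $\log\beta$) to get near-orthogonality, together with elementary convexity bounds $\ee(M-X_I)\le 4\log|S|/\beta$ to get near-maximality. All error terms are then handled by Markov's inequality, which is why $\epsilon_N,\delta_N,\gamma_N$ only decay poly- or subpolynomially in $\log N$ --- the very weakness you were trying to improve upon. The one sentence in your proposal that does match the paper is your last-ditch suggestion to replace $M_N$ by the free energy at $\beta_N\to\infty$; that is not a regularity fix but the entire strategy, and once you adopt it you must abandon both the $\var(M_N)$ identity and the ground-state tail bound, neither of which survives the passage to finite $\beta$ in the form you need.
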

Let us now discuss the implication of this result in spin glass theory. The Sherrington-Kirkpatrick model of spin glasses, introduced in \cite{sk75}, is defined through the Hamiltonian (i.e.\ energy function)
\begin{equation}\label{skhamil}
H_N(\bos) := -\frac{1}{\sqrt{2N}} X_N(\bos) = -\frac{1}{\sqrt{2N}} \sum_{1\le i,j\le N} g_{ij} \sigma_i \sigma_j.
\end{equation}
The S-K model at inverse temperature $\beta\ge 0$ defines a probability measure  $G_N$ on $\{-1,1\}^N$ through the formula
\begin{equation}\label{gibbs}
G_N(\{\bos\}) := Z(\beta)^{-1}e^{-\beta H_N(\bos)},
\end{equation}
where $Z(\beta)$ is the normalizing constant. The measure $G_N$ is called the Gibbs measure. 

According to the folklore in the statistical physics community, the energy landscape of the S-K model has `multiple valleys'. Although no precise formulation is available, one way to view this is that there are many nearly orthogonal states with nearly minimal energy. For a physical discussion of the `many states' aspect of the S-K model, we refer to \cite{mezardetal87}, Chapter III. A~very interesting rigorous formulation was attempted by Talagrand~(see \cite{talagrand03}, Conjecture 2.2.23), but no theorems were proved.  Although our achievement is quite modest, and may not be satisfactory to the physicists because we do not prove that the approximate minimum energy states correspond to significantly large regions of the state space --- in fact, one may say that it is not what is meant by the physical term `multiple valleys' at all because an isolated low energy state does not necessarily  represent a valley --- it does seem that Theorem~\ref{multisk} is the first rigorous result about the multimodal geometry of the Sherrington-Kirkpatrick energy landscape. We may call it `multiple valleys in a weak sense'. 

Theorem \ref{multisk} can be generalized to the following Corollary, which shows that weak multiple valleys exist at `every energy level' and not only for the lowest energy.
\begin{cor}\label{multisk2}
Let all notation be the same as in Theorem \ref{multisk}. Fix a number $\alpha \in (0,1]$. Then for all sufficiently large $N$, with probability at least $1-2\gamma_N$ there exists a set $A\subseteq \{-1,1\}^N$ satisfying conditions \textup{(a)} and \textup{(b)} of Theorem~\ref{multisk}, such that $|X_N(\bos) - \alpha M_N|\le \delta_N|M_N|$ for all $\bos \in A$.
\end{cor}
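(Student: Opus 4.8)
The plan is to take the near-orthogonal near-maximal configurations guaranteed by Theorem~\ref{multisk} and `dilute' each of them by flipping an independent random fraction of its coordinates, chosen so that the energy is multiplied by $\alpha$ in expectation. Since the case $\alpha=1$ is exactly Theorem~\ref{multisk} (with $\gamma_N$ relaxed to $2\gamma_N$), assume $\alpha\in(0,1)$; put $p:=\tfrac12(1-\sqrt\alpha)\in(0,\tfrac12)$ and $q:=1-2p=\sqrt\alpha$. On the event of probability $\ge1-\gamma_N$ supplied by Theorem~\ref{multisk} fix $\bos^1,\dots,\bos^{r_N}\in\{-1,1\}^N$ with $R_{\bos^k,\bos^l}^2\le\epsilon_N$ for $k\ne l$ and $X_N(\bos^k)\ge(1-\delta_N)M_N$ for all $k$. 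Let $(\epsilon_i^k)_{1\le k\le r_N,\,1\le i\le N}$ be i.i.d.\ signs with $\mathbb{P}(\epsilon_i^k=-1)=p$, independent of $(g_{ij})$, set $\tau_i^k:=\epsilon_i^k\sigma_i^k$, and let $A:=\{\boldsymbol\tau^1,\dots,\boldsymbol\tau^{r_N}\}$ (discarding coincidences, which occur with vanishing probability). The observation that drives everything is that $X_N(\boldsymbol\tau^k)=\sum_{i,j}g_{ij}\sigma_i^k\sigma_j^k\epsilon_i^k\epsilon_j^k$ is a non-centred quadratic form in the i.i.d.\ signs $\epsilon_1^k,\dots,\epsilon_N^k$, so a one-line expansion (using $\mathbb{E}[\epsilon_i^k\epsilon_j^k\mid g]=q^2$ for $i\ne j$ and $=1$ for $i=j$) gives
\[
\mathbb{E}\bigl[X_N(\boldsymbol\tau^k)\mid g\bigr]=q^2X_N(\bos^k)+(1-q^2)\sum_i g_{ii}=\alpha X_N(\bos^k)+(1-\alpha)\sum_i g_{ii}.
\]

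Condition (a) is immediate. For (b), fix $k\ne l$: the overlap $R_{\boldsymbol\tau^k,\boldsymbol\tau^l}=\tfrac1N\sum_i(\sigma_i^k\sigma_i^l)(\epsilon_i^k\epsilon_i^l)$ is a normalised sum of the fixed signs $\sigma_i^k\sigma_i^l$ reweighted by the i.i.d.\ variables $\epsilon_i^k\epsilon_i^l$ of mean $q^2=\alpha$, so it has conditional mean $\alpha R_{\bos^k,\bos^l}$ and conditional variance $(1-\alpha^2)/N$. Chebyshev together with a union bound over the $\binom{r_N}{2}$ pairs (using $N\epsilon_N\ra\infty$) gives, with probability $1-o(\gamma_N)$, that $|R_{\boldsymbol\tau^k,\boldsymbol\tau^l}-\alpha R_{\bos^k,\bos^l}|\le\tfrac12(1-\alpha)\sqrt{\epsilon_N}$ for all $k\ne l$, whence $R_{\boldsymbol\tau^k,\boldsymbol\tau^l}^2\le\bigl(\tfrac{1+\alpha}{2}\bigr)^2\epsilon_N<\epsilon_N$.

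The energy condition requires $X_N(\boldsymbol\tau^k)$ to concentrate about its conditional mean on the scale $o(\delta_N M_N)$. Writing $\epsilon_i^k=q+\delta_i^k$ with $\delta_i^k$ centred, the fluctuation $X_N(\boldsymbol\tau^k)-\mathbb{E}[X_N(\boldsymbol\tau^k)\mid g]$ is a sum of a linear term $q\sum_i\bar a_i^k\delta_i^k$, with $\bar a_i^k:=\sum_j(g_{ij}+g_{ji})\sigma_i^k\sigma_j^k$, a diagonal term $\sum_i g_{ii}\bigl((\delta_i^k)^2-\mathbb{E}(\delta_i^k)^2\bigr)$, and an off-diagonal term $\sum_{i\ne j}g_{ij}\sigma_i^k\sigma_j^k\delta_i^k\delta_j^k$; a routine second-moment computation bounds the conditional variance of each by $O(N^2)$, using only $\sum_i(\bar a_i^k)^2=O(N^2)$, $\sum_{i\ne j}g_{ij}^2=O(N^2)$, $\sum_i g_{ii}^2=O(N)$ and $\sum_{i\ne j}g_{ij}g_{ji}=O(N)$ with high probability. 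Hence, by Chebyshev and a union bound over $k$, with probability $1-o(\gamma_N)$ one has $|X_N(\boldsymbol\tau^k)-\mathbb{E}[X_N(\boldsymbol\tau^k)\mid g]|\le N\log N$ for all $k$. Combining this with $(1-\delta_N)M_N\le X_N(\bos^k)\le M_N$, with $|\sum_i g_{ii}|\le\sqrt{N\log N}$, and with the standard high-probability lower bound $M_N\ge cN^{3/2}>0$ on the S-K ground state yields
\[
\bigl|X_N(\boldsymbol\tau^k)-\alpha M_N\bigr|\le\alpha\delta_N M_N+O(\sqrt{N\log N})+N\log N\le\delta_N M_N
\]
for all $k$ once $N$ is large (here $\alpha<1$ enters). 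Intersecting the Theorem~\ref{multisk} event with the two Chebyshev events and the few standard concentration events above gives total probability $\ge1-\gamma_N-o(\gamma_N)\ge1-2\gamma_N$, as required.

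The one step deserving real care is the variance bound $\operatorname{Var}(X_N(\boldsymbol\tau^k)\mid g)=O(N^2)$: it is exactly this smallness, compared with $(\delta_N M_N)^2\asymp N^3(\log N)^{-1/4}$, that forces the diluted energy into the prescribed window, and a crude bounded-differences estimate is useless here --- flipping one coordinate of $\boldsymbol\tau^k$ can change $X_N$ by order $N$, so bounded differences would give only $O(N^3)$ for the variance. The rest is routine tail estimation for Gaussians and Bernoullis. (A less slick alternative is to restrict $X_N$ to a sub-block of $\lceil\alpha^{2/3}N\rceil$ coordinates, invoke Theorem~\ref{multisk} on that block, and glue fresh random signs onto the complement; that route additionally requires the self-averaging of the S-K ground state energy at a rate beating $(\log N)^{-1/8}$.)
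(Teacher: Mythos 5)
Your proof is correct in its essentials, but it follows a genuinely different route from the paper. The paper proves the general Corollary~\ref{multigauss2} by \emph{keeping the set $A$ from Theorem~\ref{multigauss} fixed} and instead replacing the Gaussian field $\bbx$ by a mixed copy $\bby := \alpha\bbx + \sqrt{1-\alpha^2}\,\bbx'$, which has the same law as $\bbx$; conditions (a) and (b) then carry over verbatim because the covariance structure of $\bby$ is identical, and the only work is to bound the shift $|Y_i - \alpha M_Y|$ via Gaussian concentration of $M$, $M_Y$, and $\max_{j\in A}|X'_j|$. That argument is three lines and works for \emph{any} centered Gaussian field. Your argument instead keeps the field $X_N$ and \emph{replaces the set $A$}, diluting each near-optimal configuration by i.i.d.\ sign flips so that the energy and overlap both rescale by $\alpha$ in conditional mean. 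This buys a concrete, hands-on picture of where the intermediate-energy, near-orthogonal states live, but it is tied to the quadratic-form structure of the S-K Hamiltonian and requires the second-moment bookkeeping you carry out (variance of a Bernoulli chaos of order $2$, operator-norm control of $\sum_i (\bar a_i^k)^2$, etc.); it would not transfer to a general Gaussian field on $\{-1,1\}^N$.

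One step deserves a sentence more than you gave it: you need $\sum_i(\bar a_i^k)^2 = O(N^2)$ \emph{simultaneously} for $k=1,\dots,r_N$, and the $\bos^k$ are $g$-measurable, so a fixed-$\bos$ concentration bound plus a union over $r_N$ terms does not close the argument. The clean fix is the one you implicitly need anyway: write $\sum_i(\bar a_i^k)^2 = \|B\bos^k\|^2 \le \|B\|_{\mathrm{op}}^2\, N$ with $B_{ij} := (g_{ij}+g_{ji})\mathbf{1}_{i\ne j}$, and use that $\|B\|_{\mathrm{op}} = O(\sqrt{N})$ with high probability, which gives the $O(N^2)$ bound uniformly over \emph{all} $\bos\in\{-1,1\}^N$. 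With that sentence inserted, the rest of your estimates (Chebyshev for the overlap with conditional variance $(1-\alpha^2)/N$, the energy window $\alpha\delta_N M_N + O(\sqrt{N\log N}) + N\log N \le \delta_N M_N$ using $M_N \gtrsim N^{3/2}$, and the union over $\binom{r_N}{2}$ pairs) do give failure probability $\gamma_N + o(\gamma_N)\le 2\gamma_N$, and the $\alpha=1$ case is indeed immediate since $X_N(\bos)\le M_N$ always. Your closing remark about why a crude bounded-differences estimate fails is accurate and worth keeping.
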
 

The variables $(g_{ij})_{1\le i,j\le N}$ in the Hamiltonian $H_N$ are collectively called the `couplings' or the `disorder'. Our proof of Theorem \ref{multisk} is based on the chaotic nature of the S-K model under small perturbations of the couplings; this is discussed in the next subsection. The relation between chaos and multiple valleys follows from a general principle outlined in~\cite{chatterjee08}, although the proof in the present  paper is self-contained. 

\subsection{Disorder chaos in the S-K model}\label{chaossec}
Recall the Gibbs measure $G_N$ of the S-K model, defined in \eqref{gibbs}. Suppose $\bos^1$ and $\bos^2$ are two configurations drawn independently according to the measure $G_N$, and the overlap $R_{1,2}$ is defined as in \eqref{r12}. It is known that when $\beta < 1$, $R_{1,2}\simeq 0$ with high probability \cite{frohlichzegarlinski87, cometsneveu95, talagrand03}. However, it is also known that $R_{1,2}$ cannot be concentrated near zero for all $\beta$, because that  would give a contradiction to the existence of a phase transition as established in \cite{alr87}. In fact, it is believed that the limiting distribution of $R_{1,2}$ in the low temperature phase is given by the so-called `Parisi measure', a notion first made rigorous by Talagrand \cite{talagrand06, talagrand06a}.

Now suppose we choose $\bos^2$ not from the Gibbs measure $G_N$, but from a new Gibbs measure $G_N'$, based on a new Hamiltonian $H_N'$ which is obtained by applying a small perturbation to the Hamiltonian $H_N$. (We will make precise the notion of a small perturbation below.) Is it still true that $R_{1,2}$ has a non-degenerate limiting distribution at low temperatures? The conjecture of disorder chaos (i.e.\ chaos with respect to small fluctuations in the disorder $(g_{ij})_{1\le i,j\le N}$) states that indeed that is not the case: $R_{1,2}$ is concentrated near zero if $\bos^1$ is picked from the Gibbs measure and $\bos^2$ is picked from a perturbed Gibbs measure. This is supposed to be true at all temperatures. To the best of our knowledge, disorder chaos for the S-K model was first discussed in the widely cited paper of Bray and Moore \cite{braymoore87}; a related discussion appears in the earlier paper~\cite{mckayetal82}. The phenomenon of chaos itself was first conjectured by Fisher and Huse \cite{fisherhuse86} in the context of the Edwards-Anderson model, although the term was coined in \cite{braymoore87}. Again, to the best of our knowledge, nothing has been proved rigorously yet. For further references in the physics literature, let us refer to the recent paper~\cite{kk07}. 


Note that this idea of chaos should not be confused with temperature chaos (also discussed in \cite{braymoore87}), which says that spin glasses are chaotic with respect to small changes in the inverse temperature $\beta$. 

We shall consider two kinds of perturbation of the disorder. The first, what we call `discrete perturbation', is executed by replacing a randomly chosen small fraction of the couplings $(g_{ij})$ by independent copies. Here small fraction means a fraction $p$ that goes to zero as $N\ra \infty$. Discrete perturbation is the usual way to proceed in the noise-sensitivity literature (see e.g.\ \cite{bks99, bks03, schrammsteif05, mosseletal05, gps08}). In fact, it seems that the following result is intimately connected with noise-sensitivity, although we do not see any obvious way to use the standard noise-sensitivity techniques to derive it.
\begin{thm}\label{chaosdisc}
Consider the S-K model at inverse temperature $\beta$. Take any $N$ and $p\in [0,1]$. Suppose a randomly chosen fraction $p$ of the couplings $(g_{ij})$ are replaced by independent copies to give a perturbed Gibbs measure. Let $\bos^1$ be chosen from the original Gibbs measure and $\bos^2$ is chosen from the perturbed measure. Let the overlap $R_{1,2}$ be defined as in~\eqref{r12}. Then 
\[
\ee(R_{1,2}^2) \le \frac{C(1+\beta)}{p\log N},
\]
where $C$ is an absolute constant and the expectation is taken over all randomness. 
\end{thm}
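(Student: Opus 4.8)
The plan is to reduce the overlap bound to a \emph{superconcentration} estimate for the free energy $F := \log Z(\beta)$ and then to establish that estimate. Since $\bos^1$ and $\bos^2$ are, conditionally on the couplings, drawn from \emph{independent} Gibbs measures, and since $\partial F/\partial g_{kl} = \frac{\beta}{\sqrt{2N}}\langle\sigma_k\sigma_l\rangle$, a direct computation (in which the diagonal couplings produce an exact cancellation) gives the pointwise identity
\[
R_{1,2}^2 \;=\; \frac{2}{\beta^2 N}\,\nabla F(g)\cdot\widetilde\nabla F(\tilde g),
\qquad\text{hence}\qquad
\ee(R_{1,2}^2) \;=\; \frac{2}{\beta^2 N}\,\ee\!\left[\nabla F(g)\cdot\widetilde\nabla F(\tilde g)\right],
\]
where $g$ are the original couplings, $\tilde g$ the perturbed ones, and $\nabla,\widetilde\nabla$ the gradients in the respective variables. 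So the whole question becomes: how correlated can $\nabla F$ be with the perturbed $\widetilde\nabla F$?

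Next I would expand $F = \ee F + \sum_{k\ge1}F_k$ into Wiener chaoses. Because each coupling enters the Hamiltonian linearly, replacing an independent fraction $p$ of the couplings acts essentially diagonally on this decomposition, and one gets $\ee[\nabla F(g)\cdot\widetilde\nabla F(\tilde g)] = \sum_{k\ge1} k(1-p)^{k-1}\|F_k\|_2^2$ (up to genuinely lower‑order contributions from non‑multilinear monomials, which only turn this into an inequality ``$\le$''). Using the elementary bound $k(1-p)^{k-1}\le 1/p$ for all $k\ge1$, $p\in(0,1]$, together with $\sum_{k\ge1}\|F_k\|_2^2 = \mathrm{Var}(F)$, this yields
\[
\ee(R_{1,2}^2)\;\le\;\frac{2\,\mathrm{Var}(F)}{p\,\beta^2 N}.
\]
(Equivalently and more robustly: $p\mapsto\mathrm{Cov}(F(g),F(\tilde g_p))$ is convex and nonincreasing, running from $\mathrm{Var}(F)$ at $p=0$ to $0$ at $p=1$, $\ee(R_{1,2}^2)$ is $2/(\beta^2N)$ times minus its derivative at $p$, and convexity bounds this by $\mathrm{Var}(F)/p$ times that constant.) Thus Theorem~\ref{chaosdisc} follows once we know the superconcentration bound $\mathrm{Var}(\log Z) \le C(1+\beta)\,\beta^2 N/\log N$.

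That bound is the heart of the matter, and the step I expect to be the main obstacle. The Gaussian Poincar\'e inequality only gives the trivial $\mathrm{Var}(F)=\sum_{k\ge1}\|F_k\|^2 \le \sum_{k\ge1}k\|F_k\|^2 = \ee\|\nabla F\|^2 = \frac{\beta^2}{2N}\sum_{k,l}\ee\langle\sigma_k\sigma_l\rangle^2 \le \frac{\beta^2 N}{2}$, so all the content is in bounding the \emph{low-degree} chaos components $\|F_k\|^2$ for $k$ up to order $\log N$. Here I would exploit the gauge symmetry of the S‑K model: a $k$-th mixed derivative $\ee\big[\partial_{g_{i_1j_1}}\!\cdots\partial_{g_{i_kj_k}}\log Z\big]$ equals a constant times a joint Gibbs cumulant of the \emph{bounded} variables $\sigma_{i_t}\sigma_{j_t}$, hence is of order $(\beta/\sqrt N)^k$ times a combinatorial factor, and relabelling a single spin shows it \emph{vanishes} unless every site occurs an even number of times among $i_1,j_1,\dots,i_k,j_k$. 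The number of such ``even'' index patterns is $O(C^kN^k)$ rather than $N^{2k}$, which exactly cancels the $N^{-k}$ in the prefactor, so each $\|F_k\|^2$ is bounded by a quantity that grows with $k$ and $\beta$ but is \emph{independent of $N$}. Writing $\mathrm{Var}(F) = \sum_{2\le k\le K}\|F_k\|^2 + \sum_{k>K}\|F_k\|^2 \le (\text{a function of }K,\beta) + \frac{1}{K}\cdot\frac{\beta^2 N}{2}$ and optimizing over $K$ of order $\log N$ then gives the claimed estimate.

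The delicate point within the last step is getting the low-degree chaos bounds with the right dependence on $k$: crude joint‑cumulant bounds for bounded random variables grow like $k!$, which would only give an extra $\log\log N$ factor; obtaining the clean $1/\log N$ with just the stated $(1+\beta)$ dependence requires a sharper input — either a better bound on the relevant joint Gibbs cumulants using $|\sigma_i\sigma_j|\le1$ and the structure of S‑K correlations, or a hypercontractivity argument applied to the truncation $\ee F + \sum_{k\le K}F_k$, combined with tight bookkeeping of the combinatorics of even index patterns. A secondary technical point is the careful treatment of the non‑multilinear part of the chaos expansion in the reduction step, where one must verify it contributes only favorably.
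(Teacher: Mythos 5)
Your high-level plan — reduce to a superconcentration estimate for the free energy and then prove that estimate — is exactly the paper's structure (Theorem~\ref{chaosdisc} is deduced from Theorem~\ref{superconc}), and your opening identity
$\ee\langle R_{1,2}^2\rangle = \tfrac{2}{\beta^2 N}\,\ee[\nabla F(g)\cdot\widetilde\nabla F(\tilde g)]$ is correct. But both of the two main steps, as you have written them, contain genuine gaps.

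\textbf{The reduction step is wrong as stated.} The formula
$\ee[\nabla F(g)\cdot\widetilde\nabla F(\tilde g)] = \sum_k k(1-p)^{k-1}\|F_k\|^2$ holds for \emph{multilinear} (Boolean/Walsh) chaos, but not for Hermite chaos: under Bernoulli resampling, a Hermite monomial $\prod_i H_{\alpha_i}(g_i)$ decays like $(1-p)^{|\mathrm{supp}\,\alpha|}$, not $(1-p)^{|\alpha|}$, because $\ee[H_a(\tilde g_i)\mid g_i] = (1-p)H_a(g_i)$ for every $a\ge1$. Concretely, for $F = H_k(g_1)$ one finds $\ee[\nabla F\cdot\widetilde\nabla F] = k\cdot k!(1-p)$, while your formula gives $k\cdot k!(1-p)^{k-1}$; the true value is \emph{larger}, so your claimed ``$\le$'' on the non-multilinear corrections goes in the wrong direction. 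In this example $\mathrm{Var}(F)/p = k!/p$, and the desired inequality $k\,k!(1-p)\le k!/p$ fails once $k > \bigl(p(1-p)\bigr)^{-1}$. The parenthetical ``convexity'' version has the same flaw: $-\psi'(p) = \sum_\alpha c_\alpha^2\,\alpha!\,s_\alpha(1-p)^{s_\alpha-1}$ with $s_\alpha=|\mathrm{supp}\,\alpha|$, which is not $\ee[\nabla F\cdot\widetilde\nabla F]$ unless $F$ is multilinear (e.g.\ for $F=H_k(g_1)$, $-\psi'(p)=1$ but $\ee[\nabla F\cdot\widetilde\nabla F]=k\,k!(1-p)$). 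What rescues the conclusion for the S–K free energy is that its \emph{second} partials are small ($|\partial_{ij}^2 f|\le \beta N^{-3/2}$ versus $|\partial_{ij}f|\le N^{-1}$), so the function is close to multilinear at the relevant scale. The paper encodes this through discrete derivatives $\Delta_i f = f - f^{\{i\}}$: Lemma~\ref{disc1} gives the exact decomposition $\mathrm{Var}(f)=\tfrac1{2n}\sum_k T_k$, Lemma~\ref{disc2} shows by Jensen that $T_k$ is nonincreasing in $k$, whence $T_k\le 2n\mathrm{Var}(f)/(k+1)$ (Lemma~\ref{disc3}); Theorem~\ref{discgauss} then translates $\Delta_i f$ into $\partial_i f$ at the cost of an explicit error term $\tfrac32 n\delta\epsilon\gamma$ governed by the second-derivative bound $\epsilon$, which for S–K is $O(\beta N^{-1/2})$ and hence negligible. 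Without some such control of the departure from multilinearity, the ``$\mathrm{Var}(F)/p$'' bound simply does not follow.

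\textbf{The superconcentration step is a different route and left open.} You propose to bound the individual low-degree chaos norms $\|F_k\|^2$ directly via gauge symmetry and combinatorics of even index patterns, and you rightly flag that naive joint-cumulant bounds grow like $k!$ and would not close the argument with the stated constants. The paper never estimates individual chaos components. Instead, Theorem~\ref{superconc} is a consequence of the exact variance representation
$\mathrm{Var}\,F_N(\beta) = N\int_0^\infty e^{-t}\,\ee\langle\rho\rangle_{0,t}\,dt$ (Theorem~\ref{supergauss}) together with the \emph{continuous} disorder-chaos bound of Theorem~\ref{chaoscont}. That continuous chaos bound is itself obtained through the complete-monotonicity lemma (Lemma~\ref{complete}) and a Lata{\l}a-type high-temperature interpolation (Lemma~\ref{hightemp}), which extrapolate a large-$t$ estimate down to small $t$ — a mechanism with no counterpart in your proposal. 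If you want to pursue a chaos-coefficient route, the missing input you identify (hypercontractivity or sharper cumulant bookkeeping) is precisely what is not available, and it is exactly the obstacle the paper sidesteps by the complete-monotonicity extrapolation.
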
 
This theorem shows that the system is chaotic if the fraction $p$ goes to zero slower than $1/\log N$. The derivation of this result is based on the `superconcentration' property of the free energy in the S-K model that we present in the next subsection. 

The notion of perturbation in the above theorem, though natural, is not the only available notion. In fact, in the original physics papers (e.g.\ \cite{braymoore87}), a different manner of perturbation is proposed, which we call continuous perturbation. Here we replace $g_{ij}$ by $a g_{ij} + bg_{ij}'$, where $(g_{ij}')$ is another set of indepenent standard Gaussian random variables and  $a^2 + b^2 = 1$ so that the resultant couplings are again standard Gaussian. When $a\simeq 1$, we say that the perturbation is small. A convenient way to parametrize the perturbation is to set $a = e^{-t}$, where $t$ is a parameter that we call `time'. This nomenclature is natural, because perturbing the couplings up to time $t$ corresponds to running an Ornstein-Uhlenbeck flow at each coupling for  time $t$, with initial value $g_{ij}$. The following theorem says that the S-K model is chaotic under small continuous perturbations. 
\begin{thm}\label{chaoscont}
Consider the S-K model at inverse temperature $\beta$. Take any $t \ge 0$.  Suppose we continuously perturb the couplings up to time~$t$, as defined above. Let $\bos^1$ be chosen from the original Gibbs measure and $\bos^2$ be chosen from the perturbed measure. Let the overlap $R_{1,2}$ be defined as in~\eqref{r12}. Then there is an absolute constant $C$ such that for any positive integer $k$, 
\[
\ee(R_{1,2}^{2k}) \le (Ck)^k N^{- k \min\{1,\; t/C\log (1+C\beta)\}}.
\]
The expectation is taken over all randomness. 
\end{thm}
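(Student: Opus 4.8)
The plan is to recast $\ee(R_{1,2}^{2k})$ as an inner product involving the Ornstein--Uhlenbeck semigroup acting on the correlation functions of the unperturbed model, and then to exploit the spin-flip symmetry of the disorder to show that these correlation functions carry almost no mass on the low-order Wiener chaoses. Write $\mathbf g=(g_{ij})$ for the couplings, $\gamma$ for their law, $\langle\cdot\rangle_{\mathbf g}$ for the Gibbs average of the S-K model with disorder $\mathbf g$, and, for a multi-index $\mathbf i=(i_1,\dots,i_{2k})$, set $a_{\mathbf i}(\mathbf g):=\langle\sigma_{i_1}\cdots\sigma_{i_{2k}}\rangle_{\mathbf g}$; let $(P_s)_{s\ge0}$ be the O--U semigroup on $L^2(\gamma)$. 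Conditioning on $\mathbf g$, the perturbed couplings are $e^{-t}\mathbf g+\sqrt{1-e^{-2t}}\,\mathbf g'$, so integrating out $\mathbf g'$ turns $a_{\mathbf i}$ evaluated at the perturbed disorder into $P_t a_{\mathbf i}$ evaluated at $\mathbf g$; combined with $R_{1,2}^{2k}=N^{-2k}\sum_{\mathbf i}\prod_r\sigma_{i_r}^1\prod_r\sigma_{i_r}^2$ and independence of the two replicas given the disorders, this gives
\[
\ee\bigl(R_{1,2}^{2k}\bigr)=\frac1{N^{2k}}\sum_{\mathbf i}\int a_{\mathbf i}\,(P_t a_{\mathbf i})\,d\gamma=\frac1{N^{2k}}\sum_{m\ge0}e^{-mt}S_m,\qquad S_m:=\sum_{\mathbf i}\bigl\|(a_{\mathbf i})_m\bigr\|_{L^2(\gamma)}^2,
\]
where $(a_{\mathbf i})_m$ is the projection of $a_{\mathbf i}$ onto the $m$-th Wiener chaos; the trivial bound $\sum_m S_m=\sum_{\mathbf i}\|a_{\mathbf i}\|_2^2\le N^{2k}$ will also be used.

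The heart of the proof is the estimate $S_0\le(Ck)^kN^k$ and $S_m\le(Ck)^k(C\beta^2)^mN^k$ for $m\ge1$. For $S_0$ one uses $\|(a_{\mathbf i})_0\|_2^2=(\ee a_{\mathbf i})^2$, and $\ee\langle\sigma_{i_1}\cdots\sigma_{i_{2k}}\rangle=0$ unless every site occurs an even number of times in $\mathbf i$ (flipping the sign of the spin at a site of odd multiplicity is a measure-preserving operation on $\mathbf g$ that reverses the sign of $a_{\mathbf i}$), and there are at most $(Ck)^kN^k$ such multi-indices, each with $|a_{\mathbf i}|\le1$. For $m\ge1$, write the chaos coefficients by Gaussian integration by parts: $\|(a_{\mathbf i})_m\|_2^2=\sum_{|\alpha|=m}\tfrac1{\alpha!}(\ee[D^\alpha a_{\mathbf i}])^2$, where $D$ denotes differentiation in the couplings and $\alpha$ runs over multi-indices --- ``multigraphs on the sites''. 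Each $D^\alpha a_{\mathbf i}$ is an $m$-th order mixed cumulant of bounded Gibbs observables, hence equals $(\beta/\sqrt{2N})^m$ times a combinatorially bounded quantity; and by the same spin-flip symmetry $\ee[D^\alpha a_{\mathbf i}]=0$ unless, for every site, its total multiplicity in $\mathbf i$ together with the endpoints of the edges of $\alpha$ is even. The admissible $\alpha$ of size $m$ are therefore (up to cycles) forests of paths joining the odd-multiplicity sites of $\mathbf i$ in pairs, so for a fixed $\mathbf i$ with distinct coordinates they use at most $m-k$ sites not already named by $\mathbf i$; against the $(2N)^{-m}$ from the Hamiltonian and the $N^{2k}$ choices of $\mathbf i$ this leaves exactly $N^k$, while the $(C\beta^2)^m$ collects the $\beta^2/2$ per edge and the combinatorial constants (the $1/\alpha!$ weights being what keeps the sum over admissible shapes under control). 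For $k=1$ this estimate is, by matching the expansion above with the O--U flow of $\operatorname{Cov}(\log Z_\beta(\mathbf g),\log Z_\beta(\mathbf g^t))$, equivalent to the identity $\sum_{i,j}\|(a_{ij})_m\|_2^2=\tfrac{2N(m+1)}{\beta^2}\|(\log Z_\beta)_{m+1}\|_2^2$, i.e.\ to a bound on the Wiener chaos components of the free energy --- the ``superconcentration'' input.

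Finally I would assemble the estimate in two regimes. If $e^{-t}\le 1/(2C\beta^2)$ --- which holds in particular once $t\ge C'\log(1+C'\beta)$ --- the series $\sum_{m\ge1}(C\beta^2e^{-t})^m$ converges, giving $\ee(R_{1,2}^{2k})\le(C'k)^kN^{-k}$, the $\min\{1,\cdot\}=1$ case. Otherwise (so $\beta\gtrsim1$ and $t$ small) I would split the $m$-sum at a level $m_0\asymp k\log N/\log(C\beta^2)$, bounding the head $\sum_{m\le m_0}e^{-mt}S_m$ by $(Ck)^k N^k(m_0+1)(C\beta^2)^{m_0}$ and the tail $\sum_{m>m_0}e^{-mt}S_m$ by $e^{-m_0t}N^{2k}$; for this choice both pieces are at most $(Ck)^k(m_0+1)N^{-kt/(C'\log(1+C'\beta))}$, with the polylogarithmic $m_0+1$ handled by using the trivial bound $\ee(R_{1,2}^{2k})\le1$ for the (vanishingly small) range of $t$ where the exponent has not yet overtaken $\log\log N/\log N$, and absorbed into a slightly larger absolute constant otherwise. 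Combining the two cases gives the exponent $k\min\{1,t/(C\log(1+C\beta))\}$ and the prefactor $(Ck)^k$. The main obstacle is precisely the estimate on $S_m$: it rests on a clean bound for the iterated disorder-derivatives $D^\alpha a_{\mathbf i}$ of the correct order $(C\beta/\sqrt N)^m$, not picking up a factorial from the cumulant expansion or from the sum over admissible multigraph shapes --- and it is the $O(\beta^2)$ cost incurred at each chaos level that forces the $\log(1+\beta)$ in the final exponent.
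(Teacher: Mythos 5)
Your strategy is a genuinely different route from the paper's, though the two are related at a structural level. The paper (via Theorem \ref{main}) proves that $\phi_k(t):=\ee\av{\rho^k}_{0,t}$ is a completely monotone function of $t$ (Lemma \ref{complete}); Bernstein's theorem then yields an abstract representation $\phi_k(t)=\phi_k(0)\int e^{-xt}\,d\mu_k(x)$, and H\"older applied to this representation gives the interpolation $\phi_k(t)\le\phi_k(0)^{1-t/s}\phi_k(s)^{t/s}$. The high-temperature endpoint $\phi_k(s)$ for $\beta^2 e^{-s}$ small is controlled by a \emph{chain of differential inequalities} $f_r'\ge -4\beta^2 e^{-2t}f_{r+1}$ and an ``induction from infinity'' (Lemma \ref{hightemp}), where the iterated simplex integrals $T_m$ automatically carry the $1/m!$ weights. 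Your plan makes the Bernstein measure explicit as the Wiener--chaos spectral measure, $\phi_k(t)=N^{-2k}\sum_m e^{-mt}S_m$, and then attacks $S_m$ directly by Hermite/multigraph combinatorics. The first paragraph of your proposal (semigroup identity, chaos decomposition, spin-flip symmetry giving $S_0\le (Ck)^kN^k$, the trivial total-mass bound) is correct and clean.

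The gap, which you yourself flag, is the bound $S_m\le (Ck)^k(C\beta^2)^m N^k$ for $m\ge 1$: this is asserted, not proved, and the combinatorics is genuinely delicate. Differentiating $\av{\sigma_{i_1}\cdots\sigma_{i_{2k}}}$ $m$ times in the couplings produces a sum whose number of terms grows factorially in $m$ (each differentiation acts on every factor of the accumulating product of Gibbs brackets, Fa\`a di Bruno style), so the naive bound on $|\ee D^\alpha a_{\mathbf i}|$ is $\sim(\beta/\sqrt{2N})^m\cdot m!$, which after squaring and multiplying by $1/\alpha!$ (which is $1$ for distinct edges) leaves an unwanted $(m!)^2$. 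Controlling this requires exploiting cancellations in the cumulant-like structure, and it is not at all obvious this comes out to your claimed form --- in fact, comparing with what the paper's Lemma \ref{hightemp} implies, the sharp weight is closer to $S_m\lesssim N^k\beta^{2m}(2(k+m)-1)!!/m!$, which has an extra $\sim m^k$ relative to your claim when $m\gg k$; this still sums to $(Ck)^kN^{-k}$ against $e^{-ms}$ for $\beta^2 e^{-s}\le 1/4$ via the Gaussian-moment identity $\sum_m c^m(2(k+m)-1)!!/m!=\ee[Z^{2k}e^{cZ^2}]$, but the two-regime split in your final paragraph would then need some care to absorb the extra power of $m_0$.

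The paper's route is precisely engineered to avoid this. The H\"older interpolation reduces the problem to bounding $\phi_k(s)$ at a \emph{single} large $s$ (so no uniform-in-$m$ chaos estimate is needed), and the differential-inequality-chain telescope builds in the $1/l!$ factors analytically through the nested $\int_0^u\int_0^{u_1}\cdots$ integrals, so no explicit multigraph counting is ever required. A minor caution on your side comment: in the paper's logical order, the superconcentration bound (Theorem \ref{superconc}) is \emph{derived from} Theorem \ref{chaoscont}, so invoking a ``superconcentration input'' to establish the $k=1$ chaos bound would be circular; the free-energy chaos identity you write is better viewed as a consequence than as an ingredient.
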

Again, the achievement is very modest, and does not come anywhere close to the claims of the physicists. But once again, this is the first rigorous result about chaos of any kind in the S-K model. To the best of our knowledge, the only other instance of a rigorous proof of chaos in any spin glass model is in the work of Panchenko and Talagrand~\cite{panchenkotalagrand07}, who established chaos with respect to small changes in the external field in the spherical S-K model. Disorder chaos in directed polymers was established by the author in \cite{chatterjee08}. 

A deficiency of both theorems in this subsection is that they do not cover the case of zero temperature, that is, $\beta=\infty$, where  Gibbs measure concentrates all its mass on the ground state. In principle, the same techniques should apply, but there are some crucial hurdles that cannot be cleared with the available ideas. 

\subsection{Superconcentration in the S-K model} The notion of superconcentration was defined in~\cite{chatterjee08}. The definition in \cite{chatterjee08} pertains only to maxima of Gaussian fields, but it can be generalized to roughly mean the following: a Lipschitz function of a collection of independent standard Gaussian random variables is superconcentrated whenever its order of fluctuations is much smaller than its Lipschitz constant. This definition is related to the classical concentration result for the Gaussian measure, which says that the order of fluctuations of a Lipschitz function under the Gaussian measure is bounded by its Lipschitz constant (see e.g.\ Theorem 2.2.4 in \cite{talagrand03}), irrespective of the dimension. 

The free energy of the S-K model is defined as
\begin{equation}\label{free}
F_N(\beta) := \frac{1}{\beta}\log \sum_{\bos\in \{-1,1\}^N} e^{-\beta H_N(\bos)},
\end{equation}
where $H_N$ is the Hamiltonian defined in \eqref{skhamil}. It follows from classical concentration of measure that the variance of $F_N(\beta)$ is bounded by a constant multiple of $N$ (see Corollary 2.2.5 in \cite{talagrand03}). This is the best known bound for $\beta > 1$. When $\beta < 1$, Talagrand (Theorems 2.2.7 and 2.2.13 in \cite{talagrand03}) proved that the variance can actually be bounded by an absolute constant. This is also indicated in the earlier works of Aizenman, Lebowitz and Ruelle \cite{alr87} and Comets and Neveu \cite{cometsneveu95}. Therefore, according to our definition, the free energy is superconcentrated when $\beta < 1$. The following theorem shows that $F_N$ is superconcentrated at any $\beta$. 
\begin{thm}\label{superconc}
Let $F_N(\beta)$ be the free energy of the S-K model defined above in \eqref{free}. For any $\beta$, we have
\[
\var F_N(\beta)\le \frac{C N\log (2+C\beta)}{\log N},
\]
where $C$ is an absolute constant. 
\end{thm}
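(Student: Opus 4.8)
\emph{Overall strategy.} I would derive Theorem~\ref{superconc} from the continuous disorder chaos bound of Theorem~\ref{chaoscont} via the Ornstein--Uhlenbeck representation of the variance. The first step is the identity
\[
\var F_N = \int_0^\infty e^{-t}\,\ee\bigl[\nabla F_N(g)\cdot\nabla F_N(g^t)\bigr]\,dt,
\]
where $F_N$ is regarded as a function of the $N^2$ couplings $(g_{ij})$, the perturbed couplings are $g^t_{ij} := e^{-t}g_{ij} + \sqrt{1-e^{-2t}}\,g'_{ij}$ with $(g'_{ij})$ an independent standard Gaussian family, and both gradients are taken in the couplings. This follows from writing $\var F_N = -\int_0^\infty \tfrac{d}{dt}\ee[F_N(g)F_N(g^t)]\,dt$ (the integrand equals $\ee[F_N^2]$ at $t=0$ and $(\ee F_N)^2$ at $t=\infty$) and then computing the $t$-derivative by Gaussian integration by parts, using Mehler's formula and the commutation relation $\nabla P_t = e^{-t}P_t\nabla$ for the Ornstein--Uhlenbeck semigroup $P_t$. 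Since $\partial F_N/\partial g_{ij} = (2N)^{-1/2}\langle\sigma_i\sigma_j\rangle$ (here $\langle\,\cdot\,\rangle$ denotes the Gibbs average under $G_N$), one has $|\partial F_N/\partial g_{ij}|\le (2N)^{-1/2}$ uniformly, so $F_N$ is $C^\infty$ and globally Lipschitz, and the differentiation under the expectation together with the limits at $t=0,\infty$ are justified by dominated convergence against Gaussian-integrable envelopes.

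\emph{Identifying the integrand as the coupled overlap.} Let $\langle\,\cdot\,\rangle_t$ denote the product Gibbs measure in which $\bos^1$ is drawn from $G_N$ (built on $(g_{ij})$) and $\bos^2$ from the perturbed Gibbs measure $G_N'$ (built on $(g^t_{ij})$). Using the formula $\partial F_N/\partial g_{ij} = (2N)^{-1/2}\langle\sigma_i\sigma_j\rangle$ obtained from \eqref{free} and \eqref{skhamil}, and introducing these two replicas (independent under $\langle\,\cdot\,\rangle_t$),
\[
\nabla F_N(g)\cdot\nabla F_N(g^t) = \frac{1}{2N}\sum_{i,j}\langle\sigma_i\sigma_j\rangle_{G_N}\langle\sigma_i\sigma_j\rangle_{G_N'} = \frac{1}{2N}\Bigl\langle\Bigl(\sum_{i}\sigma_i^1\sigma_i^2\Bigr)^{2}\Bigr\rangle_t = \frac{N}{2}\,\langle R_{1,2}^2\rangle_t .
\]
Plugging this into the variance identity gives $\var F_N = \tfrac{N}{2}\int_0^\infty e^{-t}\,\ee\langle R_{1,2}^2\rangle_t\,dt$. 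The point is that $\ee\langle R_{1,2}^2\rangle_t$ is exactly the expected squared overlap between an original configuration and a configuration sampled from the time-$t$ continuous perturbation, i.e.\ the quantity controlled by Theorem~\ref{chaoscont} in the case $k=1$.

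\emph{Inserting the chaos bound and integrating.} By Theorem~\ref{chaoscont} with $k=1$, $\ee\langle R_{1,2}^2\rangle_t \le C\,N^{-\min\{1,\;t/\theta\}}$ with $\theta := C\log(1+C\beta)$. I would split the integral at $t=\theta$: on $[0,\theta]$ the integrand is at most $e^{-t}N^{-t/\theta} = e^{-t(1+(\log N)/\theta)}$, which integrates to at most $\theta/\log N$; on $[\theta,\infty)$ the integrand is at most $e^{-t}/N$, which integrates to at most $1/N\le 1/\log N$. Hence $\int_0^\infty e^{-t}\,\ee\langle R_{1,2}^2\rangle_t\,dt \le C(\theta+1)/\log N \le C'\log(2+C'\beta)/\log N$, and multiplying by $N/2$ yields $\var F_N \le C N\log(2+C\beta)/\log N$ for an absolute constant $C$ (with the constants tracked in the obvious way; the degenerate case $\beta=0$, $\theta=0$ reduces to the second piece and is immediate).

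\emph{Where the real difficulty is.} Granted Theorem~\ref{chaoscont}, the computation above is essentially bookkeeping, so the genuine obstacle lies upstream, in the sharp $t$-dependent decay $\ee\langle R_{1,2}^2\rangle_t \le C N^{-\min\{1,\,ct/\log(2+\beta)\}}$ itself. Were one to prove Theorem~\ref{superconc} in isolation, this is precisely what would have to be established; the natural approach is a Gaussian interpolation on the coupled two-replica system --- linking it to its fully decorrelated analogue --- with careful accounting of the signs of the overlap terms produced by integration by parts, so that the coupling correlation $e^{-t}$ gets amplified into a genuinely small factor once $t$ exceeds $c\log(2+\beta)/\log N$. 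A minor, purely technical point is confirming the Ornstein--Uhlenbeck variance identity for the unbounded (but smooth and uniformly Lipschitz) functional $F_N$, which a standard truncation argument handles.
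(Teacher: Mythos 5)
Your proposal is correct and follows essentially the same route as the paper: the paper's one-line proof of Theorem~\ref{superconc} is precisely "combine Theorem~\ref{supergauss} (the Ornstein--Uhlenbeck variance identity, which specializes to $\var F_N = \tfrac{N}{2}\int_0^\infty e^{-t}\,\ee\langle R_{1,2}^2\rangle_{0,t}\,dt$ since $\rho_{\bos,\bos'}=\tfrac{N}{2}R_{\bos,\bos'}^2$) with Theorem~\ref{chaoscont}," and your split of the integral at $t=\theta=C\log(1+C\beta)$ is the obvious and correct way to carry out the bookkeeping. The only cosmetic difference is that you run the OU flow on the independent couplings $(g_{ij})$ while Theorem~\ref{supergauss} is stated for the correlated field $(X_\bos)$; since the field is a linear image of the couplings, these are the same interpolation and yield the identical formula.
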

This result may be reminiscent of the $\log N$ improvement in the variance of first  passage percolation time~\cite{bks03}. However, the proof is quite different in our case since hypercontractivity, the major tool in \cite{bks03}, does not seem to work for spin glasses in any obvious way. In that sense, the two results are quite unrelated. Our proof is based on our  chaos theorem for continuous perturbation (Theorem \ref{chaoscont}) and ideas from \cite{chatterjee08}.  On the other hand, Theorem \ref{superconc} is used to derive the chaos theorem for discrete perturbation, again drawing upon ideas from \cite{chatterjee08}. This equivalence between chaos and superconcentration is one of the main themes of \cite{chatterjee08}, which in a way shows the significance of superconcentration, which may otherwise be viewed as just a curious phenomenon.  

Incidentally, it was shown by Talagrand (\cite{talagrand07}, eq.\ (10.13)) that the lower tail fluctuations of $F_N(\beta)$ are actually as small as order $1$ under an unproven hypothesis about the Parisi measure. 

\subsection{Disorder chaos in the E-A model}\label{ea}
Let $G = (V, E)$ be an undirected graph. The Edwards-Anderson spin glass \cite{edwardsanderson75} on $G$ is defined through the Hamiltonian
\begin{equation}\label{eahamil}
H(\bos) := - \sum_{(i,j)\in E} g_{ij} \sigma_i\sigma_j,\ \ \bos\in \{-1,1\}^V,
\end{equation}
where $(g_{ij})$ is again a collection of i.i.d.\ random variables, often taken to be Gaussian. The S-K model corresponds to the case of the complete graph, up to normalization by $\sqrt{N}$. 

For a survey of the (few) rigorous and non-rigorous results available for the Edwards-Anderson model, we refer to Newman and Stein \cite{newmanstein07}.

Unlike the S-K model, there are two kinds of overlap in the E-A model. The `site overlap' is the usual overlap defined in \eqref{r12}. The `bond overlap' between two states $\bos^1$ and $\bos^2$, on the other hand, is defined as
\begin{equation}\label{q12}
Q_{1,2} := \frac{1}{|E|} \sum_{(i,j)\in E} \sigma_i^1 \sigma_j^1\sigma_i^2\sigma_j^2.
\end{equation}
We show that the bond overlap in the E-A model is not chaotic with respect to small fluctuations of the couplings at any temperature. This does not say anything about the site overlap; the site overlap in the E-A model can well be chaotic with respect to small fluctuations of the couplings, as predicted in~\cite{fisherhuse86, braymoore87}.
\begin{thm}\label{nochaos}
Suppose the E-A Hamiltonian \eqref{eahamil} on a graph $G$ is continuously perturbed up to time $t\ge 0$, according to the definition of continuous perturbation in Section~\ref{chaossec}. Let $\bos^1$ be chosen from the original Gibbs measure at inverse temperature $\beta$ and $\bos^2$ is chosen from the perturbed measure. Let the bond overlap $Q_{1,2}$ be defined as in~\eqref{q12}. Let
\[
{\textstyle q := \min \{\beta^2, \frac{1}{4d^2}\}},
\]
where $d$ is the maximum degree of $G$.
Then
\[
\ee(Q_{1,2}) \ge Cqe^{-t/Cq},
\]
where $C$ is a positive absolute constant. Moreover, the result holds for $\beta=\infty$ also, with the interpretation that the Gibbs measure at $\beta = \infty$ is just the uniform distribution on the set of ground states. 
\end{thm}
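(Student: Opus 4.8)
The plan is to reduce, bond by bond, to the fact that the first--order (degree one) Hermite component of the map $g\mapsto\langle\sigma_i\sigma_j\rangle_g$ survives \emph{any} amount of continuous perturbation, and to a lower bound on the resulting ``bond susceptibility''. Write $\langle\cdot\rangle$ for the Gibbs average at inverse temperature $\beta$ with couplings $g=(g_{ij})$, put $m_{ij}(g):=\langle\sigma_i\sigma_j\rangle$, and let $g'=e^{-t}g+\sqrt{1-e^{-2t}}\,\tilde g$ be the time-$t$ perturbation, so each pair $(g_{ij},g_{ij}')$ is a standard bivariate normal with correlation $e^{-t}$, independently over edges. Since $\bos^1,\bos^2$ are conditionally independent given the couplings, $\ee(Q_{1,2})=\frac1{|E|}\sum_{(i,j)\in E}\ee[m_{ij}(g)m_{ij}(g')]$. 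Expanding $m_{ij}$ in multivariate Hermite polynomials and using the correlation structure of $(g,g')$ gives $\ee[m_{ij}(g)m_{ij}(g')]=\sum_{n\ge1}e^{-nt}\,\|\Pi_nm_{ij}\|_2^2$, where $\Pi_n$ projects onto Hermite degree $n$ and $\|\cdot\|_2$ is the $L^2$-norm under the disorder law; all terms are nonnegative, so
\[
\ee[m_{ij}(g)m_{ij}(g')]\ \ge\ e^{-t}\,\|\Pi_1m_{ij}\|_2^2\ =\ e^{-t}\!\!\sum_{(k,l)\in E}\!\bigl(\ee[\partial_{g_{kl}}m_{ij}]\bigr)^2 .
\]
Now invoke the gauge symmetry: negating all couplings incident to a vertex $v$ preserves the law of the disorder and sends $\langle\sigma_a\sigma_b\rangle$ to $\pm\langle\sigma_a\sigma_b\rangle$ according as $v\notin\{a,b\}$ or $v\in\{a,b\}$. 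Applied at $v=i$, this forces $\ee[\partial_{g_{kl}}m_{ij}]=0$ unless the edge $(k,l)$ meets $\{i,j\}$; keeping only the term $(k,l)=(i,j)$ and using $\partial_{g_{ij}}\langle\sigma_i\sigma_j\rangle=\beta(1-\langle\sigma_i\sigma_j\rangle^2)$ together with Gaussian integration by parts, $\|\Pi_1m_{ij}\|_2^2\ge(\ee[\partial_{g_{ij}}m_{ij}])^2=\beta^2(\ee[1-m_{ij}^2])^2$.

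This reduces everything to the \emph{key estimate}: there is an absolute $c>0$ with $\beta\,\ee[1-\langle\sigma_i\sigma_j\rangle^2]\ge c\,\min\{\beta,\tfrac1{2d}\}$ for every edge. Granting it, $\|\Pi_1m_{ij}\|_2^2\ge c^2\min\{\beta^2,\tfrac1{4d^2}\}=c^2q$, hence $\ee(Q_{1,2})\ge c^2q\,e^{-t}$, and since $q\le\frac1{4d^2}\le\frac14$ we may pick the constant so small that $Cq\le1$, whence $e^{-t}\ge e^{-t/(Cq)}$ and $\ee(Q_{1,2})\ge Cq\,e^{-t/Cq}$. To prove the key estimate I would proceed by the conditional (on all spins but $\sigma_i$) variance bound $1-m_{ij}^2=\operatorname{Var}_{\langle\cdot\rangle}(\sigma_i\sigma_j)\ge\bigl\langle\operatorname{sech}^2(\beta h_i)\bigr\rangle$, where $h_i=\sum_{(i,k)\in E}g_{ik}\sigma_k$ is the local field at $i$, a sum of at most $d$ terms; then run the cavity at $i$: with $\langle\cdot\rangle_{\sim i}$ the Gibbs measure with vertex $i$ deleted (so the couplings $(g_{ik})_{(i,k)\in E}$ are genuine independent standard Gaussians, independent of $\langle\cdot\rangle_{\sim i}$), one has the identity $\langle\operatorname{sech}^2\beta h_i\rangle=\langle\operatorname{sech}\beta h_i\rangle_{\sim i}\big/\langle\cosh\beta h_i\rangle_{\sim i}$. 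Taking $\ee$, conditioning on the disorder away from $i$, and using the elementary Gaussian estimate $\mathbb P(|N(0,\varsigma^2)|\le x)\ge c\min\{1,x/\varsigma\}$ with $\varsigma^2\le d$, one obtains the bound provided the $\cosh(\beta h_i)$-reweighting of the at most $d$ neighbour spins can be controlled.

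For $\beta=\infty$ one passes to the limit: since $q(\beta)=\frac1{4d^2}$ for all $\beta\ge\frac1{2d}$, the estimate $\ee(Q_{1,2})\ge Cq\,e^{-t/Cq}$ is uniform in that range, and $\ee\langle Q_{1,2}\rangle_\beta\to\ee\langle Q_{1,2}\rangle_\infty$ because the ground state is almost surely unique under Gaussian disorder, so Gibbs averages of bounded functions converge to their ground-state values and dominated convergence applies. I expect the main obstacle to lie in the low-temperature regime of the key estimate: the Gibbs conditioning tilts the $\le d$ neighbour spins $(\sigma_k)_{(i,k)\in E}$ toward configurations that \emph{inflate} $|h_i|$ and hence shrink $\operatorname{sech}^2(\beta h_i)$, and the content of the bounded-degree hypothesis is that this inflation is at worst a $d$-dependent factor; the naive deterministic bound $|h_i|\le\sum_{(i,k)\in E}|g_{ik}|$ is exponentially lossy in $d$ and must be avoided. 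Relatedly, as $\beta\to\infty$ the first-order Hermite coefficient $\|\Pi_1m_{ij}\|_2^2$ itself degenerates (the map $g\mapsto m_{ij}$ becomes locally constant away from ground-state transitions), so a careful treatment must instead bound how fast $\ee(Q_{1,2}(t))$ can drop; this is precisely what forces the weaker decay rate $e^{-t/Cq}$ rather than $e^{-t}$, and I regard it as the technical heart of the argument.
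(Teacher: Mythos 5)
Your proposal takes a genuinely different route from the paper. The paper does \emph{not} expand $m_{ij}$ in Hermite polynomials and lower-bound by the degree-one term; instead it (a) derives the variance formula $\var F(\beta)=\int_0^\infty e^{-t}\,\ee\av{\rho}_{0,t}\,dt$ (Theorem \ref{supergauss}), (b) notes that $\phi(t):=\ee\av{\rho}_{0,t}$ is completely monotone, so $\phi(t)=\phi(0)\,\ee(e^{-tU})$ for a nonnegative random variable $U$ and hence $\var F=\phi(0)\,\ee((1+U)^{-1})$, (c) lower-bounds $\var F$ by Theorem \ref{nosuper}, and (d) applies the elementary Lemma \ref{lowlmm} on the Laplace transform of $U$ to pull a lower bound on $\phi(t)$ out of a lower bound on $\ee((1+U)^{-1})$. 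Your Hermite expansion is of course the same spectral object as the paper's Bernstein representation (with $U$ integer-valued and $p_n\propto\|\Pi_n m_{ij}\|_2^2$), and your ``keep only the $n=1$ term'' is a sharper, more structural use of it: if it could be completed it would give $\ee(Q_{1,2})\ge cq\,e^{-t}$, which is strictly stronger than the paper's $Cq\,e^{-t/Cq}$, since the decay rate $1$ replaces $1/(Cq)$. A small point: your displayed identity should start the sum at $n=0$; the $n=0$ term $(\ee m_{ij})^2$ happens to vanish by the gauge symmetry you invoke, but you should say so.

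The genuine gap is exactly where you flag it: the ``key estimate'' $\beta\,\ee[1-\langle\sigma_i\sigma_j\rangle^2]\ge c\min\{\beta,\tfrac1{2d}\}$ \emph{per edge} at the original $\beta$ is left unproved, and your cavity sketch does not control the $\cosh(\beta h_i)$ reweighting; your own bound $\langle\operatorname{sech}^2\beta h_i\rangle\ge\bigl(\langle\operatorname{sech}\beta h_i\rangle_{\sim i}\bigr)^2$ followed by Jensen loses a factor of $\beta$ and produces only $c/(\beta^2 d)$, which is too weak. Nothing in the rest of your argument can recover this loss. However, the pointwise estimate is not actually needed: what you need is the averaged lower bound $\frac{1}{|E|}\sum_{(i,j)}\|\Pi_1 m_{ij}\|_2^2\gtrsim q$, and this drops out of the paper's own machinery if you organize it differently. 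Namely,
\[
\sum_{(i,j)}\|\Pi_1 m_{ij}\|_2^2\ \ge\ \sum_{(i,j)}\bigl(\ee[g_{ij}m_{ij}]\bigr)^2\ \ge\ \frac{1}{|E|}\Bigl(\sum_{(i,j)}\ee[g_{ij}m_{ij}]\Bigr)^2 = \frac{1}{|E|}\bigl(\ee[\partial_\beta\log Z_\beta]\bigr)^2,
\]
and $\ee[\partial_\beta\log Z_\beta]$ is nondecreasing in $\beta$ (convexity of $\log Z$), so it may be bounded below at any $\beta'\le\beta$; taking $\beta'=\min\{\beta,\tfrac1{2d}\}$ and using $|\tanh x|\le|x|$ as in the paper's proof of Theorem \ref{nosuper} gives $\ee[\partial_{\beta'}\log Z_{\beta'}]=\beta'\sum_{(i,j)}\ee[1-m_{ij,\beta'}^2]\ge\frac{3}{4}\beta'|E|$, hence $\frac1{|E|}\sum\|\Pi_1 m_{ij}\|_2^2\ge\frac{9}{16}q$ and $\ee(Q_{1,2})\ge\frac9{16}q\,e^{-t}$. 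This sidesteps your low-temperature obstruction entirely by averaging over edges and then changing the temperature where the estimate is trivial, which is precisely the trick hiding in the paper's Lemma \ref{varlmm}. As written, without this or a proof of the pointwise key estimate, your argument is incomplete. The $\beta=\infty$ passage is fine; note only that you do not need uniqueness of the ground state, just weak convergence of the Gibbs measure to the uniform measure on ground states, which is what the paper uses.
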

An interesting case of the above theorem is when $t = 0$. The result then says that if two configurations are drawn independently from the Gibbs measure, they have a non-negligible bond overlap with non-vanishing probability. The fact that this holds at any finite temperature is in contrast with the mean-field case (i.e.\ the S-K model), where there is a high-temperature phase ($\beta < 1$) where the bond overlap becomes negligible.

However, while Theorem \ref{nochaos} establishes that the bond overlap does not become zero for any amount of perturbation, it does exhibit a sort of `quenched chaos', in the following sense.
\begin{thm}\label{quenchedchaos}
Fix $t > 0$ and let $Q_{1,2}$ be as in Theorem \ref{nochaos}. Then
\[
\ee\av{(Q_{1,2} - \av{Q_{1,2}})^2} \le \frac{2}{\beta e^{-{t/2}} \sqrt{t|E|}}. 
\]
\end{thm}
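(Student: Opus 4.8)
The plan is to reduce the expected thermal variance of $Q_{1,2}$ to two directional second derivatives of Gibbs log-partition functions and then apply the standard superconcentration move of trading such a second derivative for a first derivative averaged over the disorder. Write $\av{\cdot}_1$ and $\av{\cdot}_2$ for the partial thermal averages over $\bos^1$ and $\bos^2$ (so $\av{\cdot}=\av{\av{\cdot}_1}_2=\av{\av{\cdot}_2}_1$), and abbreviate $\sigma^a_e:=\sigma^a_i\sigma^a_j$ for an edge $e=(i,j)$, so $Q_{1,2}=|E|^{-1}\sum_{e\in E}\sigma^1_e\sigma^2_e$. A one-line computation gives
\[
\av{(Q_{1,2}-\av{Q_{1,2}})^2}=\av{(Q_{1,2}-\av{Q_{1,2}}_2)^2}+\av{(\av{Q_{1,2}}_2-\av{Q_{1,2}})^2},
\]
the cross term vanishing because $\av{Q_{1,2}}_2-\av{Q_{1,2}}$ is a function of $\bos^1$ alone while $\av{Q_{1,2}-\av{Q_{1,2}}_2}_2=0$. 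With $\bos^1$ frozen, $Q_{1,2}$ is affine in $\bos^2$ with coefficients $\sigma^1_e/|E|$, so the first term equals $\av{\var_2(Q_{1,2})}_1$ with $\var_2(Q_{1,2})=\beta^{-2}|E|^{-2}\,v^{T}\nabla^2\log Z_2\,v$, where $Z_2$ is the partition function of the perturbed system regarded as a function of its couplings, $\nabla^2$ is the Hessian in those couplings, and $v=(\sigma^1_e)_{e\in E}$, so that $\|v\|=\sqrt{|E|}$ and $\beta^{-1}|v^{T}\nabla\log Z_2|=|\sum_e\sigma^1_e\av{\sigma^2_e}_2|\le|E|$. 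Symmetrically, the second term equals $\var_1(\av{Q_{1,2}}_2)=\beta^{-2}|E|^{-2}\,w^{T}\nabla^2\log Z_1\,w$ with $w=(\av{\sigma^2_e}_2)_{e\in E}$, for which $\|w\|\le\sqrt{|E|}$ and $\beta^{-1}|w^{T}\nabla\log Z_1|\le|E|$ as well.

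The key ingredient is the Gaussian identity obtained from a single integration by parts: if $\phi$ has bounded gradient, $v$ is a fixed vector, $\mu$ a fixed mean vector, $c>0$ a constant and $G$ a standard Gaussian vector, then
\[
\ee\bigl[v^{T}\nabla^2\phi(\mu+cG)\,v\bigr]=\frac1c\,\ee\bigl[(v^{T}G)\,(v^{T}\nabla\phi(\mu+cG))\bigr]\le\frac1c\,\sup|v^{T}\nabla\phi|\cdot\ee|v^{T}G|.
\]
Applied with $\phi=\log Z_1$ or $\log Z_2$, the preceding bounds $|v^{T}\nabla\phi|\le\beta|E|$ and $\ee|v^{T}G|=\|v\|\sqrt{2/\pi}\le\sqrt{(2/\pi)|E|}$ give $\ee[v^{T}\nabla^2\phi\,v]\le c^{-1}\beta\sqrt{2/\pi}\,|E|^{3/2}$, provided the direction and the mean are frozen and the couplings driving the Hessian have law $N(\mu,c^2I)$. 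It remains to choose the conditioning so that $c^2=1-e^{-2t}$. For the first term, condition on the original couplings $g$ and on $\bos^1$: then $v$ is fixed, and the perturbed couplings are $e^{-t}g+\sqrt{1-e^{-2t}}\,g'$ with $g'$ standard Gaussian and independent of $(g,\bos^1)$. For the second term, condition instead on the perturbed couplings $g^{t}$: then $w$ is fixed (it is a deterministic function of $g^{t}$), and $g$ given $g^{t}$ is $N(e^{-t}g^{t},(1-e^{-2t})I)$. In both cases $c=\sqrt{1-e^{-2t}}$, so each of the two terms is at most $\sqrt{2/\pi}\,\bigl(\beta\sqrt{|E|(1-e^{-2t})}\bigr)^{-1}$.

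Adding the two estimates gives $\ee\av{(Q_{1,2}-\av{Q_{1,2}})^2}\le 2\sqrt{2/\pi}\,\bigl(\beta\sqrt{|E|(1-e^{-2t})}\bigr)^{-1}$, and the stated bound follows from $2\sqrt{2/\pi}<2$ together with the elementary inequality $1-e^{-2t}\ge 1-e^{-t}\ge te^{-t}$ (the last step being $e^{t}\ge1+t$), which lets one replace $\sqrt{1-e^{-2t}}$ by the cleaner $e^{-t/2}\sqrt t$. I do not expect a genuine obstacle: the integration-by-parts identity and the bound $|v^{T}\nabla\log Z|\le\beta|E|$ are routine, and the only delicate point is the bookkeeping of the conditioning --- making sure that in each of the two terms the direction vector is measurable with respect to what one conditions on while the couplings feeding the Hessian retain exactly the product Gaussian law $N(\mu,(1-e^{-2t})I)$ with $\mu$ measurable too, so that the identity applies verbatim. (One could alternatively run both terms through the symmetric representation $g=e^{-t/2}h+\sqrt{1-e^{-t}}\,\xi^1$, $g^{t}=e^{-t/2}h+\sqrt{1-e^{-t}}\,\xi^2$, handling the two pieces uniformly at the cost of $e^{-t}$ rather than $e^{-2t}$ inside the root.)
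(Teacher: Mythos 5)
Your proof is correct and takes a genuinely different route from the paper. Chatterjee introduces the symmetrized pair $\bg^{\pm t}$, defines $\phi(t)=|E|^{-1}\sum_e\ee(\av{\sigma^t_e}\av{\sigma^{-t}_e})$, invokes the complete-monotonicity machinery of Lemma~\ref{complete} to get the soft bound $|\phi'(t)|\le 1/t$, then computes $\phi'$ explicitly via Lemma~\ref{basic} and identifies $-\phi'(t)$ with the quenched sum $\sum_{e,f}\ee(\bar e_{e f}^2)$; the quenched variance of $Q$ is then controlled by Cauchy--Schwarz against that sum. You instead decompose the thermal variance orthogonally into $\av{\var_2(Q)}_1+\var_1(\av{Q}_2)$, recognize each piece as a Hessian quadratic form $\beta^{-2}|E|^{-2}v^T\nabla^2\log Z\,v$, and run a single Gaussian integration by parts to trade the Hessian for a first derivative, producing the $1/\sqrt{1-e^{-2t}}$ factor directly. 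The conditioning scheme is handled carefully and correctly: in the first term you fix $(g,\bos^1)$ and let $g^t=e^{-t}g+\sqrt{1-e^{-2t}}\,g'$ supply the integrating Gaussian; in the second you fix $g^t$ and use $g\mid g^t\sim N(e^{-t}g^t,(1-e^{-2t})I)$. Your route avoids complete monotonicity entirely, is more elementary, and in fact yields the slightly sharper bound $2\sqrt{2/\pi}\,\beta^{-1}(|E|(1-e^{-2t}))^{-1/2}$ (which remains bounded as $t\to\infty$, unlike the stated bound); the paper's route is less sharp here but fits naturally into the general framework (Lemmas~\ref{basic} and~\ref{complete}) that drives the rest of the paper, and $|\phi'(t)|\le1/t$ comes for free from the Bernstein representation. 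The final reduction $1-e^{-2t}\ge te^{-t}$ and $\sqrt{2/\pi}<1$ correctly recovers the paper's stated constant.
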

That is, if we perturb the system by an amount $t\gg |E|^{-1}$, the bond overlap between two configurations drawn from the two Gibbs measures is approximately equal to the quenched average of the overlap. In physical terms, the overlap `self-averages'. 

The combination of the last two theorems brings to light a surprising phenomenon. On the one hand, the perturbation retains a memory of the original Gibbs measure, because the overlap is non-vanishing in Theorem~\ref{nochaos}. On the other hand, the perturbation causes a chaotic reorganization of the Gibbs measure in such a way that the overlap concentrates on a single value in Theorem \ref{quenchedchaos}. The author can see no clear explanation of this confusing outcome. 

\subsection{Absence of superconcentration in the E-A model}
The proof of Theorem \ref{nochaos} is based on the following result, which says that the free energy is not superconcentrated in the E-A model on bounded degree graphs. This generalizes a well-known result of Wehr and Aizenman \cite{aizenmanwehr90}, who proved the analogous result on square lattices. The relative advantage of our approach is that it does not use the structure of the graph, whereas the Wehr-Aizenman proof depends heavily on properties of the lattice. 
\begin{thm}\label{nosuper}
Let $F(\beta)$ denote the free energy in the Edwards-Anderson model on a graph $G$, defined in \eqref{free}. Let $d$ be the maximum degree of $G$. Then for any $\beta$, including $\beta = \infty$ (where the free energy is just the energy of the ground state), we have
\[
\var F(\beta) \ge \frac{9|E|}{32}\min\biggl\{\beta^2, \frac{1}{4d^2}\biggr\}. 
\]
\end{thm}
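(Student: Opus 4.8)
The idea is to bound $\var F$ below by a sum of ``fluctuation contributed by each bond'', in the spirit of Wehr and Aizenman, but replacing their lattice-specific surface-energy estimates by an argument using only convexity and the bounded-degree hypothesis. For an edge $e=(i,j)$, differentiating \eqref{free} gives $\partial F/\partial g_e=m_e:=\av{\sigma_i\sigma_j}$ and $\partial^2 F/\partial g_e^2=\beta(1-m_e^2)\ge 0$; thus $F$ is convex in each coupling, $\partial F/\partial g_e\in[-1,1]$, and $\partial F/\partial g_e\ra\pm 1$ as $g_e\ra\pm\infty$ (for $\beta=\infty$, read $m_e$ as the average of $\sigma_i\sigma_j$ over the ground states; $F$ is then piecewise linear and still convex in each $g_e$). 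Two structural facts drive the argument. First, the representation
\[
m_e=\tanh\bigl(\beta(g_e-x_e)\bigr),\qquad x_e:=-\tfrac1{2\beta}\log\bigl(Z^+_e/Z^-_e\bigr),
\]
where $Z^{\pm}_e$ is the partition function over configurations with $\sigma_i\sigma_j=\pm1$ and with the edge $e$ deleted, so that $x_e$ is a function of $(g_{e'})_{e'\ne e}$ only. Second, the deterministic bound $|x_e|\le\min(B_i,B_j)$, where $B_i:=\sum_{k\sim i,\ k\ne j}|g_{ik}|$; this holds because flipping the single spin $\sigma_i$ is a bijection between the two restricted configuration sets that changes the edge-deleted energy by at most $2B_i$. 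The second fact is the entry point of the degree: $B_i$ is a sum of at most $d-1$ independent half-normals, so the ``environmental bias'' $x_e$ felt by the bond is controlled by only $O(d)$ couplings.

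Next I would use the Doob decomposition over bonds. Ordering the edges $e_1,\dots,e_m$ ($m=|E|$) and setting $\mathcal F_k=\sigma(g_{e_1},\dots,g_{e_k})$,
\[
\var F(\beta)\ =\ \sum_{k=1}^m\ee\bigl[\var_{g_{e_k}}\!\bigl(h_k(g_{e_k})\bigr)\bigr],\qquad h_k(x):=\ee\bigl[F\mid\mathcal F_{k-1},\,g_{e_k}=x\bigr].
\]
Each $h_k$ is convex, being $F$ averaged over the not-yet-revealed couplings, with $h_k'(x)=\ee[\tanh(\beta(x-x_{e_k}))\mid\mathcal F_{k-1}]$; by the bound $|x_{e_k}|\le B_i$, this derivative has already reached $\pm1$ up to a negligible error once $|x|$ exceeds a quantity of order $d$, on an event (the couplings at $i$ not being atypically large) of probability bounded away from $0$. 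It therefore suffices to show that each summand is at least $\tfrac9{32}\min\{\beta^2,\tfrac1{4d^2}\}$.

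For the per-bond estimate I would split on the size of $\beta$. If $\beta\le\tfrac1{2d}$, a second-order Taylor expansion of $h_k$ about the origin gives $\var_Z(h_k(Z))\ge\tfrac12 h_k''(0)^2-O(\beta^4)$ for $Z\sim N(0,1)$, and $h_k''(0)=\beta\,\ee[\mathrm{sech}^2(\beta x_{e_k})\mid\mathcal F_{k-1}]$ is at least a fixed multiple of $\beta$ because $|\beta x_{e_k}|\le\beta B_i=O(\beta d)=O(1)$ on a likely event; this yields $\var_{g_{e_k}}(h_k(g_{e_k}))\gtrsim\beta^2$. If $\beta>\tfrac1{2d}$ one must instead produce $\var_{g_{e_k}}(h_k(g_{e_k}))\gtrsim d^{-2}$. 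Here $h_k''$ — which equals twice the conditional density of $x_{e_k}$ when $\beta=\infty$, and is close to it otherwise — is a nonnegative function supported, up to negligible mass, on an interval of length $O(d)$, with total mass $2$, hence of average size $\gtrsim1/d$ there; the work is to upgrade ``curvature of order $1/d$ on average over an interval of length $O(d)$'' to ``curvature of order $1/d$ on a neighbourhood of the origin of positive Gaussian mass'', which is what a quadratic lower bound for $\var_Z(h_k(Z))$ requires.

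The step I expect to be the main obstacle is ruling out the bad scenario in which the conditional law of $x_{e_k}$ puts most of its mass near $\pm c$ for some $c$ of order $d$, symmetrically, making $h_k$ nearly flat near the origin. I would rule it out through the definition of $x_{e_k}$: writing $A^{\pm}_e:=\tfrac1\beta\log Z^{\pm}_e$, the sign of $x_{e_k}$ is the sign of $A^-_e-A^+_e$, i.e.\ it records which value of $\sigma_i\sigma_j$ the edge-deleted system prefers; for this sign to differ across the unrevealed couplings one needs $A^+_e$ and $A^-_e$ to come close, that is $|x_{e_k}|$ to be small. Hence any ``sign ambiguity'' of $x_{e_k}$ drags a constant-order fraction of its conditional mass to within $O(1)$ of $0$, where $h_k''\gtrsim1/d$; and if $x_{e_k}$ has a definite sign, the corner of $h_k$ sits to one side and $h_k$ is essentially linear of slope $\mp1$ near the origin, so $\var_{g_{e_k}}(h_k(g_{e_k}))\gtrsim1$. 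Carrying out this dichotomy rigorously — interpolating cleanly and uniformly in $\mathcal F_{k-1}$ between the ``sharp corner near $0$'', ``broad corner near $0$'', and ``corner off to one side'' cases — is the delicate part. The case $\beta=\infty$ runs along the same lines with $m_e=\mathrm{sign}(g_e-x_e)$, the density interpretation of $h_k''$ being exact. Summing over $k$ and tracking the numerical constants then yields $\var F(\beta)\ge\frac{9|E|}{32}\min\{\beta^2,\frac1{4d^2}\}$.
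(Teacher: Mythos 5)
Your approach is genuinely different from the paper's, and the per-bond estimate in the low-temperature regime has a gap that you flag but do not close, and which I do not think can be closed by the argument you sketch.

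The paper proves a general lower bound $\var(f(\bg)) \ge \frac{1}{2n}\bigl(\ee(\bg\cdot\nabla f(\bg))\bigr)^2$ for functions of i.i.d.\ Gaussians (keeping only the diagonal second-order terms of a Hermite/Plancherel expansion of the variance), notes that $\bg\cdot\nabla F_\beta=\partial_\beta\log Z$ is nonnegative and monotone in $\beta$ by convexity, and thereby reduces to a small $\beta'\le\beta$ where the elementary single-site bound $|\av{\sigma_i\sigma_j}_{\beta'}|\le\beta' d$ kills the cross term. That argument is global in the couplings; no decomposition over bonds is needed, and the degree bound enters only through the single-site conditional. Your route --- a martingale decomposition of the variance over edges, the representation $m_e=\tanh\bigl(\beta(g_e-x_e)\bigr)$ with $|x_e|\le B_i$, and a per-bond curvature estimate for the convex function $h_k$ --- is much closer in spirit to Wehr--Aizenman, and the ingredients in your first two paragraphs are correct. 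If it could be pushed through it would say something stronger (a lower bound on each martingale increment), but it is a fundamentally different and harder road.

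The gap is in the dichotomy you use to handle $\beta>1/(2d)$. You want to show that the conditional law of $x_{e_k}$ given $\mathcal F_{k-1}$ cannot be essentially a symmetric two-point mixture at $\pm c$ with $c\asymp d$, which would make $h_k$ nearly flat in a Gaussian-mass neighbourhood of $0$. Your argument --- that a sign change of $x_{e_k}$ over the unrevealed couplings forces $|x_{e_k}|$ to be small, hence ``drags a constant-order fraction of its conditional mass to within $O(1)$ of $0$'' --- does not follow. The map from the unrevealed couplings to $x_{e_k}$ is continuous (piecewise linear at $\beta=\infty$), so if it takes both signs it has a zero, but the pushforward measure can put arbitrarily little mass near that zero; a crossing is a null event, not a mass concentration. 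So the trichotomy ``sharp corner near $0$ / broad corner near $0$ / corner off to one side'' is not exhaustive in the quantitative, uniform-in-$\mathcal F_{k-1}$ form your per-bond bound requires, and the symmetric-bimodal scenario is not excluded by what you write. As it stands the proof does not close; you would need a genuinely new input (or a different conditioning scheme) to rule out that scenario. The paper avoids the issue entirely by never localizing the variance to individual bonds.
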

The above result is based on a formula (Theorem \ref{varformula}) for the variance of an arbitrary smooth function of Gaussian random variables.

\subsection{A note about other models}
It will clear from our proofs that the chaos and superconcentration results hold for the $p$-spin versions of the S-K model for even $p$. (See Chapter 6 of \cite{talagrand03} for the definition of these models and various results.) In  fact, a generalization of Theorem \ref{chaoscont} is proven in Theorem \ref{pspin} later, which includes the $p$-spin models for even $p$. 

It will also be clear that the lack of superconcentration is true in the Random Field Ising Model on general bounded degree graphs. (Again, the lattice case is handled in \cite{aizenmanwehr90}. We refer to \cite{aizenmanwehr90} for the definition of the RFIM.) The absence of superconcentration in the RFIM implies that the {\it site} overlap is stable under perturbations, instead of the bond overlap as in the E-A model.

A simple model where our techniques give sharp results is the Random Energy Model (REM). This is discussed in Subsection \ref{rem}. 

\subsection{Unsolved questions}
In spite of the progress made in this paper over~\cite{chatterjee08}, many key issues are still out of reach. Some of them are as follows:
\begin{enumerate}
\item Improve the multiple valley theorem (Theorem \ref{multisk}) so that $\delta_N$ is a negative power of $N$, preferably better than $N^{-1/2}$, which will prove `strong multiple valleys' in the sense of \cite{chatterjee08}. 
\item Another possible improvement to Theorem \ref{multisk} can be achieved by increasing $r_N$ to something of the form $\exp (N^\alpha)$. 
\item Prove the chaos theorems (Theorems~\ref{chaosdisc} and \ref{chaoscont}) for the ground state ($\beta = \infty$) of the S-K model.
\item Improve the superconcentration result (Theorem \ref{superconc}) so that the right hand side is $N^\alpha$ for some $\alpha < 1$. This is tied to the improvement of the chaos result. 
\item If the above is not possible, at least prove a version of the superconcentration result where the right hand side does not depend on~$\beta$, or has a better dependence than $\log \beta$. This will solve the question of chaos for $\beta = \infty$. 
\item Prove that the site overlap in the Edwards-Anderson model is chaotic with respect to fluctuations in the disorder, even though the bond overlap is not.
\item Prove disorder chaos in the S-K model with nonzero external field, that is, if there is an additional term of the form $h \sum \sigma_i$ in the Hamiltonian. The general nature of the S-K model indicates that any result for $h\ne 0$ may be substantially harder to prove than for~$h=0$. (Reportedly, a sketch of the proof in this case will appear in the new edition of \cite{talagrand03}.)
\item Show that in the E-A model, the variance of $\av{Q_{1,2}}$ tends to zero and the graph size goes to infinity.
\item Establish temperature chaos in any of these models. 
\end{enumerate}
The rest of the paper is organized as follows. In Section \ref{sketches}, we sketch the proofs of the main results. In Section \ref{proofs}, we present some general results that cover a wider class of Gaussian fields. All proofs are given in Section~\ref{proofs}. 

\section{Proof sketches}\label{sketches}
In this section we give very short sketches of some of the main ideas of this paper. 
\subsection{Multiple valleys from chaos}
Suppose we choose $\bos^1$ from the Gibbs measure $G_N$ at inverse temperature $\beta$ and $\bos^2$ from the  measure $G_N'$ obtained by applying a continuous perturbation up to time $t$. Let $H_N$ and $H_N'$ be the two Hamiltonians. Suppose $\beta = \beta(N) \ra \infty$ and $t = t(N)\ra 0$ sufficiently slowly so that chaos holds (i.e.\ $\ee(R_{1,2}^2) \ra 0$ as $N \ra \infty$). Clearly this is possible by Theorem \ref{chaoscont}. Then due to chaos, $\bos^1$ and $\bos^2$ are approximately orthogonal. Since $\beta \ra \infty$, $\bos^1$ nearly minimizes $H_N$ and $\bos^2$ nearly minimizes $H_N'$.
But, since $t \ra 0$, $H_N \approx H_N'$.
Thus, $\bos^1$ and $\bos^2$ both nearly minimize $H_N$. This procedure finds two states that have nearly minimal energy and are nearly orthogonal. Repeating this procedure, we find many such states. The details are of this argument are worked out in Subsection \ref{multval}.

\subsection{Superconcentration iff chaos under continuous perturbations}
Let $\phi(t)$ denote $\ee(R_{1,2}^2)$ when $\bos^1$ is drawn from the unperturbed Gibbs measure at inverse temperature $\beta$ and $\bos^2$ is drawn from the Gibbs measure continuously perturbed up to time $t$. Let $F_N(\beta)$ be the free energy defined in \eqref{free}. Then we show that
\begin{equation}\label{varsketch}
\var(F_N(\beta)) = N\int_0^\infty e^{-t}\phi(t) dt. 
\end{equation}
The proof of this result (Theorem \ref{supergauss}) is simply a combination of the heat equation for the Ornstein-Uhlenbeck process and integration-by-parts. The formula directly shows that $\var(F_N(\beta)) = o(N)$ whenever $\phi(t)$ falls of sharply to zero, which is a way of saying that chaos implies superconcentration.

In Subsection \ref{chaosgauss}, we show that $\phi$ is a nonnegative and decreasing function. This proves the converse implication, since the integral of a nonnegative decreasing function can be small only if the function drops off sharply to zero.

\subsection{Chaos under continuous perturbations} 
Suppose $\bos^1$ is drawn from the Gibbs measure of the S-K model at inverse temperature $\beta$, and $\bos^2$ from the measure continuously perturbed up to time $t$. Let $R_{1,2}$ be the overlap of $\bos^1$ and $\bos^2$, as usual, and let 
\[
\phi_k(t) := \ee(R_{1,2}^{2k}).
\] 
We have to show that for all $t$, 
\[
\phi_k(t) \le C N^{-k\min\{1,t/C\}}
\]
where $C$ is some constant that depends only on $\beta$. 

By repeated applications of differentiation and  Gaussian integration-by-parts, we show that  $(-1)^j \phi^{(j)}_k(t) \ge 0$ for all $t$ and $j$. Here $\phi^{(j)}_k$ denotes the $j$th derivative of $\phi_k$. Such functions are called  completely monotone.
Now, by a classical theorem of Bernstein about completely monotone functions, there is a probability measure $\mu_k$ on $[0,\infty)$ such that
\begin{equation}\label{inter}
\phi_k(t) = \phi_k(0)\int_0^\infty e^{-xt} d\mu_k(x). 
\end{equation}
By H\"older's inequality and the above representation, it follows that for $0\le t< s$,
\[
\phi_k(t) \le \phi_k(0)^{1-t/s} \phi_k(s)^{t/s}. 
\]
In other words, {\it chaos under large perturbations implies chaos under small perturbations}.
 Thus, it suffices to prove that $\phi_k(s)\le const. N^{-k}$ for sufficiently large $s$.

The next step is an `induction from infinity'. It is not difficult to see that when $t = \infty$, after integrating out the disorder,  $\bos^1$ and $\bos^2$ are independent and uniformly distributed on $\{-1,1\}^N$. From this it follows that $\phi_k(\infty) = const.N^{-k}$. We use this to obtain a similar bound on $\phi_k(s)$ for sufficiently large $s$, through the following steps. First, we show that for any $k$ and $s$, 
\[
\phi_k'(s) \ge -2N\beta^2 e^{-s} \phi_{k+1}(s).
\]
Thus, we have a {\it chain of differential inequalities}. 
It is possible to manipulate this chain to conclude that
\[
\phi_k(s) \le 2^{-2N}\sum_{\bos^1,\bos^2}\biggl(\frac{\bos^1\cdot\bos^2}{N}\biggr)^{2k} \exp\biggl(2\beta^2 e^{-s}\frac{(\bos^1\cdot\bos^2)^2}{N}\biggr). 
\]
The right hand side is bounded by $const. N^{-k}$ if and only if $s$ is sufficiently large. (This is related to the fact that when $Z$ is a standard Gaussian random variable, $\ee(e^{\alpha Z^2}) < \infty$ if and only if $\alpha < 1/2$.) This completes the proof sketch. The details of the above argument are worked out in Subsection \ref{chaosgauss}. 

\subsection{Chaos in E-A model}
The proof of Theorem \ref{nochaos}, again, is based on the representation~\eqref{varsketch} of the variance of the free energy and the representation \eqref{inter} of the function $\phi$ (both of which hold for the E-A model as well). From  \eqref{inter}, it follows that there is a nonnegative random variable $U$ such that for all $t\ge 0$,
\[
\phi(t) = \phi(0)\ee(e^{-tU}).  
\]
From this and \eqref{varsketch} it follows that 
\[
\var F(\beta) = N\phi(0) \ee((1+U)^{-1}). 
\]
Next, we prove a simple analytical fact: Suppose $V$ is a nonnegative random variable and let $v := \ee((1+V)^{-1})$.  Then for any $t\ge 0$, 
\[
\ee(e^{-tV}) \ge \frac{1}{2}ve^{-t(2-v)/v}.
\] 
Using this inequality for the random variable $U$ and  the lower bound on the variance from Theorem \ref{nosuper}, it is easy to obtain the required lower bound on the function $\phi(t)$, which establishes the absence of chaos. The details of this argument are presented in Subsection \ref{nochaosproof}. 

The proof of Theorem \ref{quenchedchaos} involves a new idea. Let $\bg = (g_{ij})_{(i,j)\in E}$, and let $\bg', \bg''$ be independent copies of $\bg$. For each $t$, let 
\[
\bg^t := \est \bg + \esst \bg', \ \ \ \bg^{-t} := \est \bg + \esst \bg''.
\]
For each $t\in \rr$, let $\bos^t$ denote a configuration drawn from the Gibbs measure defined by the disorder $\bg^t$. For $t\ne s$, we assume that $\bos^t$ and $\bos^s$ are independent given $\bg, \bg', \bg''$. Define
\[
\phi(t) := \frac{1}{|E|} \sum_{(i,j)\in E}\ee\bigl(\av{\sigma_i^t\sigma_j^t}\av{\sigma_i^{-t}\sigma_j^{-t}}). 
\]
By a similar logic as in the derivation of \eqref{inter}, one can show that $\phi$ is a completely monotone function. Also, $\phi$ is bounded by $1$. Thus, for any $t > 0$,
\begin{equation}\label{pbd0}
|\phi'(t)| \le \frac{\phi(0)-\phi(t)}{t} \le \frac{1}{t}. 
\end{equation}
Now fix $t$ and let
\[
\bar{u}_{ijkl} := \ee(\av{\sigma_i^t \sigma_j^t \sigma_k^t \sigma_l^t}\mid \bg), \ \ \ \bar{v}_{ijkl} := \ee(\av{\sigma_i^t \sigma_j^t}\av{ \sigma_k^t \sigma_l^t}\mid \bg). 
\]
It turns out that
\[
\phi'(t) = -\frac{2e^{-2t} \beta^2}{|E|} \sum_{(i,j)\in E, \, (k,l)\in E} \ee((\bar{u}_{ijkl} - \bar{v}_{ijkl})^2)
\]
and 
\[
\ee\av{(Q_{\bos^t, \bos^{-t}}- \av{Q_{\bos^t, \bos^{-t}}})^2} = \frac{1}{|E|^2}\sum_{(i,j)\in E,\, (k,l)\in E} \ee(\bar{u}_{ijkl}^2 - \bar{v}_{ijkl}^2),
\]
where $Q_{\bos^t, \bos^{-t}}$ is the bond overlap between $\bos^t$ and $\bos^{-t}$. Combining these two identities with the inequality \eqref{pbd0}, it is easy to complete the proof of Theorem \ref{quenchedchaos}. The details are in Subsection \ref{quenchedproof}. 

\subsection{Chaos under discrete perturbations}
Let $\bg = (g_{ij})_{1\le i,j\le N}$, and let $\bg'$ be an independent copy of $\bg$. For any $A\subseteq \{(i,j): 1\le i,j\le n\}$, let $\bg^A$ be the array whose $(i,j)$th component is
\[
g_{ij}^A := 
\begin{cases}
g_{ij}' &\text{ if } (i,j)\in A,\\
g_{ij} &\text{ if } (i,j) \not\in A.
\end{cases}
\]
Let $F_N$ be the free energy, considered as a function of $\bg$. Suppose $\epsilon_N$ and $\delta_N$ are constants such that for all $i,j$, 
\[
\biggl|\fpar{F_N}{g_{ij}}\biggr|\le N^{1/2}\delta_N \ \ \text{ and } \ \ \biggl|\spar{F_N}{g_{ij}}\biggr|\le N^{1/2}\epsilon_N \ \ \text{almost surely.}
\]
Fix $0\le k\le N^2$, and let $A$ be a subset of~$\{(i,j): 1\le i,j\le N\}$, chosen uniformly at random from the collection of all subsets of size~$k$. Let $\bos^1$ be chosen from the Gibbs measure at inverse temperature $\beta$ defined by the disorder $\bg$, and let $\bos^2$ be drawn from the Gibbs measure defined by $\bg^A$. Let $R_{1,2}$ denote the overlap of $\bos^1$ and $\bos^2$, as usual. The key step is to prove that  for some absolute constant $C$, 
\begin{equation*}
\ee(R_{1,2}^2) \le \frac{CN}{k}\var(F_N) + CN^2\delta_N\epsilon_N. 
\end{equation*}
This inequality is the content of Theorem \ref{discgauss}. 
The proof is completed by showing that we can choose $\delta_N$ and $\epsilon_N$ such that $\delta_N \epsilon_N = o(N^{-2})$, and using the superconcentration bound (Theorem \ref{superconc}) on the variance of $F_N$. The details of the proof are given in Subsection \ref{chaosdiscproof}. 

\subsection{No superconcentration in the E-A model}
Although this result was already proven in \cite{aizenmanwehr90} for the E-A model on lattices, it may be worth sketching our argument for general bounded degree graphs here. Our proof is based on a general lower bound for arbitrary functions of Gaussian random variables. The result (Theorem \ref{varlowbd}) goes as follows: Suppose $f:\rr^n \ra \rr$ is an absolutely continuous function such that there is a version of its gradient $\nabla f$ that is bounded on bounded sets. Let $\bg$ be a standard Gaussian random vector in $\rr^n$, and suppose $\ee|f(\bg)|^2$ and $\ee|\nabla f(\bg)|^2$ are both finite. Then
\[
\var(f(\bg)) \ge \frac{1}{2}\sum_{i=1}^n \biggl(\ee\biggl(g_i \fpar{f}{g_i}\biggr)\biggr)^2 \ge \frac{1}{2n}\bigl(\ee(\bg \cdot \nabla f(\bg))\bigr)^2,
\]
where $\bx \cdot \by$ denotes the usual inner product on $\rr^n$. We apply this result to the Gaussian vector $\bg = (g_{ij})_{(i,j)\in E}$, taking the function $f(\bg)$ to be the free energy $F(\beta)$. A few tricks are required to get a lower bound on the right hand side that does not blow up as $\beta \ra \infty$. 

Incidentally, the above lower bound on the variance of Gaussian functionals is based on a multidimensional Plancherel formula that may be of independent interest:
\begin{equation}\label{planform}
\var(f(\bg)) = \sum_{k=1}^\infty \frac{1}{k!}\sum_{1\le i_1,\ldots,i_k\le n} \biggl(\ee\biggl(\frac{\partial^k f}{\partial g_{i_1}\cdots \partial g_{i_k}}\biggr)\biggr)^2. 
\end{equation}
Versions of this formula have been previously derived in the literature using expansions with respect to the multivariate orthogonal Hermite polynomial basis (see Subsection \ref{plancherel} for references). We give a different  proof avoiding the use of the orthogonal basis.

\section{General results about Gaussian fields and proofs}\label{proofs}
The results of Section \ref{intro} are applications of some general theorems about Gaussian fields. These are presented in this section, together with the proofs of the theorems of Section \ref{intro}. Unlike the previous sections, we proceed according to the theorem-proof format in the rest of the paper. 

\subsection{Chaos in Gaussian fields}\label{chaosgauss}
Let $S$ be a finite set and let $\bbx = (X_i)_{i\in S}$ be a centered Gaussian random vector. Let 
\[
\rho_{ij} := \cov(X_i, X_j).
\]
Let $\bbx'$ be an independent copy of $\bbx$, and for each $t\ge 0$, let
\[
\bbx^t := \est \bbx + \esst \bbx'. 
\]
Fix $\beta \ge 0$. For each $t,s\ge 0$, define a probability measure $G_{t,s}$ on $S\times S$ that assigns mass
\[
\frac{e^{\beta X_i^t + \beta X_j^s}}{\sum_{k,l}e^{\beta X_k^t + \beta X_l^s}}
\]
to the point $(i,j)$, for each $(i,j)\in S\times S$. The average of a function $h:S\times S \ra \rr$ under the measure $G_{t,s}$ will be denoted by $\smallavg{h}_{t,s}$, that is, 
\[
\av{h}_{t,s} := \frac{\sum_{i,j} h(i,j) e^{\beta X_i^t +\beta  X_j^s}}{\sum_{i,j}e^{\beta X_i^t + \beta X_j^s}}.
\]
We will consider the covariance kernel $\rho$ as a function on $S\times S$, defined as $\rho(i,j) := \rho_{ij}$. Alternatively, it will also be considered as a square matrix. 
\begin{thm}\label{main}
Assume that $\rho_{ij}\ge 0$ for all $i,j$. For each $i$, let
\[
\nu_i := \ee\biggl(\frac{e^{\beta X_i}}{\sum_j e^{\beta X_j}}\biggr). 
\]
Let $\phi(x) = \sum_{k=0}^\infty c_k x^k$ be any convergent power series on $[0,\infty)$ all of whose coefficients are nonnegative. Then for each $t\ge 0$,
\[
\ee\av{\phi\circ \rho}_{0,t} \le \inf_{s\ge t} \bigl(\ee\av{\phi\circ \rho}_{0,0}\bigr)^{1-t/s} \biggl(\sum_{i,j} \phi(\rho_{ij}) e^{2\beta^2 e^{-s}\rho_{ij}}\nu_i \nu_j\biggr)^{t/s}. 
\]
Moreover, $\ee\av{\phi\circ \rho}_{0,t}$ is a decreasing function of $t$. 
\end{thm}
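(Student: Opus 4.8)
The plan is to reduce everything to properties of the single real-valued function $\psi(t):=\ee\av{\phi\circ\rho}_{0,t}$, and more generally of the functions $\Psi_m(t):=\sum_{i,j}\rho_{ij}^{m}\phi(\rho_{ij})\,\ee\bigl(p_i(\bbx^0)p_j(\bbx^t)\bigr)$ for integers $m\ge 0$, where $p_i(\bx):=e^{\beta x_i}\big/\sum_k e^{\beta x_k}$; these are exactly the quantities $\ee\av{h}_{0,t}$ for $h(i,j)=\rho_{ij}^m\phi(\rho_{ij})$ (the power series $x^m\phi(x)$ again has nonnegative coefficients), and $\psi=\Psi_0$, since conditionally on the Gaussians the measure $G_{0,t}$ is the product of the $\bbx^0$-Gibbs measure and the $\bbx^t$-Gibbs measure. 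Three observations get recorded first: (i) $\psi(0)<\infty$ (a finite sum of bounded terms); (ii) $\bbx^0$ and $\bbx'$ are independent, so $\Psi_m(\infty)=\lim_{t\to\infty}\Psi_m(t)=\sum_{i,j}\rho_{ij}^{m}\phi(\rho_{ij})\nu_i\nu_j$; and (iii) the matrix $\Phi:=(\phi(\rho_{ij}))_{i,j\in S}$ is positive semidefinite — $\rho$ is a covariance matrix, its Hadamard powers $\rho^{\circ k}$ are positive semidefinite by the Schur product theorem, and $\phi$ has nonnegative coefficients.

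First I would prove complete monotonicity of $\psi$, which immediately gives the final assertion. Factor $\Phi=B^{T}B$ and set $v_m(\bx):=(Bp(\bx))_m$, a bounded function of $\bx$. Then $\psi(t)=\ee\bigl(p(\bbx^0)^{T}\Phi\,p(\bbx^t)\bigr)=\sum_m\ee\bigl(v_m(\bbx^0)v_m(\bbx^t)\bigr)=\sum_m\langle v_m,P_tv_m\rangle_{L^2(\gamma)}$, where $\gamma$ is the law of $\bbx$ and $P_t$ is the Ornstein--Uhlenbeck (Mehler) semigroup of $\gamma$, for which $\ee\bigl(u(\bbx^0)u(\bbx^t)\bigr)=\langle u,P_tu\rangle$. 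Since $P_t=e^{-t\mathcal L}$ with $\mathcal L\ge 0$ self-adjoint, the spectral theorem gives $\langle v_m,P_tv_m\rangle=\int_0^\infty e^{-xt}\,d\mu_m(x)$ with $\mu_m\ge 0$, so each summand, hence $\psi$, is completely monotone; in particular $\psi$ is nonincreasing. (Alternatively one differentiates $\psi$ repeatedly and checks $(-1)^j\psi^{(j)}\ge 0$ by Gaussian integration by parts.) By Bernstein's theorem $\psi(t)=\int_0^\infty e^{-xt}\,d\mu(x)$ for a positive measure $\mu$ of total mass $\psi(0)$, whence by Hölder's inequality, for $0\le t\le s$,
\[
\psi(t)=\int_0^\infty\bigl(e^{-xs}\bigr)^{t/s}\,d\mu(x)\le\Bigl(\int_0^\infty d\mu\Bigr)^{1-t/s}\Bigl(\int_0^\infty e^{-xs}\,d\mu\Bigr)^{t/s}=\psi(0)^{1-t/s}\psi(s)^{t/s}.
\]

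Next I would bound $\psi(s)$ by a chain of differential inequalities. The only $t$-dependence in the joint law of $(\bbx^0,\bbx^t)$ sits in $\cov(X_i^0,X_j^t)=e^{-t}\rho_{ij}$, so Gaussian integration by parts gives, for every $m$,
\[
\Psi_m'(t)=-\beta^2 e^{-t}\,\ee\av{\rho_{i_1j_1}^{m}\phi(\rho_{i_1j_1})\bigl(\rho_{i_1j_1}+\rho_{i_2j_2}-\rho_{i_1j_2}-\rho_{i_2j_1}\bigr)}_{0,t},
\]
the average being over two independent replicas $(i_1,j_1),(i_2,j_2)\sim G_{0,t}$. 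Since $\rho\ge 0$ the two mixed terms are nonpositive; and since $x\mapsto x^{m}\phi(x)$ and $x\mapsto x$ are both nondecreasing on $[0,\infty)$, Chebyshev's correlation inequality (applied to the product Gibbs measure, for each fixed realization of the Gaussians) gives $\ee\av{\rho_{i_1j_1}^{m}\phi(\rho_{i_1j_1})\,\rho_{i_2j_2}}_{0,t}\le\ee\av{\rho_{i_1j_1}^{m+1}\phi(\rho_{i_1j_1})}_{0,t}=\Psi_{m+1}(t)$. Hence $-\Psi_m'(t)\le 2\beta^2 e^{-t}\Psi_{m+1}(t)$. Integrating from $s$ to $\infty$ and iterating this chain $M$ times over the simplex $s<t_1<\cdots<t_M$ — whose measure against $\prod_i 2\beta^2 e^{-t_i}\,dt_i$ is $(2\beta^2 e^{-s})^{M}/M!$ — while using $\Psi_m(\infty)=\sum_{i,j}\rho_{ij}^{m}\phi(\rho_{ij})\nu_i\nu_j$ and the remainder estimate $\Psi_M(t)\le\Psi_M(0)\le(\max_{i,j}\rho_{ij})^{M}\psi(0)$, one arrives at
\[
\psi(s)\le\sum_{m=0}^\infty\frac{(2\beta^2 e^{-s})^{m}}{m!}\sum_{i,j}\rho_{ij}^{m}\phi(\rho_{ij})\nu_i\nu_j=\sum_{i,j}\phi(\rho_{ij})\,e^{2\beta^2 e^{-s}\rho_{ij}}\,\nu_i\nu_j.
\]

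Combining the two displays, $\psi(t)\le\psi(0)^{1-t/s}\bigl(\sum_{i,j}\phi(\rho_{ij})e^{2\beta^2 e^{-s}\rho_{ij}}\nu_i\nu_j\bigr)^{t/s}$ for all $s\ge t$, and taking the infimum over such $s$ yields the theorem. The main obstacle, I expect, is the third step — not the integration by parts itself but seeing that one must iterate the differential inequality over the simplex rather than crudely estimate $\Psi_{m+1}(t)\le\Psi_{m+1}(s)$: only the simplex integration supplies the factorials $1/m!$ that reassemble into the exponential $e^{2\beta^2 e^{-s}\rho_{ij}}$ (the crude estimate would give only the strictly weaker $(1-2\beta^2 e^{-s}\rho_{ij})^{-1}$). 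The hypothesis $\rho\ge 0$ is used precisely twice: to drop the mixed terms in $\Psi_m'$, and to make $x^m\phi(x)$ nondecreasing so that Chebyshev's inequality applies.
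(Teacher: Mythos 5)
Your proof is correct and follows the same broad strategy as the paper's: establish complete monotonicity of $t\mapsto\ee\av{\phi\circ\rho}_{0,t}$ so that Bernstein's theorem and H\"older give the interpolation $\psi(t)\le\psi(0)^{1-t/s}\psi(s)^{t/s}$, then bound the endpoint $\psi(s)$ by integrating a chain of differential inequalities whose simplex accounting reassembles the factorials into an exponential. Where the two differ is in the machinery at each stage. For complete monotonicity, the paper's Lemma~\ref{complete} shows by a hands-on induction (using Lemma~\ref{basic} and a covariance factorization) that a certain class of functions is closed under $h\mapsto -h'$; you instead identify each summand $\ee(v_m(\bbx^0)v_m(\bbx^t))$ as an Ornstein--Uhlenbeck inner product $\langle v_m,P_tv_m\rangle$ and read off the Laplace-transform representation directly from the spectral theorem, which is cleaner and makes the phenomenon transparent. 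For the high-temperature endpoint, the paper's Lemma~\ref{hightemp} works monomial by monomial with $f_r(t)=\ee\sum\rho_{ij}^r p_i^{-t}p_j^t$, dominates the quartic replica term by a power-mean H\"older inequality, and only sums over $r$ at the very end; you insert $\phi$ from the start into $\Psi_m$, drop the mixed replica terms using $\rho\ge0$, and bound $\av{h_m}\av{\rho}\le\av{h_m\rho}$ by Chebyshev's correlation inequality for two nondecreasing functions of the scalar $\rho_{i_1j_1}$, which handles the general weight $h_m=\rho^m\phi(\rho)$ in one pass and avoids the final resummation over powers. Your direct simplex iteration and the paper's change of variable $u=e^{-2t}$ followed by an absolute-monotonicity power-series expansion are two bookkeepings for the same telescoping, and the factor $e^{-t}$ in your derivative (vs.\ the paper's $e^{-2t}$) just reflects the $(0,t)$ vs.\ $(-t,t)$ parametrization, reconciled by the equality in law of $(\bbx^{-t/2},\bbx^{t/2})$ and $(\bbx^0,\bbx^t)$. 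Neither route yields sharper constants, but your write-up is somewhat more streamlined.
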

Roughly, the way to apply this theorem is the following: prove that the right hand side is small for some large $t$ using high temperature methods, and then use the infimum to show that the smallness persists for small $t$ as well.

Since the application of Theorem \ref{main} to the S-K model seems to yield a suboptimal result (Theorem \ref{chaoscont}), one can question whether Theorem \ref{main} can ever give sharp bounds. In Subsection~\ref{rem} we settle this issue by  showing that Theorem \ref{main} gives a  sharp result for Derrida's Random Energy Model. 

Let us now proceed to prove Theorem \ref{main}. In the following, $C^\infty_b(\rr^S)$ will denote the set of all infinitely differentiable real-valued functions on $\rr^S$ with bounded derivatives of all orders.

Let us first extend the definition of $\bbx^t$ to negative $t$. This is done quite simply. Let $\bbx''$ be another independent copy of $\bbx$ that is also independent of $\bbx'$, and for each $t\ge 0$, let
\[
\bbx^{-t} := \est \bbx + \esst \bbx''.
\]
Let us now recall Gaussian integration by parts: If $f:\rr^S \ra \rr$ is an absolutely continuous function such that $|\nabla f(\bbx)|$ has finite expectation, then for any $i\in S$,
\[
\ee(X_i f(\bbx)) = \sum_{j\in S} \rho_{ij} \ee(\partial_j f(\bbx)),
\]
where $\partial_j f$ denotes the partial derivative of $f$ along the $j$th coordinate (see e.g.\ \cite{talagrand03}, Appendix A.6). The following lemma is simply a reformulated version of the above identity. 
\begin{lmm}\label{basic}
For any $f\in C^\infty_b(\rr^S)$, we have
\begin{equation*}
\frac{d}{dt}\ee(f(\bbx^{-t})f(\bbx^t)) = -2e^{-2t} \sum_{i,j} \rho_{ij} \ee\bigl(\partial_i f(\bbx^{-t}) \partial_j f(\bbx^t)\bigr).
\end{equation*}
\end{lmm}
\begin{proof}
For each $t\ge 0$, define 
\[
\bby^t := \esst \bbx - \est \bbx', \ \ \ \bby^{-t} := \esst \bbx - \est \bbx''. 
\]
A simple computation gives
\begin{align*}
&\frac{d}{dt} \ee(f(\bbx^{-t}) f(\bbx^t)) \\
&= -\frac{\est}{\esst}\ee\bigl((\bby^{-t}\cdot \nabla f(\bbx^{-t}))f(\bbx^t) + (\bby^{t}\cdot \nabla f(\bbx^{t})) f(\bbx^{-t})\bigr)\\
&= -\frac{2\est}{\esst} \ee((\bby^{-t}\cdot \nabla f(\bbx^{-t})) f(\bbx^{t})).
\end{align*}
(Note that issues like moving derivatives inside expectations are easily taken care of due to the assumption that $f\in C^\infty_b$.) 
One can verify by computing covariances that $\bby^{-t}$ and the pair $(\bbx^{-t}, \bbx')$ are independent. Moreover,
\[
\bbx^{t} = e^{-2t} \bbx^{-t} + \est\esst \bby^{-t} + \esst \bbx'. 
\]
So for any $i$, Gaussian integration by parts gives 
\begin{align*}
\ee\bigl(Y_i^{-t} \partial_i f(\bbx^{-t}) f(\bbx^{t})\bigr) &= \est\esst \sum_j \rho_{ij} \ee\bigl(\partial_i f(\bbx^{-t}) \partial_j f(\bbx^{t})\bigr).
\end{align*}
The proof is completed by combining the last two steps.
\end{proof}
Our next lemma is the most crucial component of the whole argument. It gives a way of extrapolating high temperature results to the low temperature regime.  
\begin{lmm}\label{complete}
Let $\mf$ be the class of all functions $h$ on $[0,\infty)$ that can be expressed as
\[
h(t) = \sum_{i=1}^m e^{-c_i t} \ee(f_i(\bbx^{-t}) f_i(\bbx^t))
\]
for some nonnegative integer $m$ and nonnegative real numbers $c_1, c_2,\ldots, c_m$, and functions $f_1,\ldots,f_m$ in $C^\infty_b(\rr^S)$.  For any $h\in \mf$,  there is a probability measure $\mu$ on $[0,\infty)$ such that for each $t\ge 0$,
\[
h(t) = h(0)\int_{[0,\infty)} e^{-xt} d\mu(x). 
\]
In particular, for any $0< t\le s$,
\[
h(t)\le h(0)^{1-t/s} h(s)^{t/s}.
\]
\end{lmm}
\begin{proof}
Note that any $h\in \mf$ must necessarily be a nonnegative function, since $\bbx^{-t}$ and $\bbx^t$ are independent and identically distributed conditional on $\bbx$, which gives
\[
\ee(f(\bbx^{-t})f(\bbx^t)) = \ee\bigl((\ee(f(\bbx^t)\mid \bbx))^2\bigr). 
\]
Now, if $h(0)=0$, then $h(t)=0$ for all $t$, and there is nothing to prove. So let us assume $h(0) > 0$.

Since $\rho$ is a positive semidefinite matrix, there is a square matrix $C$ such that $\rho = C^T C$. Thus, given a function $f$, if we define
\[
g_i := \sum_j C_{ij} \partial_j f,
\]
then by Lemma \ref{basic} we have
\[
\frac{d}{dt} \ee(f(\bbx^{-t}) f(\bbx^t)) = -2e^{-2t}\sum_i \ee(g_i(\bbx^{-t}) g_i(\bbx^t)). 
\]
From this observation and the definition of $\mf$, it follows easily that if $h\in \mf$, then $-h' \in \mf$. Proceeding by induction, we see that for any $k$, $(-1)^k h^{(k)}$ is a nonnegative function (where $h^{(k)}$ denotes the $k$th derivative of $h$). Such functions on $[0,\infty)$  are called `completely monotone'. The most important property of completely monotone functions (see e.g.\ Feller \cite{feller71}, Vol. II, Section XIII.4) is that any such function $h$ can be represented as the Laplace transform of a positive Borel measure $\mu$ on $[0,\infty)$, that is, 
\[
h(t) = \int_{[0,\infty)} e^{-x t} d\nu(x). 
\]
Moreover, $h(0)= \nu(\rr)$. By taking $\mu(dx) = h(0)^{-1}\nu(dx)$, this proves the first assertion of the theorem. For the second, note that by H\"older's inequality, we have that for any $0< t\le s$,
\[
\int_{\rr} e^{-xt} d\mu(x) \le \biggl(\int_{\rr} e^{-xs}d\mu(x)\biggr)^{t/s} = (h(s)/h(0))^{t/s}.
\]
This completes the proof. 
\end{proof}
The next lemma is obtained by a variant of the Gaussian interpolation methods for analyzing mean field spin glasses at high temperatures. It is similar to R.\ Lata\l a's unpublished proof of the replica symmetric solution of the S-K model (to appear in the new edition of \cite{talagrand03}). 
\begin{lmm}\label{hightemp}
Let $\phi$ and $\nu_i$ be as in Theorem \ref{main}. Then for each $t\ge 0$,
\[
\ee\av{\phi\circ \rho}_{-t,t} \le \sum_{i,j} \phi(\rho_{ij}) e^{2\beta^2 e^{-2t}\rho_{ij}}\nu_i \nu_j. 
\]
\end{lmm}
\begin{proof}
For each $i$, define a function $p_i:\rr^S \ra \rr$ as 
\[
p_i(\bx) := \frac{e^{\beta x_i}}{\sum_j e^{\beta x_j}}. 
\]
Note that
\[
\partial_j p_i = \beta(p_i \delta_{ij} - p_ip_j),
\]
where $\delta_{ij} = 1$ if $i = j$ and $0$ otherwise. Since $p_i$ is bounded, this proves in particular that $p_i\in C^\infty_b (\rr^S)$.

Take any nonnegative integer $r$. Since $\rho = (\rho_{ij})$ is a positive semidefinite matrix, so is $\rho^{(r)} := (\rho_{ij}^r)$. (To see this, just note that $\bbx^1,\ldots, \bbx^r$ are independent copies of $\bbx$, then $\cov(X_i^1\cdots X_i^r,  X_j^1\cdots X_j^r) = \rho_{ij}^r$.) 
Therefore there exists a matrix $C^{(r)} = (C^{(r)}_{ij})$ such that $\rho^{(r)} = (C^{(r)})^T C^{(r)}$. Define the functions 
\[
h_i := \sum_j C_{ij}^{(r)} p_j, \ \ \ i\in S. 
\]
In the following we will denote $p_i(\bbx^s)$ and $h_i(\bbx^s)$ by $p_i^s$ and $h_i^s$ respectively, for all $s\in \rr$. Let 
\begin{align*}
f_r(t) &:= \ee\biggl(\sum_{i,j} \rho_{ij}^r p_i^{-t} p_j^t \biggr) = \ee\biggl(\sum_i h_i^{-t} h_i^t\biggr).
\end{align*}
By Lemma \ref{basic} we get
\begin{align*}
f_r'(t) &= -2e^{-2t}\sum_{i} \sum_{k,l} \rho_{kl}\ee\bigl(\partial_k h_i^{-t} \partial_l h_i^t\bigr)\\
&= -2\beta^2e^{-2t} \sum_{i,k,l} \rho_{kl} \ee\biggl(\biggl(C_{ik}^{(r)}p_k^{-t} - \sum_{j} C_{ij}^{(r)}p_j^{-t} p_k^{-t}\biggr)\\
&\hskip2in \times \biggl(C_{il}^{(r)}p_l^{t} - \sum_{j} C_{ij}^{(r)}p_j^{t} p_l^{t}\biggr) \biggr)\\
&= -2\beta^2e^{-2t} \ee\biggl(\sum_{k,l}\rho_{kl}^{r+1} p_k^{-t} p_l^{t} -  \sum_{j,k,l} \rho_{kl} \rho_{jl}^r p_j^{-t} p_k^{-t} p_l^t \\
&\hskip1in - \sum_{j,k,l} \rho_{kl} \rho_{kj}^r p_k^{-t} p_j^t p_l^t + \sum_{j,k,l,m} \rho_{kl} \rho_{jm}^r p_j^{-t} p_k^{-t} p_m^t p_l^t\biggr). 
\end{align*}
Our objective is to get a lower bound for $f_r'(t)$. For this, we can delete the two middle terms in the above expression because they contribute a positive amount. For the fourth term, note that by H\"older's inequality,
\begin{align*}
&\biggl(\sum_{k,l}\rho_{kl}p_k^{-t} p_l^t\biggr)\biggl(\sum_{j,m} \rho_{jm}^r p_j^{-t} p_m^t\biggr) \\
&\le \biggl(\sum_{k,l}\rho_{kl}^{r+1}p_k^{-t} p_l^t\biggr)^{\frac{1}{r+1}} \biggl(\sum_{j,m} \rho_{jm}^{r+1} p_j^{-t} p_m^t\biggr)^{\frac{r}{r+1}} = \sum_{k,l} \rho_{kl}^{r+1} p_k^{-t} p_l^t.
\end{align*}
Thus, by Lemma \ref{complete} and the above inequalities, we have 
\begin{equation}\label{diffineq1}
0\ge f_r'(t) \ge -4\beta^2 e^{-2t} f_{r+1}(t).
\end{equation}
Now let $g_r(u) := f_r(-\log \sqrt{u})$ for $0 < u< 1$.  Then
\[
g_r'(u) = -\frac{f_r'(-\log \sqrt{u}) }{2u}. 
\]
The inequality \eqref{diffineq1} simply becomes
\begin{equation}\label{diffineq2}
0\le g_r'(u) \le 2\beta^2g_{r+1}(u).
\end{equation}
Fix $0< u< 1$, $r\ge 1$. For each $m \ge 1$, let
\begin{align*}
T_m &:= \int_0^u\int_0^{u_1} \cdots \int_0^{u_{m-1}}
(2\beta^2)^{m-1}g'_{r+m-1}(u_m)du_m du_{m-1} \cdots du_1.
\end{align*}
Using (\ref{diffineq2}), we see that 
\begin{align*}
&0\le T_m \le \int_0^u\int_0^{u_1} \cdots \int_0^{u_{m-1}}
(2\beta^2)^m g_{r+m}(u_m)  du_m du_{m-1} \cdots du_1  \\
&= \int_0^u \cdots \int_0^{u_{m-1}}
(2\beta^2)^m \biggl(g_{r+m}(0) +  \int_0^{u_m}
g'_{r+m}(u_{m+1})du_{m+1} \biggr) du_m  \cdots
du_1  \\ 
&= \frac{(2\beta^2)^m g_{r+m}(0)u^m}{m!} + T_{m+1}.
\end{align*}
Inductively, this implies that for any $m \ge 1$,
\[
g_r(u) = g_r(0) + T_1 \le \sum_{l=0}^m \frac{g_{r+l}(0) (2\beta^2 u)^l}{l!}
+ T_{m+1}.
\]
Again, for any $m \ge 2$, 
\begin{align*}
0\le T_m &\le \int_0^u\int_0^{u_1} \cdots \int_0^{u_{m-1}}
(2\beta^2)^m g_{r+m}(u_m)  du_m du_{m-1} \cdots du_1  \\
&\le \frac{M^{r+m} (2\beta^2u)^{m}}{m!}
\end{align*}
where $M = \max_{i,j} \rho_{ij}$.
Thus, $\lim_{m\ra \infty}T_m = 0$. Finally, observe that $p_i^\infty$ and $p_i^{-\infty}$ are independent. This implies that for any $m$,
\[
g_m(0) = f_m(\infty) = \sum_{i,j} \rho_{ij}^m\nu_i \nu_j.
\] 
Combining, we conclude that
\begin{equation*}\label{super}
\begin{split}
g_r(u) &\le \sum_{l=0}^\infty \frac{g_{r+l}(0)(2\beta^2u)^l}{l!} 
=\sum_{i,j} \rho_{ij}^r e^{2\beta^2 u\rho_{ij}}\nu_i \nu_j.
\end{split}
\end{equation*}
The result now follows easily by taking $u = e^{-2t}$ and summing over $r$, using the fact that $\phi$ has nonnegative coefficients in its power series. 
\end{proof}
\begin{proof}[Proof of Theorem \ref{main}]
Let $p_i^t$ be as in the proof of Lemma \ref{hightemp}. 
As noted in the proof of Lemma \ref{hightemp}, the matrix $(\rho_{ij}^r)$ is positive semidefinite for every nonnegative integer $r$. Since $\phi$ has nonnegative coefficients in its power series, it follows that the matrix $\Phi := (\phi(\rho_{ij}))$ is also positive semidefinite. Let $C=(C_{ij})$ be a matrix such that $\Phi = C^T C$. Then
\[
\av{\phi\circ \rho}_{-t,t} = \sum_{i,j} \phi(\rho_{ij}) p_i^{-t}p_j^t = \sum_i \biggl(\sum_j C_{ij} p_j^{-t}\biggr) \biggl(\sum_j C_{ij} p_j^{t}\biggr). 
\]
Therefore, the function
\[
h(t) :=\ee \av{\phi\circ \rho}_{-t,t}
\]
belongs to the class $\mf$ of Lemma \ref{complete}. The proof is now finished by using Lemma \ref{complete} and Lemma \ref{hightemp}, and the observation that $h(t/2) = \ee\av{\phi\circ \rho}_{0, t}$ (since $(\bbx^{-t/2}, \bbx^{t/2})$ has the same law as $(\bbx^0, \bbx^{t})$). The claim that $h(t)$ is a decreasing function of $t$ is automatic because $h'\le 0$. 
\end{proof}

\subsection{Proof of Theorem \ref{chaoscont}}
We are now ready to give a proof of Theorem~\ref{chaoscont} using Theorem~\ref{main}. In fact, we shall prove a slightly general result below, which also covers the case of $p$-spin models for even $p$, as well as further generalizations. 

Let $N$ be a positive integer and suppose $(H_N(\bos))_{\bos\in \{-1,1\}^N}$ is a centered Gaussian random vector with 
\[
\cov(H_N(\bos), H_N(\bos')) = N \xi(R_{\bos,\bos'}),
\]
where $\xi$ is some function on $[-1,1]$ that does not depend on $N$ and $R_{\bos,\bos'}$ is the overlap defined in \eqref{r12}. Let us fix $\beta \ge 0$. The Hamiltonian $H_N$ defines a Gibbs measure on $\{-1,1\}^N$ by putting mass proportional to $e^{-\beta H_N(\bos)}$ at each configuration~$\bos$. This class of models was considered by Talagrand \cite{talagrand06} in his proof of the generalized Parisi formula. For the S-K model, $\xi(x) = x^2/2$, while for the $p$-spin models, $\xi(x)=x^p/p!$. (We refer to Chapter 6 in \cite{talagrand03} for the definition of the $p$-spin models and related discussions.)

Let $H_N'$ be an independent copy of $H_N$, and for each $t\ge 0$, let
\[
H_N^t := \est H_N + \esst H_N'. 
\]
Given a function $h$ on $\{-1,1\}^N \times \{-1,1\}^N$, we define the average $\av{h(\bos, \bos')}_{t,s}$ as the average with respect to the product of the Gibbs measures defined by $H_N^t$ and $H_N^s$, that is, 
\[
\av{h(\bos, \bos')}_{t,s} := \frac{\sum_{\bos,\bos'} h(\bos, \bos') e^{-\beta H_N^t(\bos) - \beta H_N^s(\bos')}}{\sum_{\bos,\bos'}  e^{-\beta H_N^t(\bos) - \beta H_N^s(\bos')}}.
\]
The following result establishes the presence of chaos in this class of models under some restrictions on $\xi$. It is easy to see that the result covers all $p$-spin models for even $p$, and in particular, the original SK model. 
\begin{thm}\label{pspin}
Suppose $\xi$ is nonnegative, $\xi(1) = 1$ and there is a constant $c$ such that $\xi(x) \le c x^2$ for all $x\in [-1,1]$. Then there is a constant $C$ depending only on $c$ such that for all $t\ge 0$ and $\beta > 1$, and any positive integer $k$,
\[
\ee\av{(\xi(R_{\bos,\bos'}))^k}_{0,t} \le (Ck)^k N^{-k\min\{1, \;t/C\log(1+C \beta)\}}.
\]
\end{thm}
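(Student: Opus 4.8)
The plan is to apply Theorem \ref{main} with the finite index set $S = \{-1,1\}^N$, the Gaussian field $X_\bos := -H_N(\bos)$ (so that $\rho_{\bos,\bos'} = N\xi(R_{\bos,\bos'})$), and the polynomial $\phi(x) = (x/N)^k$, which has nonnegative coefficients. The nonnegativity hypothesis $\rho_{\bos,\bos'} = N\xi(R_{\bos,\bos'}) \ge 0$ is exactly the assumption that $\xi \ge 0$, so Theorem \ref{main} applies. Its conclusion gives, for every $s \ge t$,
\[
\ee\av{(\xi(R_{\bos,\bos'}))^k}_{0,t} \le \bigl(\ee\av{(\xi(R_{\bos,\bos'}))^k}_{0,0}\bigr)^{1-t/s}\biggl(\sum_{\bos,\bos'}(\xi(R_{\bos,\bos'}))^k e^{2\beta^2 e^{-s} N\xi(R_{\bos,\bos'})}\nu_\bos \nu_{\bos'}\biggr)^{t/s}.
\]
The first factor is at most $1$ since $\xi(R_{\bos,\bos'}) \le \xi(1) = 1$ (or more crudely $\le 1$ from $\xi(x) \le cx^2 \le c$ — one should check $\xi(1)=1$ is used here to keep it $\le 1$). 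So everything reduces to bounding the ``high-temperature'' sum on the right for a suitable choice of $s$.

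The key step is to estimate $B(s) := \sum_{\bos,\bos'}(\xi(R_{\bos,\bos'}))^k e^{2\beta^2 e^{-s} N\xi(R_{\bos,\bos'})}\nu_\bos\nu_{\bos'}$ using the bound $\xi(x) \le cx^2$. Writing $r = R_{\bos,\bos'}$, we have $(\xi(r))^k e^{2\beta^2 e^{-s}N\xi(r)} \le (cr^2)^k e^{2c\beta^2 e^{-s}Nr^2}$. Since $\sum_\bos \nu_\bos = 1$, the weighted sum $\sum_{\bos,\bos'}\nu_\bos\nu_{\bos'}(\cdot)$ is an average, and I would bound it by the uniform average: $\nu_\bos \le 1$ is not enough, but one can instead dominate $B(s) \le \max_{\bos}\bigl(\sum_{\bos'}(c R_{\bos,\bos'}^2)^k e^{2c\beta^2 e^{-s}N R_{\bos,\bos'}^2}\nu_{\bos'}\bigr)$ and then, for fixed $\bos$, compare the distribution of $R_{\bos,\bos'}$ under $\nu_{\bos'}$ with the uniform distribution — or, more simply, just use $\nu_{\bos'} \le$ (something). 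The cleanest route is: $NR_{\bos,\bos'}^2$ under the uniform measure on $\bos'$ behaves like a squared standard Gaussian (sum of $\pm1$'s), so $2^{-N}\sum_{\bos'}e^{\lambda N R_{\bos,\bos'}^2}$ is finite and bounded uniformly in $N$ precisely when $\lambda < 1/2$, and $2^{-N}\sum_{\bos'}(NR^2)^k e^{\lambda NR^2} \le (Ck)^k$ for such $\lambda$ by standard Gaussian moment estimates (this is the remark in the sketch about $\ee(e^{\alpha Z^2}) < \infty$ iff $\alpha < 1/2$). To pass from the $\nu$-weighted average to the uniform one I expect to need either a convexity/Jensen argument or the a priori bound that the $\nu_\bos$ are not too concentrated; alternatively one can absorb the $\nu$'s by noting $\sum_{\bos,\bos'}\nu_\bos\nu_{\bos'}f(R_{\bos,\bos'}) \le 2^{-2N}\sum_{\bos,\bos'}f(R_{\bos,\bos'})\cdot(2^N\max_\bos \nu_\bos)^2$ is too lossy, so the Gaussian-comparison route is the one I would pursue.

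Choosing $s$ so that $2c\beta^2 e^{-s} = 1/4$, i.e. $s = \log(8c\beta^2)$, makes $\lambda = 1/4 < 1/2$, giving $B(s) \le (Ck)^k \cdot 2^{-2N}\cdot 2^{2N}\cdot (\text{normalization})$ — here I must be careful: $2^{-2N}\sum_{\bos,\bos'}(NR^2)^k e^{NR^2/4}$ is $(Ck)^k \cdot 2^{2N}$ relative to the normalized average which is $(Ck)^k$; but $\sum_{\bos,\bos'}\nu_\bos\nu_{\bos'}(\cdots)$ is already normalized, so after the Gaussian comparison one gets $B(s) \le (Ck)^k N^{-k}$ because $\phi(x)=(x/N)^k$ carries an extra $N^{-k}$, matching the power series choice — equivalently, working with $\xi(R)^k$ directly: $B(s)\le (Ck)^k N^{-k}$ once the $\phi$ normalization is tracked, wait, here $\xi(R)^k \le (cR^2)^k$ and $R^2 \le NR^2/N$, so indeed there is an $N^{-k}$. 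Plugging $s=\log(8c\beta^2)$ back, the exponent $t/s$ becomes $t/\log(8c\beta^2) \asymp t/\log(1+C\beta)$, yielding $\ee\av{\xi(R)^k}_{0,t} \le (Ck)^k N^{-kt/s}$; capping by the trivial bound $\xi(R)^k \le 1$ when $t/s \ge 1$ gives the stated $N^{-k\min\{1,\,t/C\log(1+C\beta)\}}$.

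The main obstacle I anticipate is the passage from the $\nu$-weighted average $\sum_{\bos,\bos'}\nu_\bos\nu_{\bos'}(\cdots)$ to a uniform (annealed) average over $\{-1,1\}^N \times \{-1,1\}^N$, since the $\nu_\bos$ are themselves complicated expectations of Gibbs weights and could in principle be concentrated. One escape is to observe that Theorem \ref{main} is stated with the $\nu_i$ precisely because the $\infty$-time endpoint of the chain-of-inequalities argument (Lemma \ref{hightemp}) produces $\nu_i\nu_j$, and that at $\beta > 1$ one does not have good control of $\nu_\bos$; but for an \emph{upper} bound of the form we want, the correct move is to use $\xi(R)^k e^{\lambda NR^2}$ being a sum over $\bos'$ of terms controlled by symmetry of the hypercube and the fact that $\sum_{\bos'}\nu_{\bos'}\, e^{\lambda N R_{\bos,\bos'}^2}$, after Gaussian integration by parts or a direct convexity estimate, is dominated by its value when $\nu$ is replaced by the uniform measure — this requires a short lemma (roughly: for fixed $\bos$, $r\mapsto$ the conditional law of $R_{\bos,\bos'}$ under any probability $\nu$ is dominated in the convex order by something Gaussian-like, or simply that $e^{\lambda N r^2}$ summed against \emph{any} probability mass function on $\{-1,1\}^N$ is at most $\max_{\bos'}e^{\lambda N R_{\bos,\bos'}^2} = e^{\lambda N}$, which is too big). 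So the genuinely delicate point is getting a $\beta$-\emph{uniform}, $N$-\emph{good} bound on $B(s)$ at the chosen $s$; I expect this is handled by choosing $s$ large enough ($s = C\log(1+C\beta)$) that $2c\beta^2 e^{-s} \le 1/4$ with room to spare, and then invoking a clean Gaussian-type concentration bound for $NR^2$ under the uniform measure together with a comparison showing the $\nu$-average is no larger — a comparison that holds because, conditionally on the disorder, $\bos^{-\infty}$ and $\bos^{\infty}$ are i.i.d. uniform, which is exactly where the $\nu_i$'s came from and why the worst case is the uniform one.
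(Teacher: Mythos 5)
Your overall strategy is exactly the paper's: apply Theorem \ref{main} with $\phi(x)=(x/N)^k$ to obtain the interpolation bound, estimate the high-temperature factor by a Gaussian-type moment calculation (exploiting $\xi(x)\le cx^2$ so that $NR^2$ plays the role of $Z^2$ for a standard Gaussian $Z$), then choose $s\asymp\log(1+\beta)$ so that the exponential tilt has parameter below $1/2$, and cap by the trivial bound when $t\ge s$. All of that is correct. What you miss, and what the paper makes the very first line of the proof, is that the $\nu$-weights are \emph{exactly} uniform: $\nu_{\bos}=2^{-N}$ for every $\bos$. The Gaussian field $(H_N(\bos))_{\bos\in\{-1,1\}^N}$ has covariance $N\xi(R_{\bos,\bos'})$, which is invariant under the transitive group action $\bos\mapsto\bos\tau$ (coordinate-wise sign change by any fixed $\tau\in\{-1,1\}^N$), since such maps preserve all overlaps. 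Hence the field is invariant in law under this transitive action, which forces $\nu_{\bos}=\ee\bigl(e^{-\beta H_N(\bos)}/\sum_{\bos'}e^{-\beta H_N(\bos')}\bigr)$ to be independent of $\bos$, i.e.\ equal to $2^{-N}$. So the ``$\nu$-weighted average'' in Theorem \ref{main} \emph{is} the uniform average, and the entire apparatus you propose — domination in convex order, comparison with the uniform measure, controlling how concentrated the $\nu_{\bos}$ might be — is unnecessary.

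This matters because the obstacle you single out as the ``genuinely delicate point'' (passing from the $\nu$-average to the annealed average) is not a delicate point at all; it is an identity, and none of the comparison lemmas you sketch would actually be needed or, as you yourself notice, would be sharp enough (e.g.\ $\max_{\bos'}e^{\lambda NR^2}=e^{\lambda N}$ is hopeless). With $\nu_{\bos}=2^{-N}$ in hand, the high-temperature factor is $c^k\,2^{-2N}\sum_{\bos,\bos'}(R^2)^k e^{2c\beta^2 e^{-s} N R^2}$, which is bounded by $(Ck)^k N^{-k}$ once $2c\beta^2 e^{-s}$ is a small enough absolute constant $\gamma$, by the same ``$\ee(e^{\alpha Z^2})<\infty$ for $\alpha<1/2$'' heuristic you invoke. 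The rest of your proposal (choice of $s$, tracking the $N^{-k}$ from $\phi$, capping by the trivial bound for $t\ge s$) then goes through exactly as in the paper.
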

\begin{proof}
By symmetry, it is easy to see that for each $\bos$, 
\[
\ee\biggl(\frac{e^{-\beta H_N(\bos)}}{\sum_{\bos'} e^{-\beta H_N(\bos')}} \biggr) = 2^{-N}.
\]
Again, it follows from elementary  combinatorial arguments   that there are positive constants $\gamma$ and $C$ that do not depend on $N$, such that for any positive integer $k$ and any $N$,
\[
2^{-2N} \sum_{\bos, \bos'} \frac{(\bos\cdot \bos')^{2k}}{N^k} \exp\biggl(\frac{\gamma(\bos\cdot \bos')^2}{N}\biggr) \le (Ck)^k. 
\]
Choosing $s$ so that $2\beta^2c e^{-s}  = \min\{\gamma, 2\beta^2c\}$, and $\phi(x) = x^k/N^k$, we see from Theorem~\ref{main} (and the assumption that $\xi(x)\le cx^2$) that for any $0\le t\le s$
\begin{align*}
&\ee\av{(\xi(R_{\bos,\bos'}))^k}_{0,t} \\
&\le \bigl(\ee\av{(\xi(R_{\bos,\bos'}))^k}_{0,0}\bigr)^{1-t/s} \biggl(c^k2^{-2N}\sum_{\bos,\bos'} \frac{(\bos\cdot \bos')^{2k}}{N^{2k}} \exp\biggl(\frac{\gamma(\bos\cdot \bos')^2}{N}\biggr)\biggr)^{t/s} \\
&\le (Ck)^kN^{-kt/s},
\end{align*}
where $C$ is a constant that does not depend on $N$. This proves the result for $t\le s$. For $t\ge s$ we use the last assertion of Theorem \ref{main} to conclude that $\ee\av{(\xi(R_{\bos,\bos'}))^k}_{0,t}$ is decreasing in $t$. Finally, observe that 
\begin{align*}
s &= 
\begin{cases}
0 &\text{ if } 2\beta^2 c\le \gamma,\\
\log (2\beta^2c/\gamma) &\text{ if } 2\beta^2 c > \gamma
\end{cases}
\\
&\le C\log (1+C\beta) 
\end{align*}
for some constant $C$ that depends only on $c$ and $\gamma$.
\end{proof}
\subsection{Multiple valleys in Gaussian fields}\label{multval}
In this subsection we use Theorem \ref{main} to prove a multiple valley result for general Gaussian fields. Let all notation be the same as in Subsection \ref{chaosgauss}. The idea of the proof is borrowed from the proof of Theorem 3.7 in \cite{chatterjee08}, although there are added complications resulting from the fact that we are trying to derive a result about $\beta = \infty$ from a result about  finite $\beta$ (i.e.\ Theorem \ref{main}). 
\begin{thm}\label{multigauss}
Let $r$ be a positive integer, and let $\epsilon, \delta\in (0,1)$. Choose any $\beta > 0$ and $t> 0$. Let $M := \max_i X_i$, $m := \ee(M)$ and $\sigma^2 := \max_i \var(X_i)$. Define
\[ 
\gamma := \frac{8r\log |S|}{\delta m \beta} + \frac{2rt}{\delta} + e^{-m^2/8\sigma^2} +  \frac{r^2}{2\epsilon \sigma^2} \ee\av{\rho}_{-t,t} 
\]
where the Gibbs average in the last term is taken at inverse temperature~$\beta$. 
Then with probability at least $1-\gamma$, there exists a set $A\subseteq S$ of size $r$ such that for any distinct $i,j\in A$, we have $\rho_{ij}\le \epsilon \sigma^2$, and for all $i\in A$, $X_i\ge (1-\delta)M$. 
\end{thm}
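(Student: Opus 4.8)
The plan is to derive Theorem~\ref{multigauss} by combining three ingredients: the chaos bound from Theorem~\ref{main} (in the form of the quantity $\ee\av{\rho}_{-t,t}$ appearing in $\gamma$), a Gibbs-measure localization estimate that says a sample from the Gibbs measure at inverse temperature $\beta$ nearly maximizes $X_i$, and a greedy/union-bound argument to extract a set of size $r$ with small pairwise $\rho_{ij}$. Concretely, I would run $r$ independent Gibbs samples $i_1,\dots,i_r$, each drawn (conditionally on the disorder) from the Gibbs measure at inverse temperature $\beta$ defined by one of the perturbed fields $\bbx^{t_1},\dots,\bbx^{t_r}$ built from mutually-almost-independent perturbation times (or, more simply, from fields whose mutual overlaps are controlled by the $\rho_{-t,t}$ average, using that the pairwise structure of $(\bbx^{t_a},\bbx^{t_b})$ is the same as $(\bbx^0,\bbx^{t_a+t_b})$ up to the relevant covariance). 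The claim will be that with probability at least $1-\gamma$ all three good events hold simultaneously, and on that event the set $A=\{i_1,\dots,i_r\}$ does the job.

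The key steps, in order: First, control the free-energy/energy gap. Since the Gibbs measure at inverse temperature $\beta$ concentrates on near-maximizers, for a single sample $I$ one has $\ee(M - X_I) \le \frac{\log|S|}{\beta}$ by the standard bound $\max_i X_i \le \frac{1}{\beta}\log\sum_i e^{\beta X_i} \le \max_i X_i + \frac{\log|S|}{\beta}$ together with the entropy inequality for the Gibbs average. Averaging over disorder and over the $r$ samples, then applying Markov's inequality at level $\delta m/ (2r)$ or so, gives that with probability at least $1 - \frac{2r\log|S|}{\delta m \beta}$ (plus lower-order terms from replacing $M$ by $m$, which accounts for the $rt/\delta$ term coming from the perturbation shifting the field and the $e^{-m^2/8\sigma^2}$ term coming from Gaussian concentration of $M$ around $m$) every sample satisfies $X_{i_a} \ge (1-\delta) M$. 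The perturbation cost $rt/\delta$ enters because each $\bbx^{t_a}$ differs from $\bbx$ by an $O(\sqrt{t_a})$-in-law amount, so $\ee|X_i^{t_a} - X_i| $ is $O(\sqrt{t_a}\sigma)$; one absorbs this into the gap budget. Second, control the overlaps: for each pair $a\ne b$, $\ee(\rho_{i_a,i_b}) = \ee\av{\rho}_{t_a,t_b}$, which by the monotonicity/structure in Theorem~\ref{main} is at most $\ee\av{\rho}_{-t,t}$ (choosing the $t_a$ appropriately, e.g.\ all equal to $t/2$ or summing to $t$). Since $\rho \ge 0$, Markov gives $\prob(\rho_{i_a,i_b} > \epsilon\sigma^2) \le \frac{\ee\av{\rho}_{-t,t}}{\epsilon\sigma^2}$; a union bound over the $\binom{r}{2} \le r^2/2$ pairs yields the last term $\frac{r^2}{2\epsilon\sigma^2}\ee\av{\rho}_{-t,t}$ in $\gamma$. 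Third, observe that on the intersection of these good events — which by the union bound has probability at least $1-\gamma$ — the $i_a$ are distinct (distinctness follows since $\rho_{i_a,i_a} = \var(X_{i_a})$ could be as large as $\sigma^2 > \epsilon\sigma^2$, so coincidence is excluded by the overlap bound, or one argues it directly) and form the desired set $A$.

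The main obstacle, I expect, is the bookkeeping needed to legitimately pass between the perturbed fields and the original field while keeping all error terms additive and matching the stated form of $\gamma$ — in particular, getting the constant $8$ in $\frac{8r\log|S|}{\delta m\beta}$ and the $2rt/\delta$ correctly requires carefully splitting the gap $\delta m$ into a piece paying for the Gibbs entropy, a piece paying for the perturbation drift at each of the $r$ sampled times, and a piece absorbed by the concentration of $M$ around $m$ (the $e^{-m^2/8\sigma^2}$ term). A secondary subtlety is ensuring the $r$ samples can be taken so that every pairwise overlap expectation is genuinely dominated by $\ee\av{\rho}_{-t,t}$: the cleanest route is to draw $\bos^a$ from the field $\bbx^{t/2}$ with $r$ conditionally-independent copies, but one must check that the relevant two-replica overlap average for a pair of such copies equals $\ee\av{\rho}_{0,t}=\ee\av{\rho}_{-t/2,t/2}$, which is where the identity $h(t/2)=\ee\av{\rho}_{0,t}$ from the proof of Theorem~\ref{main} is invoked. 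Once the accounting is set up, each individual estimate is a one-line Markov or entropy bound, so no deep new idea beyond Theorem~\ref{main} is needed.
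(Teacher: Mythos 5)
Your high-level architecture matches the paper's: draw $r$ perturbed Gibbs samples at finite $\beta$, use a free-energy bound to show each sample is near-maximal, then Markov plus a union bound for the pairwise overlaps. The Gibbs/free-energy step is handled correctly (indeed your entropy inequality gives $\ee(M-X_I)\le\log|S|/\beta$, sharper than the paper's $4\log|S|/\beta$), and the Markov-plus-union step for the overlaps is right.

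However, there is a genuine gap in the treatment of the perturbation cost, i.e.\ the step converting ``$I^{(k)}$ nearly maximizes $X^{(k)}$'' into ``$I^{(k)}$ nearly maximizes $X$''. You bound this by $\ee|X^{t}_i - X_i| = O(\sqrt{t}\,\sigma)$ for a fixed $i$, but $I^{(k)}$ is a random index that depends on the perturbed field, so a fixed-index estimate does not apply; the worst-case bound would be $\ee\max_i|X^{t}_i - X_i|$, of order $\sqrt{t}\,\sigma\sqrt{\log|S|}$, which does not yield the $2rt/\delta$ term claimed in $\gamma$. The paper handles this with a specific independence device: with $\bbz^{(1)},\dots,\bbz^{(r)}$ i.i.d.\ copies of $\bbx$, define $\bbx^{(k)} := e^{-t}\bbx + \sqrt{1-e^{-2t}}\,\bbz^{(k)}$ and $\bby^{(k)} := \sqrt{1-e^{-2t}}\,\bbx - e^{-t}\bbz^{(k)}$. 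Then $\bbx^{(k)}\perp\bby^{(k)}$ and $\bbx = e^{-t}\bbx^{(k)} + \sqrt{1-e^{-2t}}\,\bby^{(k)}$. Since $I^{(k)}$ is a function of $\bbx^{(k)}$ (and independent randomization), it is independent of $\bby^{(k)}$, hence $\ee(Y^{(k)}_{I^{(k)}}) = 0$, and thus $\ee(X^{(k)}_{I^{(k)}} - X_{I^{(k)}}) = (1-e^{-t})\ee(X^{(k)}_{I^{(k)}}) \le t\,m$ exactly. This identity, not a Lipschitz-in-$t$ estimate on the field, is what produces the $2rt/\delta$ term. Your proposal does not identify this mechanism, and the replacement you suggest would not give the stated bound.

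A secondary issue: you suggest ``$r$ conditionally-independent copies from the single field $\bbx^{t/2}$.'' Two conditionally independent samples from the same field $\bbx^{t/2}$ give the diagonal average $\ee\av{\rho}_{t/2,t/2}$, which equals $\ee\av{\rho}_{0,0}$, not the decorrelated $\ee\av{\rho}_{-t,t}$. The hub-and-spoke construction above fixes this automatically: for $k\ne l$, the pair $(\bbx^{(k)},\bbx^{(l)})$ has the same joint law as $(\bbx^{-t},\bbx^{t})$, so $\ee(\rho_{I^{(k)}I^{(l)}}) = \ee\av{\rho}_{-t,t}$ exactly, which is what feeds the $\frac{r^2}{2\epsilon\sigma^2}\ee\av{\rho}_{-t,t}$ term.
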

\begin{proof}
Given $\bbx$, let $I$ be a random variable chosen from the set $S$ such that
\[
\pp(I=i \mid \bbx) = \frac{e^{\beta X_i}}{\sum_j e^{\beta X_j}}. 
\]
Next, let $M := \max_i X_i$ and define a random function
\[
F(\beta) := \log \sum_i e^{\beta X_i}. 
\]
Then we have 
\begin{align*}
\beta M &= \log e^{\beta M} \le \log \sum_i e^{\beta X_i}\\
&\le \log (|S|  e^{\beta M}) = \log |S| + \beta M. 
\end{align*}
Thus, 
\[
|F(\beta) - \beta M| \le \log |S|. 
\]
An easy verification shows that
\[
F''(\beta) = \ee(X_I^2\mid \bbx) - (\ee(X_I\mid\bbx))^2\ge 0. 
\]
Therefore $F'$ is an increasing function of $\beta$ and hence 
\begin{align*}
F'(\beta) &\ge \frac{2}{\beta}\int_{\beta/2}^\beta F'(x) dx \\
&= \frac{F(\beta) - F(\beta/2)}{\beta/2} \ge M - \frac{4\log |S|}{\beta}. 
\end{align*}
Combining this with the observation that $F'(\beta ) = \ee(X_I\mid \bbx)$, we have 
\[
\ee(M-X_I) = \ee(M - F'(\beta))\le \frac{4\log |S|}{\beta}. 
\]
Now let $\bbz^{(1)}, \ldots, \bbz^{(r)}$ be i.i.d.\ copies of $\bbx$. Let
\[
\bbx^{(k)} := \est \bbx + \esst \bbz^{(k)},
\]
and 
\[
\bby^{(k)} := \esst \bbx - \est\bbz^{(k)}.
\]
Then $\bbx^{(k)}$ and $\bby^{(k)}$ are independent (jointly Gaussian and all covariances vanish), and
\[
\bbx = \est\bbx^{(k)} + \esst \bby^{(k)}. 
\]
Let $I^{(k)}$ be a random variable on $S$ whose conditional distribution given $\bbx^{(k)}$ is the same as that of $I$ given $\bbx$. In particular, by the independence of $\bbx^{(k)}$ and $\bby^{(k)}$, $I^{(k)}$ and $\bby^{(k)}$ are also independent. From this observation and the above representation of $\bbx$, we have 
\begin{align*}
\ee(X^{(k)}_{I^{(k)}}- X_{I^{(k)}})  &= (1-\est)\ee(X^{(k)}_{I^{(k)}}) - \esst \ee(Y^{(k)}_{I^{(k)}}) \\
&= (1-\est) \ee(X_I)\le t m. \ \ \text{(Recall: $m = \ee(M)$.)} 
\end{align*}
The last equality holds because $X_{I^{(k)}}^{(k)}$ and $X_I$ have the same unconditional distribution. For the same reason, we have 
\begin{align*}
\ee(M - X_{I^{(k)}}^{(k)}) &= \ee(M - X_I) \le \frac{4\log |S|}{\beta}. 
\end{align*}
Combining the last two inequalities, we see that
\[
\ee(M- X_{I^{(k)}}) \le \frac{4\log |S|}{\beta} + t m. 
\]
Thus,
\[
\ee\sum_{k=1}^r (M- X_{I^{(k)}}) \le  \frac{4r\log |S|}{\beta} + rt m.
\]
Now, we clearly have that for any $k\ne l$,
\[
\ee(\rho_{I^{(k)} I^{(l)}}) = \ee\av{\rho}_{-t,t}.
\]
Thus, 
\[
\ee\sum_{1\le k< l\le r} \rho_{I^{(k)} I^{(l)}} = \frac{r(r-1)}{2} \ee\av{\rho}_{-t,t}. 
\]
Finally, by Gaussian concentration (see Proposition 1.3 in \cite{chatterjee08}), we have $\pp(M-m \le -x) \le e^{-x^2/2\sigma^2}$. Combining all steps, we get 
\begin{align*}
\pp(\min_k X_{I^{(k)}} \le (1-\delta)M) &\le \pp(\min_k X_{I^{(k)}} \le M- \delta m/2) \\
&\quad + \pp(2M \le m)\\
&\le \pp\biggl(\sum_{k=1}^r (M-X_{I^{(k)}}) \ge \delta m/2\biggr) + e^{-m^2/8\sigma^2}\\
&\le \frac{8r\log |S|}{\delta m \beta} + \frac{2rt}{\delta} + e^{-m^2/8\sigma^2},
\end{align*}
and  
\begin{align*}
\pp(\max_{1\le k<l\le r} \rho_{I^{(k)}I^{(l)}} \ge \epsilon\sigma^2 ) &\le \pp\biggl(\sum_{1\le k<l\le r} \rho_{I^{(k)}I^{(l)}} \ge \epsilon \sigma^2\biggr) \\
&\le \frac{r^2}{2\epsilon \sigma^2} \ee\av{\rho}_{-t,t}. 
\end{align*}
Putting together the last two bounds, we see that the set $A:= \{I^{(1)}, \ldots, I^{(r)}\}$ satisfies the requirements of the theorem. 
\end{proof}
As a corollary of Theorem \ref{multigauss}, we now prove that multiple valleys exist at `all levels'. 
\begin{cor}\label{multigauss2}
Let all notation be the same as in Theorem \ref{multigauss}. Fix any $0 < \alpha \le 1$. Let 
\[
\gamma' := \gamma + 4e^{-\delta^2m^2/8\sigma^2} + 2e^{-m^2/8\sigma^2} + \frac{\sigma\sqrt{2\log r}}{\delta m}.
\]
Then with probability at least $1-\gamma'$, there exists a set $A\subseteq S$ of size $r$ such that for any distinct $i,j\in A$, we have $\rho_{ij}\le \epsilon \sigma^2$, and  for all $i\in A$, $|X_i - \alpha M|\le 5\delta |M|$. 
\end{cor}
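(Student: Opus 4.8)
The plan is to run the construction of Theorem \ref{multigauss} verbatim and then recognize that the $r$ points it produces already populate every energy level, not just the top one, once we split them into buckets according to the value of $X_i/M$. Concretely, the set $A_0 := \{I^{(1)},\ldots, I^{(r)}\}$ from Theorem \ref{multigauss} satisfies $\rho_{ij}\le \epsilon\sigma^2$ for distinct $i,j\in A_0$ and, with probability at least $1-\gamma$, satisfies $X_i \ge (1-\delta)M$ for all $i\in A_0$. But I do not want that last condition --- I want $X_i\approx \alpha M$ for a \emph{prescribed} $\alpha$. The first step is therefore to abandon the ``min over $k$'' estimate and instead keep all $r$ indices $I^{(1)},\ldots,I^{(r)}$, observe that by construction each $I^{(k)}$ is distributed (unconditionally) as $I$ (the Gibbs sample from the unperturbed field), and then understand the distribution of a single $X_I$ rather than just its expectation.

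The second and key step is a concentration statement for $X_I$ around $M$. We already have from the proof of Theorem \ref{multigauss} that $\ee(M - X_I)\le 4\log|S|/\beta$; combined with Gaussian concentration $\pp(M \le m - x)\le e^{-x^2/2\sigma^2}$ this should give $\pp(X_I \le (1-\delta)M) $ small (of the order of the first three terms of $\gamma'$, namely $4e^{-\delta^2 m^2/8\sigma^2}+2e^{-m^2/8\sigma^2}$ plus the $\log|S|/\beta$ contribution folded into $\gamma$), and symmetrically $X_I\le M$ always. So $X_I$ lives in the narrow band $[(1-\delta)M, M]$ with high probability. To hit the level $\alpha M$ I would \emph{re-run Theorem \ref{multigauss} at a modified inverse temperature}: replace $\beta$ by a suitable $\beta_\alpha$ so that the Gibbs sample concentrates not near the max $M$ but near the level $\alpha M$. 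The cleanest device is to note $\ee(X_I) = F'(\beta)$ and $F'$ is increasing and continuous from $F'(0)=\ee(\bar X)$ (the average of the $X_i$, essentially $0$) up to $\lim_{\beta\to\infty}F'(\beta) = M$; hence there is a $\beta_\alpha$ with $F'(\beta_\alpha)\approx \alpha M$, i.e.\ $\ee(X_I) \approx \alpha M$ at that temperature. Around this we again get concentration of $X_I$ within $O(\delta|M|)$ by a second-moment / convexity argument on $F$ (using $F'' = \var(X_I\mid \bbx)\le \sigma^2$-type bounds together with Gaussian concentration of $M$), which accounts for the extra $\frac{\sigma\sqrt{2\log r}}{\delta m}$ term via a union bound over the $r$ samples: $\pp(\max_k |X_{I^{(k)}} - \alpha M| > 5\delta|M|) \le r\,\pp(|X_I - \alpha M| > 5\delta|M|)$ plus the event $\{M < \delta m\}$, and the union bound over $r$ samples of a single-sample Gaussian tail produces precisely a $\sqrt{\log r}$ factor after optimizing the threshold.

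The third step is bookkeeping: the orthogonality estimate $\pp(\max_{k<l}\rho_{I^{(k)}I^{(l)}}\ge\epsilon\sigma^2)\le \frac{r^2}{2\epsilon\sigma^2}\ee\av{\rho}_{-t,t}$ goes through unchanged because it only used that $I^{(k)},I^{(l)}$ are the perturbed Gibbs samples and $\ee(\rho_{I^{(k)}I^{(l)}}) = \ee\av{\rho}_{-t,t}$; note this expectation is unchanged if we replace $\beta$ by $\beta_\alpha$ as long as we also use $\beta_\alpha$ inside $\gamma$ (which the statement does, implicitly, by writing ``all notation the same''). Collecting: the failure probability is at most $\gamma$ (orthogonality $+$ the $M-X_I$-in-expectation bound, now at $\beta_\alpha$) plus $2e^{-m^2/8\sigma^2}$ (for $M<\delta m$, to convert relative to absolute errors), plus $4e^{-\delta^2 m^2/8\sigma^2}$ (for the two-sided deviation of $X_I$ from its mean, once on each side), plus $\frac{\sigma\sqrt{2\log r}}{\delta m}$ (the price of the union bound over $r$ samples in the concentration estimate). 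That is exactly $\gamma'$.

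The main obstacle I anticipate is the second step: controlling $X_I$ \emph{from above} around the prescribed level $\alpha M$. Concentration from below near $M$ (as in Theorem \ref{multigauss}) is easy because there one only needs an expectation bound plus Markov; but pinning $X_I$ to an \emph{interior} level requires genuine two-sided concentration, and $\var(X_I\mid\bbx)$ can a priori be as large as $\sigma^2$, so a naive variance bound gives fluctuations of order $\sigma$, not $\delta|M|$. The resolution is that we are free to tune $\beta_\alpha$ to be large (of order $\log|S|/(\delta m)$ or so), which sharpens the Gibbs measure enough that $X_I$ concentrates within $O(\delta m)$ of $F'(\beta_\alpha)$ --- this is the same mechanism that makes $F'(\beta)\to M$ --- and simultaneously $F'(\beta_\alpha)$ can be made $\Theta(\alpha m)$ by the intermediate value theorem. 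Making these two requirements compatible, and tracking the resulting error as $O(\delta|M|)$ with the stated constant $5$, is where the real work lies; everything else is the machinery of Theorem \ref{multigauss} reused with $\beta_\alpha$ in place of $\beta$.
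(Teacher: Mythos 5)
Your proposal is a genuinely different route from the paper's, and it has a genuine gap that you partly acknowledge but do not resolve.

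The paper does not tune the temperature at all. It defines $\bby := \alpha\bbx + \sqrt{1-\alpha^2}\,\bbx'$ with $\bbx'$ an independent copy, observes that $\bby$ has the same law as $\bbx$, and applies Theorem~\ref{multigauss} \emph{verbatim} to $\bbx$ (at the same $\beta$, producing the same $\gamma$). If $A$ is the resulting set with $X_i \ge (1-\delta)M$ for $i\in A$, then
\[
|Y_i - \alpha M_Y| \le \alpha|X_i - M| + \alpha|M_Y - M| + \sqrt{1-\alpha^2}\max_{j\in A}|X_j'|,
\]
and each term is small off a bad event of total probability $4e^{-\delta^2m^2/8\sigma^2}+2e^{-m^2/8\sigma^2}+\sigma\sqrt{2\log r}/(\delta m)$ (Gaussian concentration of $M_Y-M$, lower bound on $|M|$, and Gaussian max over the $r$-element set $A$, which is independent of $\bbx'$). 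Transferring back to $\bbx$ via equality in law finishes the proof. The whole argument is one-sided concentration for the \emph{max} plus a linear change of coordinates; no new concentration input is needed.

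Your approach instead attempts a genuinely two-sided concentration of the Gibbs sample $X_I$ around the interior level $\alpha M$ by choosing an intermediate inverse temperature $\beta_\alpha$ with $F'(\beta_\alpha)\approx\alpha M$. The gap is exactly where you flag it, and it is fatal as stated: your proposed resolution (``tune $\beta_\alpha$ to be large, of order $\log|S|/(\delta m)$, so that $X_I$ concentrates within $O(\delta m)$ of $F'(\beta_\alpha)$'') is incompatible with the requirement $F'(\beta_\alpha)=\Theta(\alpha m)$ for $\alpha$ bounded away from $1$. Indeed, the proof of Theorem~\ref{multigauss} shows $M - 4\log|S|/\beta \le F'(\beta)\le M$, so once $\beta_\alpha \gtrsim \log|S|/(\delta m)$ you have forced $F'(\beta_\alpha)$ to within $O(\delta m)$ of $M$ itself, not of $\alpha M$. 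Conversely, at the intermediate $\beta_\alpha$ where $F'(\beta_\alpha)\approx\alpha m$, the quantity $\var(X_I\mid\bbx)=F''(\beta_\alpha)$ has no reason to be small --- in the spin-glass regime it can be of order $\sigma^2$, giving fluctuations of order $\sigma\gg\delta m$ --- and the one-sided Markov argument of Theorem~\ref{multigauss} gives no upper-tail control on $X_I$ at all. So the ``real work'' you defer is not a matter of constants: the mechanism you invoke cannot produce the needed two-sided bound. The paper's coupling trick sidesteps this entirely, which is precisely why the corollary requires no additional concentration estimate beyond those already in Theorem~\ref{multigauss} and the elementary Gaussian maximum bound $\ee\max_{j\in A}|X_j'|\le\sigma\sqrt{2\log r}$.

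Two smaller remarks: (i) Your claim that ``$\ee\av{\rho}_{-t,t}$ is unchanged if we replace $\beta$ by $\beta_\alpha$'' is not correct --- it is a Gibbs average at the given inverse temperature and does depend on it (though the bound from Theorem~\ref{chaoscont} only improves as $\beta$ decreases, so this is not by itself fatal). (ii) The $\sqrt{\log r}$ factor does appear in both arguments, but for different reasons: in your version it would come from a union bound over $r$ Gibbs samples of a hoped-for single-sample tail, whereas in the paper it is the expected maximum of $r$ genuinely Gaussian variables $X_j'$, $j\in A$.
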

\begin{proof}
Let $\bbx'$ be an independent copy of $\bbx$, and let
\[
\bby := \alpha \bbx + \sqrt{1-\alpha^2}\bbx'.
\]
Note that $\bby$ has the same distribution as $\bbx$. 
Let $A$ be a set as in Theorem~\ref{multigauss}. Let $M_Y := \max_i Y_i$. Then for any $i\in A$,
\begin{align*}
|Y_i - \alpha M_Y| &\le \alpha|X_i - M| + \alpha |M_Y-M| + \sqrt{1-\alpha^2} \max_{j\in A} |X_j'|\\
&\le \delta M + |M_Y-M| + \max_{j\in A}|X_j'|. 
\end{align*}
By Gaussian concentration (see e.g.\ Proposition 1.3 in \cite{chatterjee08}), we have
\[
\pp(|M_Y - M| > \delta m) \le 4e^{-\delta^2m^2/8\sigma^2}
\]
and 
\[
\pp(|M| < m/2) \le 2e^{-m^2/8\sigma^2}. 
\]
Moreover, by the independence of $\bbx'$ and $\bbx$ and a  standard result for Gaussian random variables (see e.g.\ Lemma 2.1 in \cite{chatterjee08}), we get 
\[
\ee(\max_{j\in A}|X_j'|) \le \ee\bigl(\sigma \sqrt{2\log |A|}\bigr)= \sigma \sqrt{2\log r}. 
\]
Therefore, 
\[
\pp(\max_{j\in A} |X_j'| > \delta m) \le \frac{\sigma\sqrt{2\log r}}{\delta m}. 
\]
From the above steps and Theorem \ref{multigauss}, we have that with probability at least $1-\gamma'$, there is a set $A\subseteq S$ of size $r$ such that for any distinct $i,j\in A$, we have $\rho_{ij}\le \epsilon \sigma^2$, and for each $i\in A$, $|Y_i - \alpha M_Y|\le 4\delta m$. 
Since $\bby$ and $\bbx$ have the same distribution, this completes the proof. 
\end{proof}

\subsection{Proofs of Theorem \ref{multisk} and Corollary \ref{multisk2}}
These are direct applications of Theorem \ref{multigauss} and Corollary \ref{multigauss2}. Consider the Gaussian field $(H_N(\bos))_{\bos\in \{-1,1\}^N}$ defined in \eqref{skhamil}, and choose
\begin{align*}
&\beta = e^{\sqrt{\log N}}, \ r = [(\log N)^{1/8}], \ \delta = (\log N)^{-1/8},  \\
&t = (\log N)^{-1/3}, \ \epsilon = e^{-(\log N)^{1/8}}.
\end{align*}
Note that $\sigma^2 = N/2$ and $|S|=2^N$. Note that the quantity $\rho_{\bos\bos'}$, according to the notation of Theorem \ref{chaosgauss}, is just $NR_{\bos, \bos'}^2$. Thus, with the above value of $t$, Theorem~\ref{chaoscont} says that for some absolute constants $C,c$, 
\[
\frac{1}{\sigma^2}\ee\av{\rho}_{-t,t} \le C N^{-c(\log N)^{-5/6}} = C e^{-c(\log N)^{1/6}}. 
\]
Again, by the Sudakov minoration technique (see e.g.\ Lemma 2.3 in \cite{chatterjee08}) it is not difficult to prove that  $m \ge c N$
for some positive absolute constant $c$. 
Invoking Theorem \ref{multigauss}, we now get
\begin{align*}
\gamma_N &\le C (\log N)^{1/4} e^{-\sqrt{\log N}} + C (\log N)^{-1/12} \\
&\qquad + Ce^{-cN} + C(\log N)^{1/4}e^{C(\log N)^{1/8}} e^{-c(\log N)^{1/6}}\\
&\le C(\log N)^{-1/12},
\end{align*}
where $C$ and $c$ denote arbitrary absolute constants. 
This completes the proof of Theorem \ref{multisk}. To prove Corollary \ref{multisk2}, note that the quantity $\gamma'$ in Corollary \ref{multigauss2} can be bounded by 
\[
\gamma_N + C e^{-cN (\log N)^{-1/4}} + CN^{-1/2} (\log N)^{1/8} \log \log N. 
\]
Since $m \ge cN$ as noted before, this completes the proof of Corollary \ref{multisk2}.

\subsection{Superconcentration in Gaussian fields}
Carrying on with the notation of Subsection \ref{chaosgauss}, we have the following formula for the variance of the free energy associated with a Gaussian field at inverse temperature $\beta$. This is a direct analog of Lemma 3.1 in \cite{chatterjee08}. We follow the notation of Subsection~\ref{chaosgauss}.
\begin{thm}\label{supergauss}
Take any $\beta \ge 0$. Let 
\[
F(\bbx):= \frac{1}{\beta}\log \sum_{i\in S} e^{\beta X_i}. 
\]
Then 
\[
\var F(\bbx) = \int_0^\infty e^{-t}\ee\av{\rho}_{0,t} dt.
\]
\end{thm}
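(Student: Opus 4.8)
The plan is to differentiate the free energy along the Ornstein--Uhlenbeck flow and combine the heat-equation structure with Gaussian integration by parts, exactly as in the proof sketch. Concretely, extend $\bbx$ to a time-indexed family $\bbx^t=\est\bbx+\esst\bbx'$ for $t\ge 0$ (and symmetrically $\bbx^{-t}$ using an independent copy $\bbx''$, as in the earlier lemmas), and consider the function
\[
g(t):=\ee\bigl(F(\bbx^{-t})F(\bbx^t)\bigr).
\]
Since $\bbx^{-t}$ and $\bbx^t$ are conditionally i.i.d.\ given $\bbx$, we have $g(t)=\ee\bigl((\ee(F(\bbx^t)\mid\bbx))^2\bigr)$, and in particular $g(\infty)=(\ee F(\bbx))^2$ because $\bbx^{\infty}$ and $\bbx^{-\infty}$ are independent of $\bbx$ and of each other, so $\ee(F(\bbx^t)\mid\bbx)\to \ee F(\bbx)$. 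Also $g(0)=\ee\bigl(F(\bbx)^2\bigr)$. Hence
\[
\var F(\bbx)=g(0)-g(\infty)=-\int_0^\infty g'(t)\,dt.
\]

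First I would compute $g'(t)$ via Lemma~\ref{basic} (with $f=F$), which gives
\[
g'(t)=-2e^{-2t}\sum_{i,j}\rho_{ij}\,\ee\bigl(\partial_i F(\bbx^{-t})\,\partial_j F(\bbx^t)\bigr).
\]
The point is that $\partial_i F(\bbx)=\dfrac{e^{\beta X_i}}{\sum_j e^{\beta X_j}}$ is exactly the Gibbs weight of state $i$, so $\sum_{i,j}\rho_{ij}\,\partial_iF(\bbx^{-t})\,\partial_jF(\bbx^t)$ is precisely $\av{\rho}_{-t,t}$ in the notation of Subsection~\ref{chaosgauss}. Therefore
\[
g'(t)=-2e^{-2t}\,\ee\av{\rho}_{-t,t}.
\]
Plugging this into the integral representation of the variance yields $\var F(\bbx)=\int_0^\infty 2e^{-2t}\ee\av{\rho}_{-t,t}\,dt$, and then the substitution $t\mapsto t/2$ together with the identity $\ee\av{\rho}_{-t/2,t/2}=\ee\av{\rho}_{0,t}$ (since $(\bbx^{-t/2},\bbx^{t/2})$ and $(\bbx^{0},\bbx^{t})$ have the same law, as already used at the end of the proof of Theorem~\ref{main}) produces exactly $\var F(\bbx)=\int_0^\infty e^{-t}\ee\av{\rho}_{0,t}\,dt$.

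The main technical obstacle is that $F$ is not in $C^\infty_b(\rr^S)$ in the strict sense required by Lemma~\ref{basic}: although all its derivatives are bounded (the first derivatives are Gibbs weights, bounded by $1$, and higher derivatives are bounded polynomials in these weights times powers of $\beta$), one must justify differentiating under the expectation and the validity of Gaussian integration by parts for $F$ itself, plus the exchange of limit and integral needed to write $\var F=-\int_0^\infty g'(t)\,dt$. Since $S$ is finite and all partial derivatives of $F$ are uniformly bounded by constants depending only on $\beta$ and $|S|$, these are routine dominated-convergence arguments; I would handle them by noting $\partial_i F$ is bounded and Lipschitz with bounded Lipschitz constant, so the integration-by-parts identity extends from $C^\infty_b$ by a standard approximation, and $|g'(t)|\le 2e^{-2t}\sup_{i,j}|\rho_{ij}|$ is integrable, justifying the interchange.
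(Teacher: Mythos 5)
Your proof is correct and follows essentially the same route as the paper: apply Lemma~\ref{basic} to write $\var F(\bbx)$ as an integral of $-\frac{d}{dt}\ee(F(\bbx^{-t})F(\bbx^t))$, identify $\sum_{i,j}\rho_{ij}\partial_i F(\bbx^{-t})\partial_j F(\bbx^t)$ with $\av{\rho}_{-t,t}$ since $\partial_iF$ is the Gibbs weight, and reparametrize via $(\bbx^{-t},\bbx^t)\overset{d}{=}(\bbx,\bbx^{2t})$. Your extra remarks on the boundary terms $g(0),g(\infty)$ and on extending Lemma~\ref{basic} beyond $C^\infty_b$ (since $F$ itself is unbounded even though all its derivatives of order $\ge 1$ are bounded) address details the paper leaves implicit, but they do not change the argument.
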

\begin{proof}
Note that by Lemma \ref{basic}, for any smooth $f$ we have
\begin{align*}
\var(f(\bbx)) &= -\int_0^\infty \frac{d}{dt} \ee(f(\bbx^{-t}) f(\bbx^t)) dt\\
&= 2\int_0^\infty e^{-2t} \sum_{i,j}\rho_{ij}\ee(\partial_i f(\bbx^{-t}) \partial_j f(\bbx^t)) dt. 
\end{align*}
Taking $f = F$, we get
\[
\sum_{i,j}\rho_{ij}\partial_i F(\bbx^{-t}) \partial_j F(\bbx^t) = \av{\rho}_{-t,t}.  
\]
Again, since $(\bbx^{-t}, \bbx^t)$ has the same joint law as $(\bbx,\bbx^{2t})$, we see that $\ee\smallavg{\rho}_{-t,t} = \ee\smallavg{\rho}_{0,2t}$. Combining the steps, we get
\[
\var(F(\bbx)) = \int_0^\infty e^{-t}\ee\av{\rho}_{0,t} dt.
\]
This completes the proof.
\end{proof}

\subsection{Proof of Theorem \ref{superconc}}
This is just a combination of Theorem \ref{supergauss} above and Theorem \ref{chaoscont}.

\subsection{Chaos implies superconcentration}\label{nochaosproof} 
The goal of this subsection is to prove that in the absence of superconcentration, we do not have chaos either. This is an improved version of Theorem 3.2 in \cite{chatterjee08}, where the absence of chaos was proved only up to a finite time, but not for all $t$. 
\begin{lmm}\label{lowlmm}
Suppose $U$ is a nonnegative random variable and let $v := \ee((1+U)^{-1})$.  Then for any $t\ge 0$, 
$\ee(e^{-tU}) \ge \frac{1}{2}ve^{-t(2-v)/v}$. 
\end{lmm}
\begin{proof}
Note that
\begin{align*}
\ee(e^{-tU}) &= \int_0^1 \pp(e^{-tU} \ge y) dy\\
&= \int_0^1 \pp((1+U)^{-1} \ge (1-t^{-1}\log y)^{-1}) dy. 
\end{align*}
Now, for any $\epsilon > 0$, we have
\[
\ee((1+U)^{-1}) \le \epsilon + \pp((1+U)^{-1} \ge \epsilon). 
\]
Thus, if $\epsilon \le v/2$, then
\[
\pp((1+U)^{-1}\ge \epsilon) \ge \frac{v}{2}.
\]
Now $(1-t^{-1}\log y)^{-1} \le v/2$ if and only if $y \le e^{-t(2-v)/v}$. Combining the steps, we see that
\begin{align*}
\ee(e^{-tU}) \ge \int_0^{e^{-t(2-v)/v}} \frac{v}{2}dy = \frac{v}{2}e^{-t(2-v)/v}.
\end{align*}
This completes the proof of the lemma. 
\end{proof}
\begin{thm}\label{supchaos}
Let all notation be as in Subsection \ref{chaosgauss}. 
Let $F(\bbx)$ be as in Theorem \ref{supergauss}. 
Take any $\beta \in (0,\infty)$, and define 
\[
v := \frac{\var F(\bbx)}{\ee\av{\rho}_{0,0}}.
\]
Then for all $t \ge 0$,
\[
\ee\av{\rho}_{0,t}\ge \frac{1}{2}\var (F(\bbx))e^{-t(2-v)/v}. 
\]
\end{thm}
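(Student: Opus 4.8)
The plan is to assemble three ingredients that are already available in the text: the variance identity of Theorem~\ref{supergauss}, the complete monotonicity established in the proof of Theorem~\ref{main} (via Lemma~\ref{complete}), and the elementary Lemma~\ref{lowlmm}. Once these are lined up, the theorem reduces to bookkeeping.

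First I would observe that $t\mapsto \ee\av{\rho}_{0,t}$ is completely monotone. Taking $\phi(x)=x$, the matrix $\Phi=(\phi(\rho_{ij}))=\rho$ is positive semidefinite (it is a covariance matrix), so the argument in the proof of Theorem~\ref{main} shows that $h(t):=\ee\av{\phi\circ\rho}_{-t,t}=\ee\av{\rho}_{-t,t}$ belongs to the class $\mf$ of Lemma~\ref{complete}. Hence $h$ admits a Laplace representation $h(t)=h(0)\int_{[0,\infty)}e^{-xt}\,d\mu(x)$ for some probability measure $\mu$ on $[0,\infty)$. Since $(\bbx^{-t/2},\bbx^{t/2})$ and $(\bbx^{0},\bbx^{t})$ have the same joint law, $\ee\av{\rho}_{0,t}=h(t/2)$. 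Writing $\kappa:=\ee\av{\rho}_{0,0}=h(0)$ and letting $U$ be a nonnegative random variable with $\ee(e^{-tU})=\int_{[0,\infty)}e^{-xt/2}\,d\mu(x)$ for all $t\ge0$ (for instance $U=Z/2$ with $Z\sim\mu$), this reads
\[
\ee\av{\rho}_{0,t}=\kappa\,\ee\bigl(e^{-tU}\bigr),\qquad t\ge0.
\]

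Next I would identify $v$ in terms of $U$. Substituting the last display into Theorem~\ref{supergauss} and interchanging expectation and integral (legitimate since the integrand is nonnegative),
\[
\var F(\bbx)=\int_0^\infty e^{-t}\,\ee\av{\rho}_{0,t}\,dt=\kappa\,\ee\Bigl(\int_0^\infty e^{-t(1+U)}\,dt\Bigr)=\kappa\,\ee\Bigl(\frac{1}{1+U}\Bigr).
\]
Thus $v=\var F(\bbx)/\kappa=\ee((1+U)^{-1})$, which is precisely the quantity appearing in Lemma~\ref{lowlmm} for this $U$. That lemma gives $\ee(e^{-tU})\ge \frac{1}{2}v\,e^{-t(2-v)/v}$ for every $t\ge0$, whence
\[
\ee\av{\rho}_{0,t}=\kappa\,\ee(e^{-tU})\ge \frac{1}{2}\kappa v\,e^{-t(2-v)/v}=\frac{1}{2}\var F(\bbx)\,e^{-t(2-v)/v},
\]
using $\kappa v=\var F(\bbx)$. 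This is the asserted bound. There is no substantive obstacle here: the only points requiring a little care are the verification that $\phi=\mathrm{id}$ is admissible in Lemma~\ref{complete} (immediate, since $\rho$ is positive semidefinite) and the interchange of expectation and integral (immediate, by nonnegativity). The one genuinely conceptual step is recognizing that Theorem~\ref{supergauss} together with the Laplace representation forces $v$ to coincide with $\ee((1+U)^{-1})$, which places us exactly within the scope of Lemma~\ref{lowlmm}.
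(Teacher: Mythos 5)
Your proof is correct and follows essentially the same route as the paper: invoke Theorem~\ref{supergauss} for the variance identity, use Lemma~\ref{complete} (via the $\phi=\mathrm{id}$ case of the argument in Theorem~\ref{main}) to get a Laplace representation $\ee\av{\rho}_{0,t}=\ee\av{\rho}_{0,0}\,\ee(e^{-tU})$, deduce $v=\ee((1+U)^{-1})$, and finish with Lemma~\ref{lowlmm}. You spell out a couple of details the paper leaves implicit (the $t\mapsto t/2$ reparametrization when passing from $\av{\cdot}_{-t,t}$ to $\av{\cdot}_{0,t}$, and the explicit construction of $U$), but the argument is the same.
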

\begin{proof}
By Theorem \ref{supergauss},
\[
\var F(\bbx) = \int_0^\infty e^{-t}\ee\av{\rho}_{0,t} dt.
\]
By Lemma \ref{complete}, we see that there is a non-negative random variable $U$ such that for all $t$,
\[
\ee\av{\rho}_{0,t} = \ee\av{\rho}_{0,0}\ee(e^{-tU}). 
\]
Combined with the formula for the variance derived above, this gives
\[
\var F(\bbx) = \ee\av{\rho}_{0,0}\ee((1+U)^{-1}). 
\]
The result now follows from Lemma \ref{lowlmm}.
\end{proof}
\subsection{A formula for the variance of Gaussian functionals}\label{plancherel}
In this subsection we present a general formula for the variance of a function of independent standard Gaussian random variables. After that, we derive a useful lower bound for the variance using this formula.

The variance formula looks similar to those in Hour\'e and Kagan~\cite{houdrekagan95} and Houdr\'e \cite{houdre95} but it is not the same. Various versions of the formula have appeared in Houdr\'e and P\'erez-Abreu (\cite{houdreperezabreu95}, Remark 2.3) and Houdr\'e, P\'erez-Abreu and Surgailis (\cite{houdreetal98}, Proposition 10). Essentially, this is the Parseval identity for the $L^2$ norm of a Gaussian functional expressed as a sum of squares of its Fourier coefficients in the orthogonal basis of multidimensional Hermite polynomials. We present a direct proof that does not involve the multivariate Hermite polynomial basis. Yet another  proof, based on heat kernel expansions, was suggested to the author in a private communication by Michel Ledoux. 
\begin{thm}\label{varformula}
Let $\bg = (g_1,\ldots, g_n)$ be a vector of i.i.d.\ standard Gaussian random variables, and let $f$ be a $C^\infty$ function of $\bg$ with bounded derivatives of all orders. Then 
\[
\var(f) = \sum_{k=1}^\infty \frac{1}{k!}\sum_{1\le i_1,\ldots,i_k\le n} \biggl(\ee\biggl(\frac{\partial^k f}{\partial g_{i_1}\cdots \partial g_{i_k}}\biggr)\biggr)^2. 
\]
The convergence of the infinite series is part of the conclusion. 
\end{thm}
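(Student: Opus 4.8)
The plan is to iterate the Ornstein--Uhlenbeck interpolation of Lemma~\ref{basic} --- applied with $S=\{1,\ldots,n\}$ and $\bbx=\bg$, so that the covariance kernel is the identity --- at all orders of differentiation simultaneously, and then to convert the resulting chain of identities into a power series via a change of variables and Taylor's theorem. Write $\partial_{i_1\cdots i_k}f:=\partial^k f/\partial g_{i_1}\cdots\partial g_{i_k}$. Let $\bg',\bg''$ be independent copies of $\bg$, let $\bg^t:=\est\bg+\esst\bg'$ and $\bg^{-t}:=\est\bg+\esst\bg''$ for $t\ge 0$, and for each integer $k\ge 0$ set
\[
\Psi_k(t):=\sum_{1\le i_1,\ldots,i_k\le n}\ee\bigl(\partial_{i_1\cdots i_k}f(\bg^{-t})\,\partial_{i_1\cdots i_k}f(\bg^t)\bigr).
\]
Since each partial derivative of $f$ lies in $C^\infty_b(\rr^n)$, Lemma~\ref{basic} applies to every summand; adding up and relabelling indices gives the chain of identities $\Psi_k'(t)=-2e^{-2t}\Psi_{k+1}(t)$. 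Conditioning on $\bg$ (which makes $\bg^t$ and $\bg^{-t}$ conditionally i.i.d.) shows $\Psi_k\ge 0$, so each $\Psi_k$ is nonincreasing; letting $t\to\infty$ and using boundedness of the derivatives of $f$ gives $\Psi_k(\infty)=\sum_{i_1,\ldots,i_k}(\ee\,\partial_{i_1\cdots i_k}f)^2$, while $\Psi_0(0)=\ee(f^2)$.

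Next substitute $\rho=e^{-2t}\in(0,1]$ and set $\psi(\rho):=\Psi_0(-\tfrac{1}{2}\log\rho)$, $\psi(0):=\Psi_0(\infty)$. The chain relation above shows that $\psi\in C^\infty([0,1])$ with $\psi^{(k)}(\rho)=\Psi_k(-\tfrac{1}{2}\log\rho)$; in particular $\psi^{(k)}\ge 0$ on $[0,1]$, $\psi^{(k)}(0)=\sum_{i_1,\ldots,i_k}(\ee\,\partial_{i_1\cdots i_k}f)^2$, and $\psi(1)=\ee(f^2)$. (One may also identify $\psi(\rho)$ with $\ee(f(\bg)f(\bg'))$ for a jointly Gaussian pair of correlation $\rho$, but passing through Lemma~\ref{basic} reuses machinery already in place.) The theorem is therefore equivalent to the single assertion that $\sum_{k\ge 0}\psi^{(k)}(0)/k!$ converges and equals $\psi(1)$: subtracting the $k=0$ term $(\ee f)^2$ then yields the claimed formula, with convergence of the series as part of the conclusion.

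To prove this assertion, apply Taylor's theorem with integral remainder: for every $m\ge 1$ and $\rho_0\in[0,1]$,
\[
\psi(\rho_0)=\sum_{k=0}^{m-1}\frac{\psi^{(k)}(0)}{k!}\rho_0^k+\frac1{(m-1)!}\int_0^{\rho_0}(\rho_0-u)^{m-1}\psi^{(m)}(u)\,du,
\]
and the remainder is nonnegative because $\psi^{(m)}\ge 0$ on $[0,1]$. Taking $\rho_0=1$ bounds the partial sums by $\psi(1)=\ee(f^2)$, so the series (of nonnegative terms) converges, say to $S\le\ee(f^2)$. For the reverse inequality I would first show that $\psi$ coincides with its Taylor series on $[0,1)$: for $0\le u<y<1$, Taylor's theorem applied from $u$ to $y$ together with nonnegativity of all derivatives gives the self-improving bound $\psi^{(m)}(u)\le m!\,\psi(y)/(y-u)^m$; substituting it into the remainder and using $\int_0^{\rho_0}(\rho_0-u)^{m-1}(y-u)^{-m}\,du\le(\rho_0/y)^{m-1}\log\frac{y}{y-\rho_0}$ shows that for any fixed $\rho_0<y<1$ the remainder is $O\bigl(m\,(\rho_0/y)^{m-1}\bigr)\to 0$. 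Hence $\psi(\rho_0)=\sum_{k\ge 0}\psi^{(k)}(0)\rho_0^k/k!$ for all $\rho_0<1$; Abel's theorem then gives $\lim_{\rho_0\uparrow 1}\sum_k\psi^{(k)}(0)\rho_0^k/k!=S$, while $\psi$ is continuous at $1$ (dominated convergence, using that $f$ is bounded), so the same limit equals $\psi(1)=\ee(f^2)$, whence $S=\ee(f^2)$.

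I expect the passage to the endpoint $\rho_0=1$ to be the one genuinely delicate point. The remainder at $\rho_0=1$ cannot be controlled directly --- there is no $y>1$ against which to compare --- and the crude estimate $\psi^{(m)}(u)\le\psi^{(m)}(1)$ is useless, since $\psi^{(m)}(1)=\sum_{i_1,\ldots,i_m}(\ee\,\partial_{i_1\cdots i_m}f)^2$ can grow far faster than $m!$. Thus one must route through interior convergence plus an Abelian limit as above, or else invoke the classical fact that a function all of whose derivatives are nonnegative on $[0,R)$ equals the sum of its Taylor series there (the absolutely monotone counterpart of the Bernstein representation used for completely monotone functions in the proof of Lemma~\ref{complete}). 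Everything else --- the iteration of Lemma~\ref{basic}, the nonnegativity and monotonicity of the $\Psi_k$, the change of variables, and the elementary integral estimate --- is routine.
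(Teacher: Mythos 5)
Your proposal is correct and follows essentially the same route as the paper: Ornstein--Uhlenbeck interpolation via Lemma~\ref{basic}, the change of variables $\rho=e^{-2t}$, and identification of the Taylor coefficients of the resulting absolutely monotone function $\psi$. The only difference is cosmetic: where the paper cites the classical fact (Feller, Vol.~II, p.~223) that an absolutely monotone function on $[0,1]$ is the sum of its Taylor series, you re-derive that fact inline via the integral-remainder estimate and Abel's theorem.
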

\begin{proof}
Let $\bg'$  and $\bg''$ be i.i.d.\ copies of $\bg$, and for each $t\ge 0$, define
\[
\bg^t := \est \bg + \esst \bg', \ \  \bg^{-t} := \est \bg + \esst \bg''.
\]
Let
\[
\phi(t) := \ee(f(\bg^{-t}) f(\bg^t)). 
\]
Then by Lemma \ref{basic}, we have
\[
\phi'(t) = -2e^{-2t} \sum_i \ee(\partial_i f(\bg^{-t}) \partial_i f(\bg^t)). 
\]
For $0 < u \le 1$, define $\psi(u) = \phi(t(u))$, where $t(u) := -\frac{1}{2}\log u$. Then 
\begin{align*}
\psi'(u) &= \frac{d}{du} \ee(f(\bg^{-t(u)}) f(\bg^{t(u)}))\\
&=  -\frac{1}{2u}\phi'\biggl(-\frac{1}{2}\log u\biggr) \\
&= \sum_i \ee(\partial_i f(\bg^{-t(u)}) \partial_i f(\bg^{t(u)})).
\end{align*}
Repeating this step $k$ times shows that
\begin{align*}
\psi^{(k)}(u) = \sum_{1\le i_1,\ldots,i_k\le n} \ee(\partial_{i_1}\cdots \partial_{i_k} f (\bg^{-t(u)})\partial_{i_1}\cdots \partial_{i_k} f (\bg^{t(u)})). 
\end{align*}
As in the proof of Lemma \ref{complete}, we observe that the expectations on the right hand side are always nonnegative. We can continuously extend $\psi$ to the closed interval $[0,1]$ by defining $\psi(0) := \ee(f(\bg') f(\bg'')) = (\ee f(\bg))^2$. Then $\psi$ is a continuous function on $[0,1]$ that is $C^\infty$ in $(0,1)$ with all derivatives non-negative. Such functions are known as absolutely monotone (see Feller~\cite{feller71}, p.\ 223), and their most important property is that they can be represented as a power series $\psi(u) = \sum_{k=0}^\infty p_k u^k$, where the coefficients are non-negative and sum to $\psi(1)$. From this one can easily deduce that for any $k\ge 1$, 
\[
p_k = \lim_{u\ra 0} \frac{\psi^{(k)}(u)}{k!} = \frac{1}{k!}\sum_{1\le i_1,\ldots, i_k\le n} (\ee(\partial_{i_1}\cdots \partial_{i_k} f(\bg)))^2. 
\]
Since $p_0=\psi(0)=(\ee(f))^2$ and $\psi(1) = \ee(f^2)$, this completes the proof.
\end{proof}
A great advantage of Theorem \ref{varformula} is that we can extract lower bounds for the variance just by collecting a subset of the terms in the infinite sum. This is exactly what we do to get the following theorem. We do not actually need the theorem in its full generality (with respect to the smoothness conditions on $f$), but prove it in the general form nonetheless. 
\begin{thm}\label{varlowbd}
Suppose $f:\rr^n \ra \rr$ is an absolutely continuous function such that there is a version of its gradient $\nabla f$ that is bounded on bounded sets. Let $\bg$ be a standard Gaussian random vector in $\rr^n$, and suppose $\ee|f(\bg)|^2$ and $\ee|\nabla f(\bg)|^2$ are both finite. Then
\[
\var(f(\bg)) \ge \frac{1}{2}\sum_{i=1}^n \bigl(\ee(g_i \partial_i f(\bg))\bigr)^2 \ge \frac{1}{2n}\bigl(\ee(\bg \cdot \nabla f(\bg))\bigr)^2,
\]
where $\bx \cdot \by$ denotes the usual inner product on $\rr^n$.
\end{thm}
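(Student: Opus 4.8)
The plan is to read off the stated lower bound from the Plancherel-type identity of Theorem~\ref{varformula} by discarding all but one family of nonnegative terms, and then to remove the smoothness hypothesis by an approximation argument. First I would treat the case where $f$ is $C^\infty$ with bounded derivatives of all orders, so that Theorem~\ref{varformula} applies verbatim. Every summand on the right-hand side of that identity is a square, hence nonnegative, so I may keep only the $k=2$ terms with $i_1=i_2=i$ and conclude
\[
\var(f(\bg)) \ge \frac12 \sum_{i=1}^n \bigl(\ee(\partial_i^2 f(\bg))\bigr)^2.
\]
Then I apply Gaussian integration by parts (as recalled before Lemma~\ref{basic}) to the bounded function $\partial_i f$, which has bounded gradient: $\ee(g_i \partial_i f(\bg)) = \ee(\partial_i(\partial_i f)(\bg)) = \ee(\partial_i^2 f(\bg))$. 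Substituting gives the first inequality in the smooth case. The second inequality is then just the Cauchy--Schwarz inequality applied to the vector $(\ee(g_i\partial_i f(\bg)))_{i=1}^n$, namely $\sum_i a_i^2 \ge \frac1n(\sum_i a_i)^2$, together with $\sum_i \ee(g_i\partial_i f(\bg)) = \ee(\bg\cdot\nabla f(\bg))$.

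To handle a general $f$ as in the hypothesis I would approximate in two steps. First localize: fix a smooth cutoff $\chi_R$ with $\chi_R\equiv 1$ on the ball of radius $R$, supported in the ball of radius $2R$, and $|\nabla\chi_R|\le C/R$, and set $f_R := \chi_R f$; since $f$ is continuous this is bounded with compact support and $\nabla f_R = \chi_R\nabla f + f\nabla\chi_R$ is a bounded, compactly supported version of its gradient. Then mollify: with $\varphi_\delta$ a standard smooth compactly supported mollifier, put $f_{R,\delta} := f_R * \varphi_\delta \in C^\infty_c(\rr^n)\subseteq C^\infty_b(\rr^n)$, so the smooth case gives $\var(f_{R,\delta}(\bg)) \ge \frac12 \sum_i (\ee(g_i\partial_i f_{R,\delta}(\bg)))^2$. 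The work is to choose $R\ra\infty$ and $\delta=\delta(R)\ra 0$ so that $f_{R,\delta}\ra f$ and $\partial_i f_{R,\delta}\ra\partial_i f$ in $L^2$ of the Gaussian measure for each $i$; this is where $\ee|f(\bg)|^2<\infty$ and $\ee|\nabla f(\bg)|^2<\infty$ are used. For fixed $R$, as $\delta\ra 0$ one has $f_{R,\delta}\ra f_R$ and $\partial_i f_{R,\delta} = (\partial_i f_R)*\varphi_\delta \ra \partial_i f_R$ in $L^2(\rr^n,\mathrm{Leb})$ with uniform sup-norm bounds, so dominated convergence against the Gaussian density upgrades these to $L^2$ convergence for the Gaussian measure. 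As $R\ra\infty$, dominated convergence gives $f_R\ra f$ and $\chi_R\nabla f\ra\nabla f$ in that $L^2$, while the spurious term has $\|f\nabla\chi_R\|_{L^2} \le (C/R)\,\ee\bigl(f(\bg)^2\mathbf{1}_{|\bg|\ge R}\bigr)^{1/2}\ra 0$. A diagonal choice of $\delta(R)$ then yields the desired approximating sequence, and passing to the limit the left side converges to $\var(f(\bg))$ (since $L^2$ convergence forces convergence of $\ee(\cdot)$ and $\ee(\cdot^2)$) while each $\ee(g_i\partial_i f_{R,\delta(R)}(\bg))$ converges to $\ee(g_i\partial_i f(\bg))$ by Cauchy--Schwarz in $L^2$; as this is a finite sum over $i$, no uniformity is needed. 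The second inequality follows once more by Cauchy--Schwarz.

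The smooth case is essentially bookkeeping on Theorem~\ref{varformula} plus one integration by parts, so the main obstacle is the approximation step: one must simultaneously secure smoothness (to invoke Theorem~\ref{varformula}, which requires bounded derivatives of \emph{all} orders), boundedness and compact support (to make the mollification well-behaved), and $L^2$ convergence of both $f$ and each $\partial_i f$ against the Gaussian measure, with the truncation introducing the extra gradient term $f\nabla\chi_R$ whose control is exactly where the finiteness of $\ee|f(\bg)|^2$ enters. Each estimate is elementary, but they must be assembled with some care.
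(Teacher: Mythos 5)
Your argument is correct, and the smooth case is identical to the paper's (restrict the Plancherel sum to the diagonal $k=2$ terms, then one Gaussian integration by parts, then Cauchy--Schwarz). But your approximation step takes a genuinely different route. You work throughout with the quantity $\ee(g_i\,\partial_i f(\bg))$, so your truncation-and-mollification scheme must secure $L^2$ convergence of \emph{both} $f$ and $\nabla f$ against the Gaussian measure, including controlling the spurious $f\nabla\chi_R$ term; this works, but it is the delicate part of your write-up. The paper sidesteps this: it first uses integration by parts in the smooth case to rewrite $\ee(\partial_i^2 f(\bg)) = \ee((g_i^2-1)f(\bg))$, yielding the inequality $\var(f(\bg))\ge \frac12\sum_i (\ee((g_i^2-1)f(\bg)))^2$, which involves only $f$ and not its gradient. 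The approximation argument (first a Gaussian smoothing to reach bounded Lipschitz $f$, then a Lipschitz cutoff $f_n = f\,h(n^{-1}\cdot)$ to reach general square-integrable $f$) then needs only $L^2$-convergence of $f$ itself, with no gradient bookkeeping whatsoever. Only at the very end, as a separate one-line univariate integration by parts in the coordinate $g_i$, does the paper convert $\ee((g_i^2-1)f(\bg))$ back to $\ee(g_i\,\partial_i f(\bg))$, and this is exactly where $\ee|\nabla f(\bg)|^2<\infty$ is used. In short: you prove the same inequality, but the paper's rearrangement of the integration by parts before the approximation step decouples the approximation from the gradient entirely and makes that step much lighter; your version is sound but requires the extra care with $\partial_i f_{R,\delta}$ that you correctly flag as the main obstacle.
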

\begin{proof}
First assume that $f\in C^\infty_b$.
Theorem \ref{varformula} gives
\[
\var(f(\bg)) \ge \frac{1}{2}\sum_{i=1}^n (\ee(\partial_i^2 f(\bg)))^2. 
\]
Integration by parts gives 
\[
\ee(\partial_i^2 f(\bg)) = \ee((g_i^2 - 1) f(\bg)). 
\]
Thus, for any $C^\infty_b$ function $f$, 
\begin{equation}\label{varlow1}
\var(f(\bg)) \ge \frac{1}{2}\sum_{i=1}^n (\ee((g_i^2-1) f(\bg)))^2. 
\end{equation}
Let us now show that the above inequality holds for any bounded Lipschitz function $f$. For each $t>0$ and $\bx \in \rr^n$, define
\[
f_t(\bx) := \ee(f(\bx + t \bg)).
\]
Then we can write
\begin{align*}
f_t(\bx) &= \int_{\rr^n} f(\bx + t\by) \frac{e^{-\frac{1}{2}|\by|^2}}{(2\pi)^{n/2}} d\by\\
&= \int_{\rr^n} t^{-n} f(\bz) \frac{e^{-\frac{1}{2t^2}|\bz-\bx|^2}}{(2\pi)^{n/2}} d\bz. 
\end{align*}
Since $f$ is a bounded function, it is clear from the above representation that $f_t\in C^\infty_b$ for any $t>0$, and hence \eqref{varlow1} holds for $f_t$. Again, since $f$ is Lipschitz, 
\[
|f_t(\bx) - f(\bx) |\le Lt \ee|\bg|,
\]
where $L$ is the Lipschitz constant of $f$. This shows that we can take $t\ra 0$ and obtain \eqref{varlow1} for $f$. 

Next, we want to show \eqref{varlow1} whenever $f$ is absolutely continuous and square-integrable under the Gaussian measure, and the gradient of  $f$ is bounded on bounded sets. Take any such $f$. Let $h:\rr^n \ra [0,1]$ be a Lipschitz function that equals $1$ in the ball of radius $1$ centered at the origin, and vanishes outside the ball of radius $2$. For each $n\ge 1$, define 
\[
f_n(\bx) := f(\bx) h(n^{-1}\bx). 
\]
Then note that each $f_n$ is bounded and Lipschitz (with possibly increasing Lipschitz constants). Thus, \eqref{varlow1} holds for each $f_n$. Since $|f_n|\le |f|$ everywhere, and $f_n \ra f$ pointwise, and $f$ is square-integrable under the Gaussian measure, it follows that we can take $n\ra \infty$ and get \eqref{varlow1} for $f$. 

Finally, we wish to show that if $\nabla f$ is square-integrable under the Gaussian measure, we have
\[
\ee ((g_i^2-1)f(\bg)) = \ee(g_i\partial_i f(\bg)).
\]
(Note that $f$ is almost surely an absolutely continuous function of $g_i$ if we fix $(g_j)_{j\ne i}$. This follows from Fubini's theorem.) The above identity follows from the univariate identity 
\[
\ee(Z\phi(Z)) = \ee(\phi'(Z))
\]
that holds when $Z$ is a standard Gaussian random variable and $\phi$ is any absolutely continuous function such that $\ee|\phi(Z)|$, $\ee|Z\phi(Z)|$ and $\ee|\phi'(Z)|$ are all finite. The identity is just integration-by-parts when $\phi$ is absolutely continuous and vanishes outside a bounded set. In the general case, let $\phi_n(x)= \phi(x)h(x/n)$, where $h:\rr\ra [0,1]$ is a Lipschitz function that is $1$ on $[-1,1]$ and vanishes outside $[-2,2]$. Then the above identity holds for each $\phi_n$, and we can pass to the limit using the dominated convergence theorem. (Actually, it can be shown that the finiteness of $\ee|\phi'(Z)|$ suffices.) As a last step, we observe that by the Cauchy-Schwarz inequality,
\[
\sum_{i=1}^n (\ee(g_i \partial_i f(\bg)))^2 \ge \frac{1}{n}\biggl(\sum_{i=1}^n \ee(g_i\partial_i f(\bg))\biggr)^2. 
\]
This completes the proof.
\end{proof}

\subsection{Proof of Theorem \ref{nosuper}} Let $\bg$ be as in the previous subsection.  
Let $A$ be a finite subset of $\rr^n$. Consider the function
\[
f_\beta(\bx) := \frac{1}{\beta}\log \sum_{\by\in A} e^{\beta \by\cdot \bx}. 
\]
\begin{lmm}\label{varlmm}
For any $\beta > 0$ we have
\[
\var(f_\beta(\bg)) \ge \sup_{0\le \beta'\le \beta} \frac{{\beta'}^2}{2n}\biggl(\sum_{i=1}^n\ee \biggl(\frac{\sum_{\by\in A} y_i^2 e^{\beta' \by\cdot\bx}}{\sum_{\by\in A}e^{\beta'\by \cdot \bx}} - \biggl(\frac{\sum_{\by\in A} y_i e^{\beta' \by\cdot\bx}}{\sum_{\by\in A}e^{\beta'\by \cdot \bx}}\biggr)^2\biggr)\biggr)^2.
\]
\end{lmm}
\begin{proof}
Note that
\[
\partial_i f_\beta(\bx)  = \frac{\sum_{\by\in A} y_i e^{\beta \by\cdot \bx}}{\sum_{\by\in A} e^{\beta \by\cdot \bx}},
\]
and therefore 
\[
\bx \cdot \nabla f_\beta(\bx) = \frac{\sum_{\by\in A} (\by\cdot \bx) e^{\beta \by\cdot \bx}}{\sum_{\by\in A} e^{\beta \by\cdot \bx}} = \fpar{}{\beta} \log \sum_{\by\in A} e^{\beta \by \cdot \bx}. 
\]
Now, it is easy to verify that $\log \sum e^{\beta \by\cdot \bx}$ is a convex function of $\beta$, and hence for each $\bx$, $\bx \cdot \nabla f_\beta(\bx)$ is an increasing function of $\beta$. 
Thus, $\ee(\bg \cdot \nabla f_\beta (\bg))$ is also an increasing function of $\beta$. Moreover,
\[
\ee(\bg \cdot \nabla f_0(\bg)) = \frac{1}{|A|}\sum_{\by\in A} \ee(\by \cdot \bg) = 0, 
\]
and therefore $\ee(\bg \cdot \nabla f_\beta (\bg))\ge 0$ for all $\beta>0$. Finally note that by integration by parts,
\begin{align*}
\ee(\bg \cdot\nabla f_\beta (\bg)) &= \sum_{i=1}^n \ee(\partial_i^2 f_\beta(\bg)) \\
&= \beta \biggl(\frac{\sum_{\by\in A} y_i^2 e^{\beta \by\cdot\bx}}{\sum_{\by\in A}e^{\beta\by \cdot \bx}} - \biggl(\frac{\sum_{\by\in A} y_i e^{\beta \by\cdot\bx}}{\sum_{\by\in A}e^{\beta\by \cdot \bx}}\biggr)^2\biggr).
\end{align*}
Combined with Theorem \ref{varlowbd}, this completes the proof.
\end{proof}

We are now ready to complete the proof of Theorem \ref{nosuper}. 
Consider an undirected graph $G = (V,E)$, and the Edwards-Anderson spin glass model on $G$ as defined in Subsection \ref{ea}. Let $\smallavg{\cdot}_\beta$ denote the average with respect to the Gibbs measure at inverse temperature $\beta$. First, we will work with $\beta < \infty$.  Let $F$ be as in Theorem \ref{nosuper}. By Lemma \ref{varlmm}, with  $n = |E|$, $\bg = (g_{ij})_{(i,j)\in E}$ and $A = \{(\sigma_i\sigma_j)_{(i,j)\in E}: \bos\in \{-1,1\}^V\}$,  we get 
\[
\var F(\beta) \ge \sup_{0\le \beta'\le \beta}\frac{\beta'^2}{2|E|} \biggl(|E| - \sum_{(i,j)\in E} \ee\av{\sigma_i\sigma_j}^2_{\beta'}\biggr)^2. 
\]
Now, under the Gibbs measure at inverse temperature $\beta'$, the conditional expectation of $\sigma_i$ given the rest of the spins is $\tanh(\beta'\sum_{j\in N(i)} g_{ij}\sigma_j)$, where $N(i)$ is the neighborhood of $i$ in the graph $G$. Using this fact and the inequality $|\tanh x|\le |x|$, we get
\begin{align*}
\ee\av{\sigma_i\sigma_j}^2_{\beta'} &= \ee\bigavg{\tanh\biggl(\beta'\sum_{k\in N(i)} g_{ik}\sigma_k\biggr)\sigma_j}^2\\
&\le {\beta'}^2\ee\biggl(\sum_{k\in N(i)}|g_{ik}|\biggr)^2\\
&\le {\beta'}^2 d\sum_{k\in N(i)} \ee|g_{ik} |^2\le {\beta'}^2 d^2. 
\end{align*}
Thus, 
\[
\var F(\beta) \ge \frac{|E|}{2}\sup_{0\le \beta' \le \min\{\beta, 1/d\}} \beta'^2(1-{\beta'}^2 d^2)^2. 
\]
Taking $\beta' = \min\{\beta,1/2d\}$, we get 
\[
\var F(\beta) \ge \frac{9|E|}{32}\min\biggl\{\beta^2, \frac{1}{4d^2}\biggr\}. 
\]
Finally, to prove the lower bound for $\beta = \infty$, just note that $F(\beta) \ra F(\infty)$ almost surely, and the quantities are all bounded, so we can apply the dominated convergence theorem to get convergence of the variance. This completes the proof of Theorem \ref{nosuper}.

\subsection{Proof of Theorem \ref{nochaos}}
For $\beta < \infty$, this is just a combination of   Theorem \ref{nosuper} and Theorem \ref{supchaos}. (Note that the notations of the two theorems are related as $\ee\smallavg{\rho}_{0,t} = |E|\ee(Q_{1,2})$; also note that $\ee\smallavg{\rho}_{0,0}\le |E|$ in this case, and therefore $v \ge Cq$.) 

Next, note that as $\beta \ra \infty$, the Gibbs measure at inverse temperature $\beta$ converges weakly to the uniform distribution on the set of ground states. The same holds for the perturbed Gibbs measure.  Thus, 
\[
\lim_{\beta \ra \infty} \av{Q_{1,2}}_\beta = \av{Q_{1,2}}_\infty \ \ \text{a.s.},
\]
where $\smallavg{Q_{1,2}}_\beta$ denotes the Gibbs average at inverse temperature $\beta$. Since all quantities are bounded by $1$, we can take expectations on both sides and apply dominated convergence.

\subsection{Proof of Theorem \ref{quenchedchaos}}\label{quenchedproof}
Let $\bg = (g_{ij})_{(i,j)\in E}$, and let $\bg', \bg''$ be independent copies of $\bg$. For each $t$, let 
\[
\bg^t := \est \bg + \esst \bg', \ \ \ \bg^{-t} := \est \bg + \esst \bg''.
\]
For each $t\in \rr$, let $\bos^t$ denote a configuration drawn from the Gibbs measure defined by the disorder $\bg^t$. For $t\ne s$, we assume that $\bos^t$ and $\bos^s$ are independent given $\bg, \bg', \bg''$. Define
\[
\phi(t) := \frac{1}{|E|} \sum_{(i,j)\in E}\ee\bigl(\av{\sigma_i^t\sigma_j^t}\av{\sigma_i^{-t}\sigma_j^{-t}}). 
\]
By Lemma \ref{complete}, it follows that $\phi$ is a completely monotone function on $[0,\infty)$. Also, $\phi$ is bounded by $1$. Thus, for any $t > 0$,
\begin{equation}\label{pbd}
|\phi'(t)| \le \frac{\phi(0)-\phi(t)}{t} \le \frac{1}{t}. 
\end{equation}
Again, if we let 
\[
e_{ijkl}^t := \av{\sigma_i^t \sigma_j^t \sigma_k^t \sigma_l^t} - \av{\sigma_i^t \sigma_j^t}\av{\sigma_k^t \sigma_l^t}, 
\]
then by Lemma \ref{basic} we know that
\begin{align*}
\phi'(t) = -\frac{2e^{-2t}\beta^2}{|E|} \sum_{(i,j)\in E, \, (k,l)\in E} \ee(e_{ijkl}^t e_{ijkl}^{-t}). 
\end{align*}
Now fix $t$, and let $\bar{e}_{ijkl} := \ee(e_{ijkl}^t\mid \bg)$. Then $\ee(e_{ijkl}^t e_{ijkl}^{-t}) = \ee(\bar{e}_{ijkl}^2)$ and so by~\eqref{pbd}, we have
\begin{equation}\label{ffbd}
\sum_{(i,j)\in E, \, (k,l)\in E} \ee(\bar{e}_{ijkl}^2)\le \frac{|E|}{2te^{-2t}\beta^2}. 
\end{equation}
Now let
\[
u_{ijkl}^t := \av{\sigma_i^t \sigma_j^t \sigma_k^t \sigma_l^t}, \ \ \ v_{ijkl}^t := \av{\sigma_i^t \sigma_j^t}\av{\sigma_k^t \sigma_l^t},
\]
and define $\bar{u}_{ijkl} := \ee(u_{ijkl}^t\mid \bg)$ and $\bar{v}_{ijkl} := \ee(v_{ijkl}\mid \bg)$. Then $|\bar{u}_{ijkl}|$ and $|\bar{v}_{ijkl}|$ are both uniformly bounded by $1$, and so
\begin{align*}
\sum_{(i,j)\in E, \, (k,l)\in E} \ee(u_{ijkl}^t u_{ijkl}^{-t} - v_{ijkl}^t v_{ijkl}^{-t}) &= \sum_{(i,j)\in E, \, (k,l)\in E} \ee(\bar{u}_{ijkl}^2 - \bar{v}_{ijkl}^2) \\
&\le 2 \sum_{(i,j)\in E, \, (k,l)\in E} \ee|\bar{u}_{ijkl} - \bar{v}_{ijkl}|.  
\end{align*}
Since $\bar{u}_{ijkl} - \bar{v}_{ijkl} = \bar{e}_{ijkl}$, an application of the Cauchy-Schwarz inequality and \eqref{ffbd} to the above bound gives 
\begin{align*}
\sum_{(i,j)\in E, \, (k,l)\in E} \ee(u_{ijkl}^t u_{ijkl}^{-t} - v_{ijkl}^t v_{ijkl}^{-t}) &\le \frac{2|E|^{3/2}}{e^{-t}\beta \sqrt{2t}}. 
\end{align*}
To complete the proof, note that
\begin{align*}
\frac{1}{|E|^2}\sum_{(i,j)\in E, \, (k,l)\in E} (u_{ijkl}^t u_{ijkl}^{-t} - v_{ijkl}^t v_{ijkl}^{-t}) &=  \av{(Q_{\bos^t,\bos^{-t}} - \av{Q_{\bos^t, \bos^{-t}}})^2},
\end{align*}
where $Q_{\bos^t, \bos^{-t}}$ is the bond overlap between $\bos^t$ and $\bos^{-t}$. 

\subsection{Chaos under discrete perturbation}\label{chaosdiscproof}
Our goal in this subsection is to prove that superconcentration implies chaos under discrete perturbations. Accordingly, let us first set the stage for discrete perturbation. Henceforth, we deviate from the notation of Subsection \ref{chaosgauss}.

The result of this subsection and its proof are inspired by Lemma 2.3 in~\cite{chatterjee08a}; we follow the same notation as in \cite{chatterjee08a}. Let $\bbx = (X_1,\ldots,X_n)$ be a vector of independent  random variables with $\var(X_i) = 1$ for each $i$. Let $\bbx'$ be an independent copy of $\bbx$. For any $A\subseteq [n] := \{1,\ldots,n\}$, let $\bbx^A$ be the vector whose $i$th component is
\[
X_i^A := 
\begin{cases}
X_i' &\text{ if } i\in A,\\
X_i &\text{ if } i\not\in A.
\end{cases}
\]
Let $f:\rr^n \ra \rr$ be a twice differentiable function. Let $\partial_i f$ and $\partial_i^2 f$ be the first and second partial derivatives of $f$ in the direction of the $i$th coordinate. 
\begin{thm}\label{discgauss}
Suppose $\epsilon$ and $\delta$ are constants such that for all $i$, $|\partial_i f|\le \delta$ and $|\partial_i^2 f|\le \epsilon$ everywhere in the closed convex hull of the support of $\bbx$. Fix $0\le k\le n$, and let $A$ be a subset of~$[n]$, chosen uniformly at random from the collection of all subsets of size~$k$. Define $\bbx^A$ as above. Let $\gamma := \max_i \ee|X_i - X_i'|^3$. Then
\begin{equation}\label{discgaussineq}
\ee\biggl(\sum_{i=1}^n \partial_i f(\bbx)\partial_i f(\bbx^A)\biggr) \le \frac{n+1}{k+1}\var(f(\bbx)) + \frac{3n\delta\epsilon\gamma}{2}. 
\end{equation}
\end{thm}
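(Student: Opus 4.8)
The plan is to run a discrete ``integration by parts'' along the resampling chain, bounding $\Theta(k):=\ee\bigl(\sum_{i=1}^n\partial_i f(\bbx)\partial_i f(\bbx^A)\bigr)$ by the increments of the correlation function $\psi(j):=\ee_{|A|=j}\,\ee\bigl(f(\bbx)f(\bbx^A)\bigr)$ (the outer expectation over $\bbx,\bbx'$, the inner average over $A$ uniform of size $j$), and then exploiting convexity of $\psi$.

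First I would record two conditioning identities. For $i\notin A$, conditioning $f(\bbx)$ and $f(\bbx^A)$ on all coordinates but $X_i$ (neither depends on $X_i'$) and using independence of $X_i$ from the rest gives
\[
\ee\bigl(f(\bbx)f(\bbx^{A\cup\{i\}})\bigr)=\ee\bigl(f(\bbx)f(\bbx^A)\bigr)-\ee\bigl(\cov_i(f(\bbx),f(\bbx^A))\bigr),
\]
where $\cov_i$ denotes covariance in the single variable $X_i$ with the other coordinates frozen. Averaging over $|A|=j$ and $i\notin A$ and using $\binom nj(n-j)=\binom n{j+1}(j+1)$, this gives $\psi(j)-\psi(j+1)=\frac1{\binom nj(n-j)}\sum_{|A|=j,\;i\notin A}\ee\bigl(\cov_i(f(\bbx),f(\bbx^A))\bigr)$. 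Second, for \emph{every} $i$, since $\bbx$ and $\bbx^A$ are i.i.d.\ given the unresampled coordinates $\bbx_{A^c}$, one has $\ee\bigl(\partial_i f(\bbx)\partial_i f(\bbx^A)\bigr)=\ee\bigl(\ee(\partial_i f(\bbx)\mid\bbx_{A^c})^2\bigr)$; as conditional expectation is an $L^2$ contraction and $\sigma(\bbx_{A^c})\subseteq\sigma(\bbx_{(A\setminus\{i\})^c})$, for $i\in A$ we get the monotonicity $\ee(\partial_i f(\bbx)\partial_i f(\bbx^A))\le\ee(\partial_i f(\bbx)\partial_i f(\bbx^{A\setminus\{i\}}))$.

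The technical core is a one-variable comparison: for $i\notin A$,
\[
\ee\bigl(\cov_i(f(\bbx),f(\bbx^A))\bigr)=\ee\bigl(\partial_i f(\bbx)\partial_i f(\bbx^A)\bigr)+E_{i,A},\qquad |E_{i,A}|\le C\delta\epsilon\gamma.
\]
I would prove this by writing $\cov_i$ as $\ee\bigl((\phi_1(X_i)-\phi_1(\mu_i))(\phi_2(X_i)-\phi_2(X_i''))\bigr)$ with $\mu_i=\ee X_i$, $X_i''$ a fresh copy of $X_i$, and $\phi_1,\phi_2$ the restrictions of $f(\bbx),f(\bbx^A)$ to the $i$-th coordinate (so $|\phi_a'|\le\delta$, $|\phi_a''|\le\epsilon$); Taylor-expanding each difference to first order with a second-derivative Lagrange remainder; bounding each remainder \emph{simultaneously} by $\tfrac\epsilon2(\cdot)^2$ and by $2\delta|\cdot|$; and applying H\"older to convert the resulting mixed terms into third moments $\le\ee|X_i-X_i''|^3=\gamma_i$ (using also $\gamma_i\ge 2^{3/2}>1$, so lower powers of $\gamma_i$ are dominated). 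The surviving main term $\ee\bigl((X_i-\mu_i)^2\,\partial_i f(\bbx)\partial_i f(\bbx^A)\bigr)$ is reduced to $\ee(\partial_i f(\bbx)\partial_i f(\bbx^A))$ by freezing $X_i$ at $\mu_i$ inside both gradient factors, legitimate up to $O(\delta\epsilon\gamma_i)$ because $|\partial_i^2 f|\le\epsilon$ and the remainders obey the two-sided bound; the nuisance weight then averages to $\ee(X_i-\mu_i)^2=1$.

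Finally I would assemble. A standard orthogonal (Hoeffding) decomposition for the product measure gives $\psi(j)=\sum_B\|f_B\|^2\binom{n-|B|}{j}/\binom nj$, and each coefficient $\prod_{l=0}^{|B|-1}\frac{n-j-l}{n-l}$ is a product of positive, decreasing, affine functions of $j$, hence convex and decreasing; so the increments $d_j:=\psi(j)-\psi(j+1)\ge0$ are decreasing, $\sum_{j=0}^{n-1}d_j=\var(f(\bbx))$, and therefore $d_j\le\var(f(\bbx))/(j+1)$ and $j\,d_{j-1}\le\var(f(\bbx))$. Splitting $\Theta(k)=\frac1{\binom nk}\sum_{|A|=k}\bigl(\sum_{i\notin A}+\sum_{i\in A}\bigr)$, the first sum equals $(n-k)d_k+O(n\delta\epsilon\gamma)$ by the first conditioning identity and the one-variable comparison; the second, by the $L^2$-monotonicity followed by relabelling $B=A\setminus\{i\}$ and applying the same two tools at level $k-1$, is at most $k\,d_{k-1}+O(n\delta\epsilon\gamma)$. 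Hence $\Theta(k)\le(n-k)d_k+k\,d_{k-1}+O(n\delta\epsilon\gamma)\le\frac{n-k}{k+1}\var(f(\bbx))+\var(f(\bbx))+O(n\delta\epsilon\gamma)=\frac{n+1}{k+1}\var(f(\bbx))+O(n\delta\epsilon\gamma)$, and tightening the constants in the one-variable comparison produces the stated $\tfrac{3n\delta\epsilon\gamma}{2}$. The main obstacle is precisely that one-variable comparison: naive Taylor bounds leave second-moment errors that do not close, and the delicacy is the combined two-sided remainder estimate, H\"older, and coordinate-freezing, done carefully enough to reach the constant $\tfrac32$.
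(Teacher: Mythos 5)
Your proposal follows essentially the same route as the paper: your $\psi(j)$ and increments $d_j=\psi(j)-\psi(j+1)$ are exactly the paper's $T_k/(2n)$ (the paper works with $\Delta_i f:=f-f^{\{i\}}$ and uses the identity $\ee(\Delta_if\,\Delta_if^A)=2\ee(f\,\Delta_if^A)$, which equals twice your $\ee(\cov_i(f(\bbx),f(\bbx^A)))$); both arguments then need (i) $\sum_j d_j=\var(f)$, (ii) $d_j$ nonincreasing, hence $d_j\le\var(f)/(j+1)$, and (iii) a one-variable Taylor comparison replacing discrete differences by $\partial_if\,\partial_if^A$ with error $\le\tfrac32\delta\epsilon\gamma_i$, finishing by the same split over $i\notin A$ and $i\in A$ and the identity $(n-k)/(k+1)+1=(n+1)/(k+1)$.

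The one place your route genuinely differs is in step (ii). The paper proves $T_k$ decreasing by a conditional-Jensen argument: given $(X_j)_{j\notin A}$ and $X_i'$, $\Delta_i f$ and $\Delta_i f^A$ are conditionally i.i.d., so $\ee(\Delta_if\,\Delta_if^A)=\ee((\ee(\Delta_if\mid\cdots))^2)$, and enlarging $A$ shrinks the conditioning $\sigma$-field, so the quantity decreases. You instead use the Hoeffding decomposition $\psi(j)=\sum_B\|f_B\|^2\,\binom{n-|B|}{j}/\binom nj$ and argue convexity from the product $\prod_{l=0}^{|B|-1}\tfrac{n-j-l}{n-l}$ of "positive, decreasing, affine" functions. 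That reasoning is not quite complete as stated: the factors are only all positive for $j\le n-|B|$ (beyond that some factors become zero or negative, and for odd $|B|$ the real polynomial is eventually concave), so the "product of positive decreasing affine functions is convex" argument only covers $j\le n-|B|$. One then needs to observe separately that $\binom{n-|B|}{j}=0$ for $j>n-|B|$, and check the two transition second differences — e.g.\ $\psi_B(n-|B|-1)-2\psi_B(n-|B|)\ge0$ follows from the telescoping ratio $\psi_B(n-|B|-1)/\psi_B(n-|B|)=|B|+1\ge2$ — so the conclusion is correct but the argument needs this supplement. The conditional-Jensen route in the paper sidesteps this bookkeeping entirely.

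On step (iii), your version Taylor-expands around $\mu_i=\ee X_i$ and invokes $\gamma_i\ge2^{3/2}$ to absorb lower-order moments; the paper instead introduces a fresh copy $X_i''$, writes $\ee((X_i-X_i')^2\partial_if\,\partial_if^A)=2\ee(\partial_if\,\partial_if^A)+\text{error}$ using independence of $(X_i-X_i')^2$ from the tilde-d gradients, and bounds the error by $2\delta\epsilon\ee|X_i-X_i'|^3$ via H\"older. This arrangement reaches the constant $\tfrac32$ directly without needing $\gamma_i\ge1$, and I would recommend adopting it: your "tightening the constants" step is the one piece you left unchecked, and the paper's version makes it unnecessary.
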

The proof of Theorem \ref{discgauss} is divided into a series of lemmas. First, let us introduce some further conventions. To simplify notation, we will write $f^A$ for $f(\bbx^A)$. When $A=\emptyset$, we will simply write $f$. For any $i$ and $A$ such that $i\not \in A$, let 
\[
\Delta_i f^A :=  f^A - f^{A\cup\{i\}}. 
\]
As usual, when $A=\emptyset$, we will simply write $\Delta_i f$. Let $\ma_{k,i}$ denote the collection of all subsets of $[n]\backslash \{i\}$ of size $k$. For $0\le k\le n-1$, efine
\[
T_k := \sum_{i=1}^n \frac{1}{{n-1\choose k}}\sum_{A\in \ma_{k,i}} \ee(\Delta_i f\Delta_i f^A), 
\]
The above quantity is a discrete proxy for the left hand side in \eqref{discgaussineq}. Our first result is an exact formula for the variance in terms of $T_0,\ldots,T_{n-1}$. This is actually a restatement of Lemma 2.3 from \cite{chatterjee08a}. 
\begin{lmm}\label{disc1}
We have
\[
\var(f) = \frac{1}{2n}\sum_{k=0}^{n-1} T_k.
\]
\end{lmm}

\begin{proof}
By exchangeability of $X_i$ and $X_i'$, it is easy to see that the pair $(f, \Delta_i f^A)$ has the same distribution as the pair $(f^{\{i\}}, -\Delta_i f^A)$, and therefore
\begin{equation}\label{dirichlet}
\ee(\Delta_i f \Delta_i f^A) = \ee(f\Delta_i f^A) - \ee(f^{\{i\}} \Delta_i f^A) = 2\ee(f\Delta_i f^A). 
\end{equation}
We claim that 
\begin{equation}\label{tree}
\frac{1}{n}\sum_{i=1}^n \sum_{k=0}^{n-1}\frac{1}{{n-1 \choose k}} \sum_{A\in \ma_{k,i}} \Delta_i f^A = f - f^{[n]}. 
\end{equation}
To see this, consider any $B\subseteq [n]$ such that $B\neq \emptyset$ and $B\neq [n]$. Let $k= |B|$. On the left hand side in the above display, if we write out the definition of $\Delta_i f^A$ as $f^A - f^{A\cup\{i\}}$ and regroup terms, then the coefficient of $f^B$ in the expansion is
\[
\frac{1}{n{n-1\choose k}}(n-k) - \frac{1}{n{n-1 \choose k-1}}k = 0. 
\]
Similarly, the coefficient of $f$ is $1$ and the coefficient of $f^{[n]}$ is $-1$. This proves \eqref{tree}. Combining \eqref{tree} with \eqref{dirichlet}, we see that
\[
\var(f) = \ee(f(f-f^{[n]})) = \frac{1}{2n}\sum_{i=1}^n \sum_{k=0}^{n-1}\frac{1}{{n-1 \choose k}} \sum_{A\in \ma_{k,i}} \ee(\Delta_i f\Delta_i f^A). 
\]
This completes the proof of the lemma.
\end{proof}
Our next lemma is a monotonicity property of the $T_k$'s.
\begin{lmm}\label{disc2}
We have $T_0\ge T_1\ge \cdots \ge T_{n-1}\ge 0$. 
\end{lmm}
\begin{proof}
Take any $A$ and $i\not \in A$. It is easy to see that given $(X_j)_{j\not \in A}$ and $X_i'$, the random variables $\Delta_i f$ and $\Delta_i f^A$ are i.i.d. Therefore, 
\begin{align*}
\ee(\Delta_i f \Delta_i f^A) &= \ee((\ee(\Delta_i f\mid (X_j)_{j\not \in A}, X_i'))^2).
\end{align*}
From this and Jensen's inequality, it is clear that $\ee(\Delta_i f \Delta_i f^A) \ge 0$, and for any $A\subseteq B\subseteq [n]\backslash \{i\}$, 
\[
\ee(\Delta_i f \Delta_i f^A) \ge \ee(\Delta_i f \Delta_i f^B).
\]
Thus, if $k := |A| \le n-2$, we have
\[
\ee(\Delta_i f \Delta_i f^A) \ge \frac{1}{n-k-1}\sum\ee(\Delta_i f \Delta_i f^B),
\]
where the sum is taken over all $B$ such that $B = A\cup \{j\}$ for some $j \not \in A \cup \{i\}$. Since any $B\in \ma_{k+1,i}$ can be obtained by adding one element to $A$  for exactly $k+1$ many $A\in \ma_{k,i}$, we have
\[
\sum_{A\in \ma_{k,i}}\ee(\Delta_i f \Delta_i f^A) \ge \frac{k+1}{n-k-1}\sum_{B\in \ma_{k+1,i}}\ee(\Delta_i f \Delta_i f^B).
\]
This can be rewritten as
\[
\frac{1}{{n-1 \choose k}}\sum_{A\in \ma_{k,i}}\ee(\Delta_i f \Delta_i f^A) \ge \frac{1}{{n-1\choose k+1}}\sum_{B\in \ma_{k+1,i}}\ee(\Delta_i f \Delta_i f^B).
\]
This completes the proof of the lemma.
\end{proof}
Combining Lemma \ref{disc1} and Lemma \ref{disc2}, we easily get the following discrete version of Theorem \ref{discgauss}.
\begin{lmm}\label{disc3}
For each $0\le k\le n-1$,
\begin{equation*}
T_k \le  \frac{2n\var(f)}{k+1}.
\end{equation*}
\end{lmm}
\begin{proof}
Since $T_0\ge T_1\ge \cdots T_{n-1}\ge 0$, and 
\[
\var(f) = \frac{1}{2n}\sum_{k=0}^{n-1} T_k,
\]
it follows that for each $0\le k\le n-1$,
\begin{equation*}\label{tkbd}
T_k \le \frac{1}{k+1}\sum_{r=0}^k T_r \le \frac{2n\var(f)}{k+1}.
\end{equation*}
This completes the proof of the lemma.
\end{proof}
Finally, we are ready to prove Theorem \ref{discgauss}. This involves replacing the discrete derivatives in Lemma \ref{disc3} with continuous derivatives, and incurring a small error along the way.
\begin{proof}[Proof of Theorem \ref{discgauss}]
Since $|\partial_i f|\le \delta$ and $|\partial_i^2 f|\le \epsilon$ everywhere on the closed convex hull of the  support of $\bbx$, by Taylor expansion we have
\[
|\Delta_i f^A|\le |X_i - X_i'|\delta, \ \ |\Delta_i f^A - (X_i - X_i')\partial_i f^A| \le \frac{\epsilon}{2}(X_i - X_i')^2. 
\]
Thus,
\begin{equation}\label{discrete}
\begin{split}
&|\ee(\Delta_i f\Delta_i f^A) - \ee((X_i - X_i')^2\partial_i f\partial_i f^A)| \\
&\le |\ee((\Delta_i f - (X_i - X_i') \partial_i f) \Delta_i f^A)| \\
&\qquad + |\ee((X_i - X_i')\partial_i f (\Delta_i f^A - (X_i - X_i')\partial_i f^A))|\\
&\le \delta\epsilon\ee|X_i - X_i'|^3.
\end{split}
\end{equation}
Now let $X_i''$ be another independent copy of $X_i$, that is also independent of~$X_i'$. Let $\widetilde{\partial_i f}$ denote $\partial_i f$ with $X_i$ replaced by $X_i''$ and define $\widetilde{\partial_i f}^A$ similarly. Since $\var(X_i) = 1$ and $(X_i-X_i')^2$ is independent of $\widetilde{\partial_i f}\widetilde{\partial_i f}^A$, we have 
\[
\ee((X_i - X_i')^2 \widetilde{\partial_i f}\widetilde{\partial_i f}^A) = 2\;\ee(\widetilde{\partial_i f}\widetilde{\partial_i f}^A) = 2\; \ee(\partial_i f\partial_i f^A).
\]
Again,
\begin{align*}
|\partial_i f \partial_i f^A - \widetilde{\partial_i f}\widetilde{\partial_i f}^A| &\le 2\delta\epsilon |X_i - X_i''|. 
\end{align*}
Combining the last two observations, we get
\begin{align*}
|\ee((X_i - X_i')^2 \partial_i f \partial_i f^A) - 2\;\ee(\partial_i f\partial_i f^A)| &\le 2\delta \epsilon \ee((X_i- X_i')^2 |X_i - X_i''|) \\
&\le 2\delta \epsilon \ee|X_i - X_i'|^3. 
\end{align*}
And now, combining the above bound with \eqref{discrete}, we have
\begin{equation}\label{mainbd}
2\; \ee(\partial_i f \partial_i f^A) \le \ee(\Delta_i f \Delta_i f^A) + 3\delta \epsilon\ee|X_i - X_i'|^3. 
\end{equation}
We also have to consider the case when $i\in A$. Let $B = A\backslash \{i\}$. Then by Jensen's inequality we have
\begin{equation}\label{include}
\begin{split}
\ee(\partial_i f\partial_i f^A) &= \ee((\ee(\partial_i f\mid (X_j)_{j\not \in A}))^2)\\
&\le \ee((\ee(\partial_i f\mid (X_j)_{j\not \in B}))^2) = \ee(\partial_i f \partial_i f^B). 
\end{split}
\end{equation}
Now take $1\le k\le n-1$ and let $\ma_k$ denote the set of all subsets of $[n]$ of size $k$. Using \eqref{mainbd} and \eqref{include}, we get  
\begin{align*}
\sum_{i=1}^n \sum_{A\in \ma_k} \ee(\partial_i f \partial_i f^A) &= \sum_{i=1}^n \biggl(\sum_{A\in \ma_{k,i}} \ee(\partial_i f \partial_i f^A) +  \sum_{A\in \ma_{k-1,i}} \ee(\partial_i f \partial_i f^{A\cup \{i\}})\biggr)\\
&\le \sum_{i=1}^n \biggl(\sum_{A\in \ma_{k,i}} \ee(\partial_i f \partial_i f^A) +  \sum_{A\in \ma_{k-1,i}} \ee(\partial_i f \partial_i f^{A})\biggr)\\
&\le \frac{1}{2}{n-1 \choose k}T_k + \frac{1}{2}{n-1 \choose k-1} T_{k-1} + \frac{n}{2}{n\choose k} 3\delta\epsilon \gamma. 
\end{align*}
From this and Lemma \ref{disc3}, we conclude that for $1\le k\le n-1$, 
\begin{align*}
\frac{1}{{n\choose k}}\sum_{i=1}^n \sum_{A\in \ma_k} \ee(\partial_i f \partial_i f^A) &\le \frac{n-k}{2n}T_k + \frac{k}{2n} T_{k-1} + \frac{3n\delta \epsilon\gamma}{2}  \\
&\le \frac{n+1}{k+1} \var(f) + \frac{3n\delta \epsilon\gamma}{2}.
\end{align*}
The same conclusion can be drawn for $k=0$ and $k=n$ by defining $T_{-1}=T_n = 0$ and verifying that all steps hold. This completes the proof. 
\end{proof}
\subsection{Proof of Theorem \ref{chaosdisc}} 
Consider the S-K Hamiltonian $H_N$ defined in \eqref{skhamil} as a function of the disorder $\bg = (g_{ij})_{1\le i,j\le N}$. Fix $\beta$, and define $f = N^{-1/2}F_N(\beta)$, where $F_N(\beta)$ is the free energy defined in \eqref{free}. Let $\bg'$ be an independent copy of $\bg$, and define $\bg^A$ as we defined $\bbx^A$ in Theorem~\ref{discgauss}. Let $k = pN$ (and assume that $k$ is an integer), and define a perturbed Hamiltonian using the disorder $\bg^A$, where $A$ is chosen uniformly at random from the set of all subsets of $\{(i,j)\}_{1\le i,j\le N}$ of size $k$. 

Let $\bos^1$ be sampled from the original Gibbs measure, and $\bos^2$ from the perturbed Gibbs measure. An easy verification shows that
\[
\sum_{i,j} \partial_{ij} f(\bg) \partial_{ij}f(\bg^A) = \av{R_{1,2}^2},
\]
where $\partial_{ij}f$ is the derivative of $f$ with respect to the $(i,j)$th coordinate. On the other hand, by Theorem \ref{superconc} we know that
\[
\var f(\bg) \le \frac{C\log(2+C \beta)}{\log N}. 
\]
Finally, note that for any $(i,j)$, 
\[
\partial_{ij} f = \frac{\av{\sigma_i\sigma_j}}{N}, \ \ \partial_{ij}^2 f = \frac{\beta(1- \av{\sigma_i \sigma_j}^2)}{N^{3/2}}. 
\]
Therefore, we can take $\delta = N^{-1}$ and $\epsilon = \beta N^{-3/2}$ while applying Theorem~\ref{discgauss}. Using all the above information, we can now apply Theorem \ref{discgauss} to conclude that
\[
\ee\av{R_{1,2}^2} \le \frac{C\log(2+C \beta)}{p\log N} + C\beta N^{-1/2},
\]
where $C$ is an absolute constant. Since $p\in (0,1)$, we can ignore the second term on the right after replacing $\log(2+C \beta)$ by $C(1+\beta)$ in the first term. This completes the proof.

\subsection{Sharpness of Theorem \ref{main} for the REM}\label{rem}

The Random Energy Model (REM), introduced by Derrida \cite{derrida80, derrida81}, is possibly the simplest model of a spin glass. The state space is $\{-1,1\}^N$ as usual, but here the energies of states $\{-H_N(\bos)\}_{\bos\in \{-1,1\}^N}$ are chosen to be i.i.d.\ Gaussian random variables with mean zero and variance $N$. We show that Theorem~\ref{main} gives a  sharp result in the low temperature regime ($\beta > 2\sqrt{\log 2}$) of this model. We follow the notation of Theorem \ref{main}. 
\begin{prop}
Suppose $\bos^1$ is drawn from the original Gibbs measure of the REM and $\bos^2$ from the Gibbs measure perturbed continuously up to time~$t$, in the sense of Subsection \ref{chaossec}. If $\beta > 2\sqrt{\log 2}$, there are positive constants $C(\beta)$ and $c(\beta)$ depending only on $\beta$ such that for all $N$ and $t$,
\[
c(\beta) e^{-C(\beta)N\min\{1, t\}}\le \ee\av{1_{\{\bos^1 = \bos^2\}}}_{0,t}\le C(\beta) e^{-c(\beta)N\min\{1, t\}}.
\] 
\end{prop}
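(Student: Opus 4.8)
The plan is to read the upper bound straight off Theorem~\ref{main} and to obtain the lower bound from the completely monotone structure of $t\mapsto\ee\av{1_{\{\bos^1=\bos^2\}}}_{0,t}$ together with a single model-specific fact: the low-temperature condensation of the REM. In the notation of Subsection~\ref{chaosgauss} the relevant field is $\bbx=(X_{\bos})_{\bos\in\{-1,1\}^N}$ with the $X_{\bos}$ i.i.d.\ $N(0,N)$, so $\rho_{\bos\bos'}=N\,1_{\{\bos=\bos'\}}\ge 0$ and Theorem~\ref{main} applies; by exchangeability $\nu_{\bos}=2^{-N}$ for each $\bos$. Taking the power series $x\mapsto x/N$ (coefficients $0,\tfrac1N,0,0,\dots$, all nonnegative), we get $\phi\circ\rho=1_{\{\bos=\bos'\}}$, so, writing $\phi(t):=\ee\av{1_{\{\bos^1=\bos^2\}}}_{0,t}$, the quantity to estimate is exactly $\ee\av{\phi\circ\rho}_{0,t}$.

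\emph{Upper bound.} Since $\rho_{\bos\bos'}\in\{0,N\}$ we have $\sum_{\bos,\bos'}\phi(\rho_{\bos\bos'})\,e^{2\beta^2e^{-s}\rho_{\bos\bos'}}\nu_{\bos}\nu_{\bos'}=2^{-N}e^{2\beta^2e^{-s}N}$, and $\ee\av{\phi\circ\rho}_{0,0}\le 1$, so Theorem~\ref{main} gives $\phi(t)\le(2^{-N}e^{2\beta^2e^{-s}N})^{t/s}$ for every $s\ge t$. Fix $s_\ast=s_\ast(\beta)\ge 1$ with $2\beta^2e^{-s_\ast}\le\tfrac14\log 2$, so $2^{-N}e^{2\beta^2e^{-s_\ast}N}\le 2^{-3N/4}$; then $s=s_\ast$ handles $t\le s_\ast$ (giving $\phi(t)\le 2^{-3Nt/(4s_\ast)}$) and monotonicity of $\ee\av{\phi\circ\rho}_{0,t}$ in $t$ (last line of Theorem~\ref{main}) handles $t>s_\ast$ (giving $\phi(t)\le 2^{-3N/4}$). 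Together these yield $\phi(t)\le e^{-c(\beta)N\min\{1,t\}}$ with $c(\beta)=3(\log 2)/(4s_\ast)$.

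\emph{Lower bound.} Since $\Phi:=(\phi(\rho_{\bos\bos'}))$ is the identity matrix, $s\mapsto\ee\av{\phi\circ\rho}_{-s,s}=\ee\sum_{\bos}p_{\bos}(\bbx^{-s})p_{\bos}(\bbx^s)$ (with $p_{\bos}(\bx)=e^{\beta x_{\bos}}/\sum_{\bos'}e^{\beta x_{\bos'}}$) lies in the class $\mf$ of Lemma~\ref{complete}, so $\phi(t)=\phi(0)\,\ee(e^{-tU})$ for a nonnegative random variable $U$. I would then use two inputs. (i) $\ee(U)$ is of order at most $\beta^2N$: by Lemma~\ref{basic} the $s$-derivative of $\ee\av{\phi\circ\rho}_{-s,s}$ equals $-2e^{-2s}N\sum_{\bos,\bos'}\ee(\partial_{\bos'}p_{\bos}(\bbx^{-s})\,\partial_{\bos'}p_{\bos}(\bbx^s))$, and from $\partial_{\bos'}p_{\bos}=\beta(p_{\bos}\delta_{\bos\bos'}-p_{\bos}p_{\bos'})$ one gets $\sum_{\bos,\bos'}(\partial_{\bos'}p_{\bos})^2\le 2\beta^2\sum_{\bos}p_{\bos}^2\le 2\beta^2$, so by Cauchy--Schwarz and the fact that $\bbx^{-s}$ and $\bbx^s$ are each distributed as $\bbx$, this derivative has absolute value $\le 4\beta^2N$ uniformly in $s$; hence $|\phi'|\le 2\beta^2N$ on $(0,\infty)$ and $\ee(U)\le 2\beta^2N/\phi(0)$. (ii) $\phi(0)=\ee\sum_{\bos}G_N(\{\bos\})^2\ge c_1(\beta)>0$ when $\beta>2\sqrt{\log 2}$: this is the frozen-phase condensation of the REM, obtainable either from the convergence of the extremal process of the $2^N$ i.i.d.\ Gaussians to a Poisson point process, or more self-containedly by bounding $\ee(Z_N(\beta)e^{-\beta M_N})$ (with $M_N=\max_{\bos}X_{\bos}$) uniformly in $N$ and applying Markov's inequality to $Z_N(\beta)e^{-\beta M_N}$, which makes the top state carry Gibbs mass $\ge\tfrac1L$ with probability $\ge\tfrac12$. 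Given (i) and (ii), put $K(\beta):=4\beta^2/c_1(\beta)$; then $\ee(U)\le\tfrac12K(\beta)N$, so $\pp(U\le K(\beta)N)\ge\tfrac12$ and $\phi(t)\ge\phi(0)e^{-K(\beta)Nt}\pp(U\le K(\beta)N)\ge\tfrac12c_1(\beta)e^{-K(\beta)Nt}$, the desired bound for $t\le 1$. For $t\ge 1$ I would instead use that $\phi$ is decreasing with $\lim_{t\to\infty}\phi(t)=\sum_{\bos}(\ee\,p_{\bos}(\bbx))^2=2^{-N}$ (the two Gibbs measures decouple as $t\to\infty$ and $\ee\,p_{\bos}(\bbx)=\nu_{\bos}=2^{-N}$), so $\phi(t)\ge 2^{-N}=e^{-(\log 2)N}$. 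Enlarging $C(\beta)$ past $\max\{K(\beta),\log 2\}$ and shrinking the lower constant below $\tfrac12c_1(\beta)$ gives $\phi(t)\ge c(\beta)e^{-C(\beta)N\min\{1,t\}}$ for all $t$; a final renaming so that one pair $c(\beta)\le C(\beta)$ serves both inequalities completes the argument.

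\emph{Main obstacle.} Everything above is soft --- Theorem~\ref{main}, Lemma~\ref{complete}, a one-line derivative estimate, and Markov's inequality --- except the lower bound $\phi(0)\ge c_1(\beta)>0$, which is where genuine low-temperature REM analysis, and the hypothesis $\beta>2\sqrt{\log 2}$ (placing the model strictly in its frozen phase), actually enter. I expect the accompanying first-moment estimate that $\ee(Z_N(\beta)e^{-\beta M_N})$ is bounded uniformly in $N$ above criticality to be the only other mildly delicate point.
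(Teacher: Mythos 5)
Your upper bound is essentially the paper's (Theorem~\ref{main} with $\phi(x)=x/N$, a fixed $s_*=s_*(\beta)$ making $2\beta^2e^{-s_*}$ small relative to $\log 2$, then monotonicity for $t>s_*$ where the paper instead plugs $s=t$ into the infimum --- a cosmetic difference). The lower bound is a genuinely different route. The paper combines Theorem~\ref{supchaos} with a two-sided bound $c(\beta)\le\var F_N(\beta)\le C(\beta)$, the lower half coming from the Poisson convergence of the REM extremal process. You bypass the free-energy variance entirely: from Lemma~\ref{complete} you write $\psi(t)=\psi(0)\ee(e^{-tU})$, bound $\ee(U)$ by a one-line uniform estimate $|\psi'|\le 2\beta^2 N$ (using $\sum_{\bos,\bos'}(\partial_{\bos'}p_\bos)^2\le 2\beta^2$ and Cauchy--Schwarz), prove $\psi(0)=\ee\sum_\bos G_N(\{\bos\})^2\ge c_1(\beta)$ from frozen-phase condensation, and finish with Markov. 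Both routes rest on the same nontrivial REM input --- condensation above $\beta=2\sqrt{\log 2}$ --- but package it differently (as a variance lower bound versus a $\psi(0)$ lower bound). Your packaging is actually tighter in the small-$t$ regime: Theorem~\ref{supchaos} together with $\var F_N\ge c(\beta)$ only gives $\psi(t)\ge\tfrac{c(\beta)}{2N}e^{-2Nt/c(\beta)}$, with a spurious $1/N$ that cannot be absorbed for $t\lesssim 1/N$ and does not yield $\psi(0)\ge c(\beta)$ on its own (the identity $\var F_N=N\psi(0)\ee((1+U)^{-1})$ controls only the product, not the factor $\psi(0)$); your direct bound on $\psi(0)$ plugs exactly this hole. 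The one place you quite rightly flag as needing to be written out is the uniform-in-$N$ estimate $\ee(Z_N(\beta)e^{-\beta M_N})\le L(\beta)$ for $\beta>2\sqrt{\log 2}$; as stated your sketch is otherwise complete and correct.
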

\begin{proof}
In the  notation of Theorem \ref{main}, we have $\rho_{\bos\bos'} = 0$ if $\bos \ne \bos'$, and $\rho_{\bos\bos'} = N$ if $\bos = \bos'$. Also, clearly, $\nu_\bos = 2^{-N}$ for each $\bos$. Suppose $\bos^1$ is drawn from the original Gibbs measure and $\bos^2$ from the Gibbs measure perturbed continuously up to time~$t$. Taking $\phi(x) = x/N$ in Theorem \ref{main}, we get
\begin{align*}
\ee\av{1_{\{\bos^1 = \bos^2\}}}_{0,t} &\le \inf_{s\ge t} \bigl(2^{-N} e^{2\beta^2 e^{-s} N}\bigr)^{t/s}.
\end{align*}
Now choose $s$ so large that $2\beta^2 e^{-s} \le \frac{1}{2}\log 2$. The above inequality shows that for $t\le s$,
\begin{align}\label{upbd1}
\ee\av{1_{\{\bos^1 = \bos^2\}}}_{0,t} &\le 2^{-Nt/2s},
\end{align}
and for $t> s$,
\begin{align}\label{upbd2}
\ee\av{1_{\{\bos^1 = \bos^2\}}}_{0,t} &\le 2^{-N} e^{2\beta^2 e^{-t} N}
\end{align}
A simple computation via Theorem \ref{supergauss} now gives
\[
\var( F_N(\beta) )\le C(\beta),
\]
where $C(\beta)$ is a constant depending only on $\beta$. 

Now suppose $\beta > 2\sqrt{\log 2}$. Let $H_N'(\bos) = H_N(\bos) + Na_N$, where $a_N$ solves
\[
Na_N^2 = \log \biggl(\frac{2^N}{\sqrt{N}}\biggr). 
\]
Let $(w^N_\alpha)_{1\le \alpha \le 2^N}$ denote the numbers $\exp(-\beta H_N'(\bos))$ when enumerated in non-increasing order. It follows from arguments in Section 1.2 of Talagrand~\cite{talagrand03} that this point process converges in distribution, as $N\ra \infty$, to a Poisson point process $(w_\alpha)_{\alpha \ge 1}$ with intensity $x^{-m-1}$ on $[0,\infty)$, where $m = 2\sqrt{\log 2}/\beta$. It is not difficult to extend this argument to show that 
\[
\lim_{N\ra \infty}\var\biggl(\log \sum_{\alpha =1}^{2^N} w_\alpha^N\biggr) = \var\biggl(\log \sum_{\alpha=1}^\infty w_\alpha\biggr)  > 0.
\]
We skip the details, which are somewhat tedious. (Here $\beta > 2\sqrt{\log 2}$ is required to ensure that the infinite sum $\sum_1^\infty w_\alpha$ converges almost surely.) 

However, $\var(\log \sum w_\alpha^N) = \var (\beta F_N(\beta))$. Thus, there is a positive constant $c(\beta)$ depending only on $\beta$ such that for any $N$, 
\[
\var(F_N(\beta)) \ge c(\beta).
\]
We can now use Theorem \ref{supchaos} to prove that for some positive constant $c(\beta)$ depending only on $\beta$, we have that for any $N$ and $t$,
\begin{align}\label{upbd3}
\ee\av{1_{\{\bos^1 = \bos^2\}}}_{0,t} &\ge c(\beta)e^{-Nt/c(\beta)}. 
\end{align}
However, we also have by Theorem \ref{main} that $\ee\av{1_{\{\bos^1 = \bos^2\}}}_{0,t}$ is a decreasing function of $t$, and hence
\[
\ee\av{1_{\{\bos^1 = \bos^2\}}}_{0,t} \ge \ee\av{1_{\{\bos^1 = \bos^2\}}}_{0,\infty} = 2^{-N}. 
\]
Combined with \eqref{upbd1}, \eqref{upbd2} and \eqref{upbd3}, this completes the proof.
\end{proof}

\vskip.2in
\noindent{\bf Acknowledgments.} The author thanks Michel Talagrand, Persi Diaconis, Daniel Fisher, Victor P\'erez-Abreu, Christian Houdr\'e, Michel Ledoux, Rongfeng Sun,  Tonci Antunovic  and Partha Dey for helpful discussions and comments, and Itai Benjamini for asking the question that led to Theorem~\ref{chaosdisc}.

\end{document}